\documentclass[11pt]{amsart}
\usepackage{amsfonts}
\usepackage{amsmath,amssymb,amsthm}\allowdisplaybreaks[4]
\usepackage{url}
\usepackage{graphicx}   % Optional: allows for the easy inclusion of pictures.
\usepackage{verbatim}       %Package used for the comment environment%
\usepackage{layout}
\usepackage{galois}
\usepackage{mathrsfs}
\usepackage{bbm}
\usepackage{color}
\usepackage{enumitem}
\setlist[enumerate,1]{label=(\Roman*)}
\usepackage{cite}
\numberwithin{equation}{section}
\usepackage[linkcolor=red,citecolor=blue]{hyperref}

\textwidth=5.8in \textheight=8.4in \setlength{\oddsidemargin}{0.2in}
\setlength{\evensidemargin}{0.2in}

\newtheorem{theorem}{Theorem}[section]
\newtheorem{lemma}[theorem]{Lemma}
\newtheorem{proposition}[theorem]{Proposition}
\newtheorem{remark}[theorem]{Remark}

\newtheorem{corollary}[theorem]{Corollary}

\newtheorem{example}[theorem]{Example}
\newtheorem{condition}[theorem]{Condition}

\def\[{\left[}\def\]{\right]}
\def\<{\left<}\def\>{\right>}
\def\({\left(}\def\){\right)}

 \def\beqlb{\begin{eqnarray}}\def\eeqlb{\end{eqnarray}}
 \def\beqnn{\begin{eqnarray*}}\def\eeqnn{\end{eqnarray*}}
 
\def\ar{\!\!&}

\def\al{{\alpha}}\def\be{{\beta}}
\def\ep{{\epsilon}}\def\ga{{\gamma}}
\def\la{{\lambda}}

\def\La{{\Lambda}}

\begin{document}
\title{Exact modulus of continuities for  $\Lambda$-Fleming-Viot processes with Brownian spatial motion}
%\thanks{The research of H. Liu was %supported in part by NSF of Hebei Province A2019205299, Hebei Education Department QN2019073, NSFC 11501164 and %HNU L2019Z01; the research of X. Zhou  was supported in part by ...}
\author{Huili Liu and Xiaowen Zhou}
\address{Huili Liu: School of Mathematical Sciences, Hebei Normal University,
Shijiazhuang, Hebei, China}
\email{liuhuili@hebtu.edu.cn}
\address{Xiaowen Zhou: Department of Mathematics and Statistics, Concordia University,
1455 de Maisonneuve Blvd. West, Montreal, Quebec, H3G 1M8, Canada}
\email{xiaowen.zhou@concordia.ca }
\subjclass[2010] {Primary 60J68, 60G17; Secondary 60G57, 60J95}
\date{\today}
\keywords{$\Lambda$-Fleming-Viot process; $\Lambda$-coalescent; Exact modulus of continuity; Lookdown construction}

\begin{abstract}
For a class of $\Lambda$-Fleming-Viot processes with Brownian spatial  motion in $\mathbb{R}^d$ whose associated $\Lambda$-coalescents come down from infinity, we obtain  sharp global and local  modulus of continuities for the ancestry processes recovered from the lookdown representations. As applications, we prove both global and local modulus of continuities for the $\Lambda$-Fleming-Viot support processes. In particular, if the $\Lambda$-coalescent is the Beta$(2-\beta,\beta)$ coalescent for $\beta\in(1,2]$  with $\beta=2$ corresponding to Kingman's coalescent, then for  $h(t)=\sqrt{t\log (1/t)}$, the global modulus of continuity holds for the support process with modulus function $\sqrt{2\beta/(\beta-1)}h(t)$, and both the left and right local modulus of continuities hold for the support process with modulus function $\sqrt{2/(\beta-1)}h(t)$.

\end{abstract}
\maketitle \pagestyle{myheadings} \markboth{\textsc{Modulus of continuities for $\Lambda$-Fleming-Viot processes}} {\textsc{H. Liu and X. Zhou}}
%%%%%%%%%%%%%%%%%%%%%%%%%%%%%%%%%%%%%%%%%%%%%%%%%%%%%%%%%%%%%%%%%%%%%%%%%%%%%%%%%Section
\section{Introduction}
Fleming-Viot processes are probability-measure-valued Markov processes that can arise as scaling limits of well known population genetic models such as the generalized Cannings model and  Moran model that describe the evolution of relative frequencies for different types of alleles in a large population. The associated discrete population may undergo resampling (reproduction) together with possible mutation, selection and
recombination; see Ethier and Kurtz \cite{EtKu93} and Etheridge \cite{Eth12} for surveys on Fleming-Viot process and related  population genetic models. We also refer to Birkner and Blath \cite{BB} for a nice illustration and review on relations between the measure-valued process and the approximating discrete state population genetic models.

The mutation in a Fleming-Viot process  can  also  be interpreted as the underlying spatial motion.
The resampling mechanism is associated to a coalescent. In particular, the Kingman-Fleming-Viot process is dual to Kingman's coalescent with binary collisions. The $\La$-Fleming-Viot process is dual to the more general $\La$-coalescent with multiple collisions, which allows a more general reproduction mechanism during the resampling event for the corresponding discrete model. More precisely, the $\La$-Fleming-Viot process corresponds to a discrete population in which an individual can occasionally give birth to a large number of children that is comparable to the population size. In this paper, we are interested in a selection neutral $\Lambda$-Fleming-Viot process that only involves resampling and $d$-dimensional Brownian spatial motion.

The Fleming-Viot process is an important example of the so called measure-valued superprocess. Another important example of the superprocess is  the Dawson-Watanabe superprocess that describes the evolution of a spatially distributed continuous-state branching process and arises as
the high density limit of empirical measures for branching Brownian motions.  We refer to Dawson \cite{Dawson93}, Perkins \cite{perkins} and Li \cite{Li} for comprehensive reviews on Dawson-Watanabe superprocesses and related topics. In particular, the Fleming-Viot process can be identified as a time-changed and renormalized Dawson-Watanabe superprocess conditioning on nonextinction; see  \cite[Chapter 4]{Eth12} for detailed discussions.

The support properties of  Dawson-Watanabe superprocesses have been well studied. For example, Dawson et al. \cite{DIP} investigated the sample path properties of super-Brownian motion, a superprocess with Brownian spatial motion and
binary branching mechanism, and proved a sharp modulus of continuity result for the associated historical process, which leads to a uniform one-sided modulus of continuity for the  support process. Also see Perkins \cite[Section III.1]{perkins} for a proof of modulus of continuity for the  support of super-Brownian motion without using nonstandard analysis.
Exact Hausdorff functions are also found for  the range and the closed support. Dawson and Vinogradov \cite{Da94} generalized the result in \cite{DIP} and  obtained both global and local (at fixed time)  modulus of continuities for the closed support of  superprocess with Brownian spatial motion and stable branching mechanism.
We also refer to Dawson \cite{Dawson93} and references therein for a collection of these results. Proofs of the above results  involve the branching particle system approximation, the historical cluster representation, the Palm distribution for the canonical measure,  estimates obtained from PDE associated with the Laplace functional for the superprocess and nonstandard analysis techniques. In particular, when the initial measure is the  Dirac mass at the origin, using nonstandard analysis Tribe \cite{Tri89} established the exact right modulus of continuity at time $0$ for the support of super-Brownian motion.  This was generalized in \cite[Theorem 3.1]{Da94} to a similar result for  superprocess with stable branching. In another generalization of Tribe's result, using the Brownian snake representation, Dhersin and Le Gall \cite{DL98} proved a Kolmogorov's test on the local modulus of continuity at time $0$.

Although the Fleming-Viot processes are closely related to the Dawson-Watanabe superprocesses,
due to the lack of infinite divisibility, many methods for the study of  Dawson-Watanabe superprocesses are not available for the treatment of Fleming-Viot processes and consequently, there are many fewer results concerning the path properties for Fleming-Viot processes.
Some  early results on the compact support property and the Hausdorff dimension of the support for the Kingman-Fleming-Viot process were shown by Dawson and Hochberg \cite{Dawson} and by Reimers \cite{Rei}.

To keep track of the genealogy for the Fleming-Viot processes, Donnelly and Kurtz \cite{DK96,DK99a,DK99b} proposed several versions of lookdown representation
to the Kingman and the generalized Fleming-Viot processes. These representations turned out to be very useful to the study of Fleming-Viot processes. More precisely, the lookdown representation offers  a discrete representation which involves an  exchangeable system of countably many coupled particles driven by a Poisson point process such that the empirical measures of the particles converge to the corresponding measure-valued process. Looking backwards in time, an ancestry process can be recovered from the lookdown representation to keep track of genealogy of the particles. The lookdown representation also implies the duality between the Fleming-Viot process and its associated coalescent. More details are provided in Section \ref{sec:lookdown} below. The lookdown representation for the $\Lambda$-Fleming-Viot process is extended  in Birkner et al. \cite{MJM} to the more general $\Xi$-Fleming-Viot process that involves simultaneous multiple collisions. Another elegant representation of the mutationless $\Lambda$-Fleming-Viot process using a flow of bridges can be found in Bertoin and Le Gall \cite{BerLeGa03}. We refer to Etheridge and  Kurtz \cite{EK19} for more recent work on particle representations of population models such as the spatial $\Lambda$-Fleming-Viot process.

Birkner and Blath \cite{BB} pointed out that the $\Lambda$-Fleming-Viot process does not have compact
support if the associated $\Lambda$-coalescent does not come down from infinity. Applying the lookdown representation,  for a class of $\Lambda$-Fleming-Viot processes that come down from infinity  Liu and Zhou \cite{LZ1,LZ2} showed  the compact support property and a one-sided modulus of continuity for the support  at each fixed time, and found the bounds of Hausdorff dimensions on the support and range. Note that the compact support property fails if the spatial motion allows jumps. In particular, for a class of $\Lambda$-Fleming-Viot processes with L\'evy spatial motion whose associated $\Lambda$-coalescents come down from infinity, Hughes and Zhou \cite{HZ22}  proved that instantaneous support propagation occurs in the sense that starting with any initial measure, the Fleming-Viot support can reach arbitrarily far away at any positive time. In all these works the lookdown representation serves as an efficient substitute for the historical process of superprocess.

In this paper we are  interested in improving the existing modulus of continuity in \cite{LZ2} for the $\Lambda$-Fleming-Viot process. Given $h(x)=\sqrt{x\log\(1/x\)}$ for  $x>0$, it is well known that $\sqrt{2}h(x)$ is the exact modulus of continuity function for almost all Brownian paths. Dawson et al. \cite{DIP} showed that ${2}h(x)$ is the exact modulus of continuity of super-Brownian motion. A natural question is whether there exists an exact modulus of continuity for the Fleming-Viot process.

The goal of this paper is to establish sharp global and local modulus of continuities for the ancestry processes recovered from the lookdown  representation of a class of $\Lambda$-Fleming-Viot processes with Brownian spatial motion and with the associated $\Lambda$-coalescents coming down from infinity, which lead to some improved modulus of continuities for the Fleming-Viot support processes.
To show the upper bounds for the global modulus of continuity for the ancestry process, we modify the approach in \cite{LZ2} that relies on the lookdown representation and adopt the scheme of partitioning the time interval in \cite{DIP} that is an adaption of Paul L\'evy's idea for showing modulus of continuity for Brownian motion. A more precise lower bound for the number of ancestors in the ancestry process  is found by applying an estimate on the dual $\Lambda$-coalescent in  Berestycki et al. \cite{BBL}, which
facilitates a derivation of the lower bounds for the global modulus of continuity.
The coincidence of the upper and lower bounds results in an exact global modulus of continuity.
Adjusting this approach we also obtain sharp local left and right modulus of continuities for the ancestry process, respectively. Those sufficient conditions on the $\Lambda$-coalescent are rather general.
%The global and local modulus of continuities for the ancestry process lead to the associated global and local modulus of continuities for the %$\Lambda$-Fleming-Viot support process.

To complete the introduction, we provide an outline for the rest of the paper as follows. In Section \ref{sec:coa}, we introduce
the $\La$-coalescent, its coming down from infinity property and some conditions describing its behaviors.
In Section \ref{sec:lookdown}, we briefly discuss the lookdown representation for the $\La$-Fleming-Viot process with Brownian spatial motion, and then recover the associated $\Lambda$-coalescent and ancestry process from the lookdown representation. In Section \ref{sec:theorems}, we present the main results concerning the global and local modulus of continuities for the ancestry process and the support process. Section \ref{sec:result} contains  some estimations, proofs and applications.

%%%%%%%%%%%%%%%%%%%%%%%%%%%%%%%%%%%%%%%%%%%%%%%%%%%%%%%%%%%%%%%%%%%%%%
%%%%%%%%%%%%%%%%%%%%%%%%%%%%%%%%%%%%%%%%%%%%%%%%%%%%%%%%%%%%%%%%%%%%%%

\section{ $\Lambda$-coalescent}\label{sec:coa}
In this section, we first introduce the $\Lambda$-coalescent and then discuss its coming down from infinity property. Finally, some  conditions are presented to describe the behaviors of the coalescent. %Finally, we give an estimation on the number of blocks.

\subsection{The $\Lambda$-coalescent}
A systematic introduction on exchangeable coalescents can be found in Bertoin \cite{Bertoin}. Let $[n]\equiv\{1,\ldots,n\}$ and $[\infty]\equiv\{1,2,\ldots\}$. An ordered partition of $D\subset[\infty]$ is a countable collection $\pi=\{\pi_i, i=1,2,\ldots\}$ of disjoint blocks such that $\cup_{i}\pi_i=D$ and $\min\pi_i<\min\pi_j$ for $i<j$. Then blocks in $\pi$ are ordered by their least elements. Denote by $\mathcal{P}_n$ the set of ordered partitions of $[n]$ and by $\mathcal{P}_\infty$  the set of ordered partitions of $[\infty]$. Write $\mathbf{0}_{[n]}\equiv\left\{\{1\},\ldots,\{n\}\right\}$ for the partition of $[n]$ consisting of singletons and $\mathbf{0}_{[\infty]}$ for the partition of $[\infty]$ consisting of singletons. Given $n\in[\infty]$ and $\pi\in\mathcal{P}_\infty$, let $R_n(\pi)\in\mathcal{P}_n$ be the restriction of $\pi$ to $[n]$.

Kingman's coalescent is a $\mathcal{P}_{\infty}$-valued time homogeneous Markov process such that all different pairs of blocks independently merge at the same rate. The $\La$-coalescent generalizes Kingman's coalescent by allowing multiple collisions (see Pitman \cite{Pitman99} and Sagitov \cite{Sag99}). It is a $\mathcal{P}_{\infty}$-valued Markov process $\Pi\equiv\(\Pi(t)\)_{t\geq 0}$ such that for each $n\in[\infty]$, its restriction to $[n]$, $\Pi_n\equiv\(\Pi_n(t)\)_{t\geq 0}$ is a $\mathcal{P}_n$-valued Markov process with transition rates below: given there are $b$ blocks in the partition, each $k$-tuple of them ($2\leq k\leq b$) independently merges to form a single block at rate
$\lambda_{b,k}=\int_{[0,1]}x^{k-2}(1-x)^{b-k}\Lambda(dx),$ where $\Lambda$ is a finite measure on $[0,1]$. For $n=2,3,\ldots$, denote by
$\lambda_n=\sum_{k=2}^n{n\choose k}\lambda_{n,k}$ the total coalescence rate starting with $n$ blocks. In addition, define $\gamma_n=\sum_{k=2}^n\(k-1\){n\choose k}\la_{n,k}$ as the rate at which the number of blocks decreases.

%For any $I\subseteq[0,1]$, write $\La\(I\)$ as the mass of $\La$ on $I$.
Provided $\La(\{0\})=0$, Pitman \cite{Pitman99} introduced  a Poisson point construction for the associated $\La$-coalescent. Let $\Psi$
be a Poisson point process on $(0,\infty)\times\{0,1\}^{\infty}$ with intensity measure $dtL(d\xi)$, where
\beqlb\label{eq:L}
L(d\xi)\equiv\int_{(0,1]} x^{-2}\La(dx)Q_{x}(d\xi)
\eeqlb
and under $Q_x$,  $\xi=\(\xi_1,\xi_2,\xi_3,\ldots\)$ is a sequence of independent Bernoulli trials with $Q_x\(\xi_i=1\)=x$ for all $i$.
Given an arbitrary partition $\pi\in\mathcal{P}_{\infty}$, let $\Pi_n(0)$ be the restriction of $\pi$ to $[n]$.
At a point $(t,\xi)$ of $\Psi(\cdot)$ with $\sum_{i}\xi_i\geq 2$,
$\Pi_n(t)$ is obtained from $\Pi_n(t-)$ by merging those blocks whose indexes belong to the collection $\{i:\xi_i=1\}$, while all other blocks with indexes in $\{i:\xi_i=0\}$ remain unchanged.
By such a construction, $\Pi_n$ is the
restriction to $[n]$ of $\Pi_{n+1}$ for every $n$. The Poisson point process $\Psi(\cdot)$ therefore
determines a unique $\mathcal{P}_{\infty}$-valued $\La$-coalescent process $\Pi$ whose restriction to
$[n]$ is $\Pi_n$ for every $n$.

\subsection{Coming down from infinity}
Given any $\La$-coalescent $\Pi=\(\Pi(t)\)_{t\geq 0}$ with $\Pi(0)=\mathbf{0}_{[\infty]}$, let $N(t)$ be the number of blocks in the partition $\Pi(t)$. The $\La$-coalescent $\Pi$ comes down from infinity if $\mathbb{P}\(N(t)<\infty\)=1$ for all $t>0$ and it  stays infinite if $\mathbb{P}\(N(t)=\infty\)=1$ for all $t>0$. Given $\Lambda(\{1\})=0$, a $\La$-coalescent either comes down from infinity with probability one or stays infinite with probability one; see Pitman \cite{Pitman99}. Throughout this paper, we always assume that $\Lambda(\{1\})=0$. Schweinsberg \cite{Jason2000} showed that
\begin{itemize}
\item the $\La$-coalescent comes down from infinity if and only if
$\sum_{n=2}^{\infty}\gamma_n^{-1}<\infty$;
\item the
$\La$-coalescent stays infinite if and only if
$\sum_{n=2}^{\infty}\gamma_n^{-1}=\infty$.
\end{itemize}

What follows are two examples of coalescents.
\begin{example}
If $\La=\delta_0$, a point mass at $0$, the associated coalescent is Kingman's
coalescent and comes down from infinity.
\end{example}

\begin{example}
If $\Lambda$ is the Beta$(2-\be,\be)$ function such that
\beqlb\label{eq:beta1}
\La(dx)=\frac{\Gamma(2)}{\Gamma(2-\be)\Gamma(\be)}x^{1-\be}\(1-x\)^{\be-1}dx
\eeqlb
for $\be\in(0,2)$, the associated $\Lambda$-coalescent is a Beta$(2-\be,\be)$ coalescent.
\begin{itemize}
\item In case of $\be\in\left(0,1\right]$, the Beta$(2-\be,\be)$-coalescent stays infinite.
\item In case of $\be\in\left(1,2\right)$, the Beta$(2-\be,\be)$-coalescent comes down from infinity.
\end{itemize}
\end{example}
For convenience we often identify Kingman's coalescent as the Beta$(0,2)$-coalescent.

\subsection{Some conditions for the main results}
Intuitively, if the coalescence rate is larger, then the speed of coming down is faster.  Two conditions in \cite{LZ2} are quoted here describing the specific orders related to the coalescence rates and the time of coming down from infinity. Moreover, two other conditions describing the fluctuation of the measure $\Lambda$ near $0$ are also stated.
\begin{condition}[Condition A in \cite{LZ2} with $\al=\be-1$]\label{con2}
 There exists a constant $\be>1$ such that the associated $\Lambda$-coalescent $\Pi$ satisfies
$$\limsup_{m\rightarrow\infty}m^{\be-1}\sum_{b=m+1}^{\infty}{\lambda_{b}}^{-1}<\infty.$$
\end{condition}

For any positive integer $m$, let
$\mathbb{T}_m=\inf\{t\geq 0:N(t)\leq m\}$
be the first time that the number of blocks in the associated $\Lambda$-coalescent is less than or equal to $m$
with the convention $\inf\emptyset=\infty$.

\begin{condition}[Assumption I in \cite{LZ2} with $\al=\be-1$]\label{con1}
There exists a constant $\be>1$ such that the associated $\Lambda$-coalescent $\Pi$ satisfies
$$\limsup_{m\rightarrow\infty}m^{\be-1}\mathbb{E} \(\mathbb{T}_m \)<\infty.$$
\end{condition}

\begin{remark}
By \cite[Remark 4.7]{LZ2},  Condition \ref{con2} implies Condition \ref{con1}.
Both are sufficient for the associated $\Lambda$-coalescent to come down from infinity.
\end{remark}

For the main results on the modulus of continuities for the ancestry process, we need to impose  conditions on the behaviors of measure $\Lambda$ near $0$, which determine the small time asymptotic number of blocks in the $\Lambda$ coalescent.
\begin{condition}\label{eq:lalow}
There exist  $\be\in (1,2)$, $A>0$ and $\ep \in (0,1)$ such that %$\La(\{0\})=0$ and
\beqnn
\La(dx)\geq A x^{1-\be} dx \text{\,\,\,\,for all\,\,\,\,} x\in[0,\ep].
\eeqnn
\end{condition}

\begin{condition}\label{eq:laup}
There exist $\be\in (1,2)$, $A>0$ and $\ep\in(0,1)$ such that %$\La(\{0\})=0$ and
\beqnn
\La(dx)\leq Ax^{1-\be} dx \text{\,\,\,\,for all\,\,\,\,} x\in[0,\ep].
\eeqnn
\end{condition}

Observe that the Beta$(2-\be,\be)$ coalescent with $\be\in(1,2)$ satisfies both Condition \ref{eq:lalow} and Condition \ref{eq:laup} for the same $\beta$. Condition \ref{eq:lalow} is coincident with the $(c,\ep,\ga)$-property, proposed in \cite{LZ1} for $c=A$ and $\ga=\be-1$.
It indicates that the measure $\La$ has enough mass distributed in a right neighborhood of $0$ to cause fast merging so that the associated coalescent comes down from infinity. The upper bound in Condition \ref{eq:laup} is comparable to the Beta function from (\ref{eq:beta1}). It indicates that the collision speed of the associated coalescent can not exceed that of Beta-coalescent in a small positive time.

\begin{remark}\label{con:compare}
If $\La$ has a non-trivial component at $0$, i.e. $\La(\{0\})>0$,  then Condition \ref{con2} holds with $\be=2$;
if $\La(\{0\})=0$ and Condition \ref{eq:lalow} holds, then by \cite[Lemma 4.13]{LZ1}, Condition \ref{con2} holds with $\be\in(1,2)$.
\end{remark}

%%%%%%%%%%%%%%%%%%%%%%%%%%%%%%%%%%%%%%%%%%%%%%%%%%%%%%%%%%%%%%%%%%%%%%%%%%%%%%%%%%%%%%%%%%%%%%%%%%%%%%%%%%%%%%%%%%%%%%%%%%%%%%%%%
%%%%%%%%%%%%%%%%%%%%%%%%%%%%%%%%%%%%%%%%%%%%%%%%%%%%%%%%%%%%%%%%%%%%%%%%%%%%%%%%%%%%%%%%%%%%%%%%%%%%%%%%%%%%%%%%%%%%%%%%%%%%%%%%%
\section{ $\Lambda$-Fleming-Viot process and its lookdown representation}\label{sec:lookdown}
In this section, we first discuss the lookdown representation of $\La$-Fleming-Viot process with Brownian spatial motion. Then we explain
how to recover the $\La$-coalescent and the ancestry process from the lookdown representation.

\subsection{Lookdown representation}
Donnelly and Kurtz \cite{DK96,DK99a,DK99b} introduced the lookdown representation, a discrete representation for measure-valued stochastic process.
It provides a unified framework by simultaneously describing the forwards evolution and the ancestry process of the approximating particle system.
We  follow Birkner and Blath \cite{BB} to give a brief review on the lookdown representation of the $\Lambda$-Fleming-Viot process with Brownian spatial motion. Considering
a countably infinite number of particles, each of them is attached a ``level" from the set $[\infty]=\{1,2,\ldots\}$. The evolution of a particle at level $n$ only depends on the evolution of the finite particles at lower levels, which allows the system to have a projective property. Let $\(X_1(t),X_2(t),X_3(t),\ldots\)$ be an $(\mathbb{R}^d)^{\infty}$-valued random variable, where $X_i(t)$ represents the spatial location of the particle at level $i$ with $i\in[\infty]$. The initial values $\left\{X_i(0),i\in[\infty]\right\}$ are exchangeable so that the limiting empirical measure $\lim_{n\rightarrow\infty}\frac{1}{n}\sum_{i=1}^n\delta_{X_i(0)}$ exists almost surely by de Finetti's theorem.

Denote by $\La_0$ the measure $\La$ restricted to $(0,1]$ to determine the multiple birth events. The remaining part $\La(\{0\})\delta_0$ determines the single birth events.

For the single birth events, let $\{\mathbf{N}_{ij}(t): 1\leq i<j<\infty\}$ be independent Poisson processes with rate $\La(\{0\})$. At a jump time $t$ of $\mathbf{N}_{ij}$, the particle at level $j$ looks down at the particle at level $i$ and copies the type there (the new particle at level $j$ can be viewed as the child of the original particle at level $i$). Types on  levels above $j$ are shifted accordingly, i.e., if $\Delta{\mathbf{N}}_{ij}(t)=1$, we have
\begin{eqnarray*}
X_k(t)\,=\,\begin{cases} X_k(t-), &\text{~~if $k<j$},\cr
X_i(t-), &\text{~~if $k=j$},\cr
X_{k-1}(t-), &\text{~~if $k>j$}.
\end{cases}
\end{eqnarray*}

The multiple birth events can be constructed by an independent Poisson point process $\tilde{\mathbf{N}}$ on $\mathbb{R}_{+}\times\left(0, 1\right]$ with intensity measure $dt\otimes x^{-2}\La_0\(dx\)$. Let $\{U_{ij}$, $i, j\in[\infty]\}$ be i.i.d. uniform $[0, 1]$ random variables. Jump points $\{\(t_i, x_i\)\}$ for $\tilde{\mathbf{N}}$ correspond to the multiple birth events. For $t\geq 0$ and  $J\subseteq [n]$ with $|J|\geq 2$, define
\begin{eqnarray*}
\mathbf{N}_J^n(t)\,:=\,\sum_{i:t_i\leq t}\prod_{j\in
J}\mathbf{1}_{\{U_{ij}\leq x_i\}}\prod_{j\in[n]\backslash
J}\mathbf{1}_{\{U_{ij}> x_i\}},
\end{eqnarray*}
where $\mathbf{1}_{\{\cdot\}}$ denotes an indicator function. Then $\mathbf{N}_J^n(t)$ counts the number of birth events among the particles from
levels $\{1,2,\ldots,n\}$ such that exactly those at levels in $J$ are involved up to time $t$. Each participating level adopts the type of the lowest level involved. All the other individuals are shifted upwards accordingly, keeping their original order. More specifically, if $t=t_i$ is a jump time and $j$ is the lowest level involved, then
\begin{eqnarray*}
X_k(t)\,=\,
\begin{cases}
X_k(t-), \text{~for~} k\leq j,\\
X_j(t-), \text{~for~} k>j \text{~with~} U_{ik}\leq x_i,\\
X_{k-J_t^k}(t-), \text{~otherwise},
\end{cases}
\end{eqnarray*}
where $J_{t}^k:=\#\{m<k, U_{im}\leq x_i\}-1$.

Between jump times of the Poisson point process, particles at different levels move independently according to Brownian motions in $\mathbb{R}^d$.

It is shown in \cite{DK99b} that for each $t>0$, $X_1(t),X_2(t),\ldots$ are exchangeable;
$$X(t):=\lim_{n\rightarrow\infty}X^{(n)}(t):= \lim_{n\rightarrow\infty}\frac{1}{n}\sum_{i=1}^n\delta_{X_i(t)}$$
exists almost surely by  de Finetti's theorem and $X\equiv\(X(t)\)_{t\geq 0}$ is right continuous and follows the
probability law of the $\La$-Fleming-Viot process with Brownian spatial motion. Write $M_1(\mathbb{R}^d)$ for the space of probability measures on $\mathbb{R}^d$ equipped with the topology of weak convergence.  One can show that $(X^{(n)}(t))_{t\geq 0}$ converges almost surely in $D_{M_1(\mathbb{R}^d)}([0, \infty)) $ to $(X(t))_{t\geq 0}$ that is a strong Markov process; see \cite[Theorem 1.1]{BB}.

\subsection{$\Lambda$-coalescent in the lookdown representation}
For any $0\leq s\leq t$, denote by $L_n^t(s)$ the ancestor's level at time
$s$  for the particle with level $n$ at time $t$. Write $\big(\Pi^t(s)\big)_{0\leq s\leq t}$ for the
$\mathcal{P}_\infty$-valued process such that $i$ and $j$ belong to
the same block of $\Pi^t(s)$ if and only if $L_i^t(t-s)=L_j^t(t-s)$.
The process $\big(\Pi^t(s)\big)_{
0\leq s\leq t}$ has the same law as the
$\La$-coalescent running up to time $t$.
For any $0\leq s\leq t$, write
\beqlb\label{def:N}
N({s,t})\ar:=\ar\#\Pi^{t}\(t-s\)
\eeqlb
for the number of blocks in $\Pi^{t}\(t-s\)$ and
%\beqnn
$\Pi^{t}\(t-s\):=\{\pi^{s,t}_{\ell}: 1\leq \ell\leq N({s,t})\},$
%\eeqnn
where $\pi^{s,t}_{\ell}, 1\leq \ell\leq N({s,t})$ are all the
disjoint blocks ordered by their least
elements. Let
\beqlb\label{eq:comdown}
\mathbb{T}^t_m:=\inf\{ 0\leq s\leq t:\,\#\Pi^t(s)\leq m\}
\eeqlb
with the convention $\inf\emptyset=t$.

\subsection{Ancestry process}
For any $T>0$,
denote by
$\(X_{1,t},X_{2,t},X_{3,t},\ldots\)_{0\leq t\leq T}$
the ancestry process recovered from the lookdown representation
with $X_{i,t}$ defined by
\beqnn
X_{i,t}\(s\)\ar:=\ar
X_{L_i^t\(s\)}\(s-\) \text{~~for~~}0\leq s\leq t.
\eeqnn
By the definition, $\(X_{i,t}(s)\)_{0\leq s\leq t}$ keeps a record of the trajectory for the $i$-th particle alive at time $t$
and $\(X_{1,t},X_{2,t},X_{3,t},\ldots\)$ depicts the genealogy of all those particles alive at time $t$.
We call $\(X_{1,t},X_{2,t},X_{3,t},\ldots\)$ the ancestry process recovered at time $t$.
For any $0\leq r< s\leq t$, let $H^{t}(r,s)$ be the maximal dislocation  between times $r$ and $s$  of the ancestry process
recovered at time $t$ defined by
\beqlb\label{eq:max}
H^{t}(r,s)\ar:=\ar\max_{j\in [\infty]}\left|X_{L_j^{t}(s)}(s-)-X_{L_j^{t}(r)}\(r-\)\right|\cr
\ar=\ar\max_{1\leq \ell\leq
N(r,t)}\max_{j\in\pi^{r,t}_{\ell}}\left|X_{L_j^{t}(s)}\(s-\)-X_{\ell}(r-)\right|,
\eeqlb
where the last equation follows from the fact that the ancestors always come from those particles with lower levels (\cite[Lemma 3.2]{LZ1}).
Therefore, $H^{t}(r,s)$ describes the maximal displacement between two sequential ancestors at times $r$ and $s$ (with one being the ancestor of the other) of those particles alive at $t$.
If $s=t$ in (\ref{eq:max}), then $H^{t}(r,t)$  can be interpreted as the maximal dislocation of the ancestry process during time period $[r,t]$.

%%%%%%%%%%%%%%%%%%%%%%%%%%%%%%%%%%%%%%%%%%%%%%%%%%%%%%%%%%%%%%%%%%%%%%%%%%%%%%%%%%%%%%%%%%%%%%%%%%%%%%%%%
%%%%%%%%%%%%%%%%%%%%%%%%%%%%%%%%%%%%%%%%%%%%%%%%%%%%%%%%%%%%%%%%%%%%%%%%%%%%%%%%%%%%%%%%%%%%%%%%%%%%%%%%%%%%%
\section{Main results}\label{sec:theorems}
Let $\(\Omega,\mathcal{F},\mathcal{F}_t,\mathbb{P}\)$
be a filtered probability space, on which the $\Lambda$-Fleming-Viot process $\(X(t)\)_{t\geq0}$ with Brownian spatial motion is defined.
For any $\nu\in M_1(\mathbb{R}^d)$ write $\mathbb{P}_{\nu}$ for the law of  $\(X(t)\)_{t\geq0}$ with initial state $\nu$.
Recall the modulus function $h(x)=\sqrt{x\log\(1/x\)}$ for any $x>0$. We remark that  in all of the following results,  no additional condition is needed for the case $\Lambda(\{0\})>0$ and the related modulus of continuities hold for $\beta=2$. The proofs of these results are deferred to Section \ref{sec:result}.

\begin{theorem}\label{co:moduglobal}
Assume that either $\La(\{0\})>0$ holds or $\La(\{0\})=0$ together with  Conditions \ref{eq:lalow} and \ref{eq:laup} hold for the same $\be$ and possibly different constants $A$. Then for any $T>0$ and any $\nu\in M_1(\mathbb{R}^d)$, we have $\mathbb{P}_\nu$-a.s.	 \begin{equation}\label{uniform_modu}
	\limsup_{\ep\to 0+}\sup_{\ep\leq t\le T }\frac{H^t\({t-\ep,t}\)}{h(\ep)}=\sqrt{\frac{2\beta}{\beta-1}}.
\end{equation}
\end{theorem}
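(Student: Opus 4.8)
The plan is to prove the two matching inequalities separately, an upper bound $\limsup\le\sqrt{2\beta/(\beta-1)}$ and a lower bound $\limsup\ge\sqrt{2\beta/(\beta-1)}$, in each case combining the coming-down-from-infinity speed of the dual coalescent with Gaussian increment estimates and the interval-partitioning scheme of \cite{DIP}. The starting point is the representation \eqref{eq:max}: $H^t(t-\ep,t)$ is a maximum, over the $N(t-\ep,t)$ ancestral blocks and over the descendants in each block, of displacements $|X_j(t-)-X_\ell((t-\ep)-)|$ along ancestral lineages; since between reproduction events each lineage evolves as a $d$-dimensional Brownian motion, every such displacement is the increment, over a time window of length $\ep$, of a Brownian path. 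Thus $H^t(t-\ep,t)$ is controlled by a maximum of Brownian increments over a collection of lineages whose cardinality is governed by $N(t-\ep,t)=\#\Pi^t(\ep)$. The small-$\ep$ order of this count is the first ingredient: Condition \ref{eq:lalow} forces fast coalescence and yields the upper estimate $N(t-\ep,t)\lesssim \ep^{-1/(\beta-1)}$, while Condition \ref{eq:laup} forbids overly fast coalescence and, through the sharp block-number asymptotics for the dual coalescent of Berestycki et al. \cite{BBL}, yields a matching lower estimate $N(t-\ep,t)\gtrsim \ep^{-1/(\beta-1)}$.

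For the upper bound I would work along a geometric sequence $\ep_n=q^n$ and partition $[0,T]$ into $\sim T/\ep_n$ subintervals of length $\ep_n$, bounding $\sup_{\ep_n\le t\le T}H^t(t-\ep_n,t)$ by the maximum of Brownian increments over all lineages in all subintervals. The number of lineage–interval pairs is then of order $(T/\ep_n)\cdot\ep_n^{-1/(\beta-1)}=T\,\ep_n^{-\beta/(\beta-1)}$, so that $\tfrac{\beta}{\beta-1}\log(1/\ep_n)$ is the effective logarithm of the count. A reflection-principle tail estimate for the maximal Brownian increment, applied with threshold $\sqrt{2\tfrac{\beta}{\beta-1}(1+\delta)\log(1/\ep_n)}\,\sqrt{\ep_n}$, makes the union bound summable in $n$, and Borel–Cantelli gives $\limsup_n \sup_t H^t(t-\ep_n,t)/h(\ep_n)\le\sqrt{2\beta/(\beta-1)}$; a routine monotonicity argument filling in between consecutive $\ep_n$ upgrades this to the full limit $\ep\to0+$. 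The factor $\beta$, rather than $1$, is exactly the contribution of the extra $\ep^{-1}$ time-intervals in the global supremum stacked on top of the $\ep^{-1/(\beta-1)}$ spatial lineages.

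The lower bound carries the real difficulty. Here I would select a sparse, well-separated family of times and subintervals so that the associated ancestral lineages are essentially independent, use the sharp lower estimate $N(t-\ep,t)\gtrsim\ep^{-1/(\beta-1)}$ from \cite{BBL} to guarantee at least $\sim\ep^{-\beta/(\beta-1)}$ effectively independent Brownian increments across the $\sim T/\ep$ windows, and invoke the lower tail of the maximum of that many Gaussians. With threshold $(1-\delta)\sqrt{2\tfrac{\beta}{\beta-1}\log(1/\ep)}\,\sqrt{\ep}$ the probability that \emph{no} increment exceeds it behaves like $\exp(-\ep^{-\delta\beta/(\beta-1)})\to0$, forcing some displacement past $(1-\delta)\sqrt{2\beta/(\beta-1)}\,h(\ep)$ with overwhelming probability; a second Borel–Cantelli argument over the independent sub-family then pushes the $\limsup$ up to $\sqrt{2\beta/(\beta-1)}$, and letting $\delta\downarrow0$ completes the proof.

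The main obstacle is precisely this lower bound. The lookdown construction couples all lineages, so extracting a genuinely independent sub-collection of displacements with the sharp count $\ep^{-\beta/(\beta-1)}$—uniformly enough in $t$ to survive the global supremum—cannot be done with the cruder moment bounds on $N(t-\ep,t)$ used in \cite{LZ2}, and this is where the precise dual-coalescent estimate of \cite{BBL} is indispensable. Concretely, I expect the delicate points to be quantifying the concentration of $N(t-\ep,t)$ around $\ep^{-1/(\beta-1)}$ sharply enough that the $O(1)$ error in $\log N$ does not corrupt the constant, and establishing the approximate independence of spatially and temporally separated sub-families of lineages well enough to run the reverse Borel–Cantelli step.
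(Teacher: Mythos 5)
Your skeleton is essentially the paper's: the upper bound is obtained exactly as you outline (the partition scheme of \cite{DIP}, a high-probability count bound $N(t-\epsilon,t)\lesssim\epsilon^{-1/(\beta-1)}$ traceable to Condition \ref{eq:lalow} through Condition \ref{con2}, Gaussian tails, Borel--Cantelli, with the extra factor $\beta$ coming precisely from the $\sim T\epsilon^{-1}$ time windows), and the lower bound rests on the block-count speed estimate of \cite{BBL} plus independence over disjoint windows, exactly as in Lemmas \ref{th:global1} and \ref{th:global2}. However, your lower-bound plan has a concrete soft spot at the step you yourself flag. The estimate from \cite{BBL} (Propositions \ref{pro:ancestor} and \ref{pro:v}) gives $N\bigl((k-1)2^{-n},k2^{-n}\bigr)\geq C\,2^{n/(\beta-1)}$ per window only with probability $1-O\bigl(2^{-n(1-2\alpha^{*})}\bigr)$ for $\alpha^{*}\in(0,1/2)$; with $\sim T2^{n}$ windows a union bound over these failures costs $T2^{2\alpha^{*}n}\to\infty$, so the event ``at least $\sim\epsilon^{-\beta/(\beta-1)}$ independent increments are available across all windows,'' on which your single $\exp(-\epsilon^{-c\delta})$ computation is predicated, is \emph{not} a high-probability event. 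The paper's resolution is to keep the error inside each factor of a product: by exact independence across disjoint windows, $\mathbb{P}_\nu(F_n^c)$ factors, each factor is bounded by $(1-p_n)^{m_n}+O\bigl(2^{-n(1-2\alpha^{*})}\bigr)$ with $p_n m_n\asymp n^{d/2-1}2^{-n(c^{2}/2-1/(\beta-1))}$, and $\alpha^{*}$ is chosen as in (\ref{eq:a*}) so that $c^{2}/2-1/(\beta-1)<1-2\alpha^{*}$, making the coalescent error negligible against $1-(1-p_n)^{m_n}$; the product over $T2^{n}$ windows then still tends to $0$ for every $c<\sqrt{2\beta/(\beta-1)}$. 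So the constant survives not because $N$ concentrates with superpolynomial probability (it does not, at this level of generality), but because the polynomial error is dominated \emph{within each window} by the per-window success probability.

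Two further simplifications relative to your sketch. First, your ``approximate independence of spatially and temporally separated sub-families'' is unnecessary: in the lookdown construction the genealogical events and the spatial increments over nonoverlapping time intervals are driven by restrictions of the driving Poisson point processes and Brownian motions to disjoint windows, hence are \emph{exactly} independent, and no sparsification is needed; moreover no second Borel--Cantelli is required, since $\mathbb{P}_\nu(F_n)\to1$ together with reverse Fatou already yields $\mathbb{P}_\nu(\limsup_n F_n)=1$, after which one lets $c\uparrow\sqrt{2\beta/(\beta-1)}$ along a countable sequence. Second, your ``routine monotonicity fill-in'' for the upper bound hides the two genuinely technical steps of Proposition \ref{le:upp}: arbitrary times $s<t$ are matched to a fine grid of mesh $\theta^{n}/M$, the offcuts $[s,s_0]$ and $[t_0,t]$ are absorbed using the coarse modulus $C^{*}h(\cdot)$ of \cite{LZ2} (Lemma \ref{le:mod1}) with $M$ large making that contribution an arbitrarily small fraction of $c$, and the grid-to-grid dislocation is controlled by the geometric chaining of Lemma \ref{le:mod}; in addition, the uniform-in-$t$ count bound needs the exponential-moment estimate on the coming-down time (the $I_{2,k}$ term, via Condition \ref{con2}), not merely a fixed-$t$ bound. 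With these repairs your proposal becomes the paper's proof.
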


\begin{remark}
As far as we know, the only work related to modulus of continuity for Fleming-Viot process has been proposed in our previous paper \cite{LZ2},  where we have just derived that the left side of (\ref{uniform_modu}) is bounded by a constant $C^{*}$.
See Lemma \ref{le:mod1} and Remark \ref{re:C} below for detailed descriptions. The current result presents the exact value and has greatly improved the previous work.
One can also compare (\ref{uniform_modu}) with the exact modulus of continuity for standard $d$-dimensional Brownian motion $(B(t))_{t\geq 0}$ such that
	$$\limsup_{\ep\to 0+}\sup_{\ep\leq t\le T }\frac{|B(t)-B(t-\ep)|}{h(\ep)}=\sqrt{2} \,\,a.s. $$
See e.g. M\"orters and Peres \cite[Theorem 1.14]{MP}. A similar sharp global modulus of continuity  can be found in Dawson et al. \cite[Theorem 4.7]{DIP} for historical sample paths of super-Brownian motion that corresponds to the  Kingman-Fleming-Viot process.
\end{remark}

\begin{theorem}\label{co:modu}
Assume that either $\La(\{0\})>0$ holds or $\La(\{0\})=0$ together with Conditions \ref{eq:lalow} and \ref{eq:laup} hold for the same $\be$ and possibly different constants $A$.
Then for fixed $t>0$, $s\geq 0$ and any $\nu\in M_1(\mathbb{R}^d)$, we have $\mathbb{P}_\nu$-a.s.
	\begin{equation*}
	\limsup_{\ep\to 0+}\frac{H^t\({t-\ep,t}\)}{h(\ep)}=\sqrt{\frac{2}{\beta-1}}
\end{equation*}	
and
	\begin{equation*}
	\limsup_{\ep\to 0+}\frac{H^{s+\ep}\({s, s+\ep}\)}{h(\ep)}=\sqrt{\frac{2}{\beta-1}}.
\end{equation*}	
\end{theorem}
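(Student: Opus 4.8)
\textbf{Proof strategy for Theorem \ref{co:modu}.}

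The plan is to establish matching upper and lower bounds for the two local $\limsup$ quantities, treating the first (left local modulus, recovered at fixed time $t$) in detail and then indicating how the second (right local modulus) follows by an analogous argument. The guiding heuristic is that, unlike the global statement of Theorem \ref{co:moduglobal}, the local modulus at a single deterministic time does not require a supremum over a family of time points, so the combinatorial factor $\sqrt{\beta}$ coming from the number of subintervals to be controlled simultaneously disappears; only the intrinsic $\sqrt{2/(\beta-1)}$ factor from the interplay between the Brownian spatial motion and the coalescent rate of coming down survives.

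First I would set up the upper bound. Fix $t>0$ and study $H^t(t-\ep,t)$ as $\ep\to 0+$. By the representation in (\ref{eq:max}), this is a maximum over $\ell$ and over $j\in\pi^{t-\ep,t}_\ell$ of the spatial displacement $|X_{L_j^t(t)}(t-)-X_\ell((t-\ep)-)|$ of the underlying Brownian motions along the genealogy. Conditionally on the coalescent structure, each such displacement is (a projection of) a $d$-dimensional Brownian increment over a time span at most $\ep$, and the number of distinct ancestors at time $t-\ep$ is $N(t-\ep,t)$. Using Conditions \ref{eq:lalow} and \ref{eq:laup}, which pin down the small-time asymptotics of the block-counting process to order $\ep^{-1/(\beta-1)}$ (as in the estimates of \cite{BBL} and \cite{LZ2}), one bounds the number of relevant Brownian paths and applies a Gaussian tail estimate together with a Borel--Cantelli argument along a geometric sequence $\ep_k=\rho^k$. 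The key point is that one must take the maximum of roughly $\ep^{-1/(\beta-1)}$ independent-like Gaussian increments of variance $\sim\ep$, and $\max$ of $M$ such increments is of order $\sqrt{\ep\cdot 2\log M}\sim \sqrt{\ep\cdot \tfrac{2}{\beta-1}\log(1/\ep)}=\sqrt{\tfrac{2}{\beta-1}}\,h(\ep)$. This produces the upper bound $\sqrt{2/(\beta-1)}$; the chaining/continuity correction between points of the geometric sequence must be shown to be negligible, which is where the single-time (rather than uniform) nature of the statement saves the extra constant.

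For the lower bound I would exhibit, along a suitable deterministic sequence $\ep_k\to 0$, a single ancestor whose Brownian displacement over $[t-\ep_k,t]$ is at least $(1-\delta)\sqrt{2/(\beta-1)}\,h(\ep_k)$ with probability tending to a positive limit, then amplify to probability one via independence across well-separated scales and a second-moment or Borel--Cantelli argument. Concretely one uses the lower estimate on $N(t-\ep,t)$ furnished by Condition \ref{eq:lalow} (through \cite{BBL}) to guarantee that there are at least $\sim\ep^{-1/(\beta-1)}$ genuinely distinct ancestral lineages, each carrying a fresh Brownian increment of variance of order $\ep$; the maximum of that many increments reaches the claimed level. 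The right local modulus for $H^{s+\ep}(s,s+\ep)$ is handled by the same two-sided scheme, noting that recovering the ancestry at the moving time $s+\ep$ rather than the fixed time $t$ only shifts which coalescent is sampled but leaves the small-time asymptotics and the Gaussian estimates unchanged.

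The main obstacle I anticipate is the lower bound, specifically decoupling the Brownian displacements from the random genealogy so that one may treat the ancestral increments as (nearly) independent Gaussians and legitimately take their maximum. The difficulty is that the set of ancestors, their number $N(t-\ep,t)$, and the time spans of their lineages are all determined by the same Poisson point process that, in the lookdown construction, is independent of the spatial Brownian motions; exploiting this independence cleanly---while simultaneously controlling the lower fluctuations of $N(t-\ep,t)$ uniformly enough to extract $\sim\ep^{-1/(\beta-1)}$ asymptotically independent increments---is the delicate part. I expect this to require the sharp one-sided estimate on the coalescent from \cite{BBL} rather than merely the orders given by Conditions \ref{con1} and \ref{con2}, together with care in passing from a positive-probability event at each scale to an almost-sure statement across scales.
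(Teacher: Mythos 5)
Your strategy is essentially the paper's own: the upper bounds are proved exactly as you sketch, by a geometric partition of a one-sided neighborhood of the fixed time, a union bound over at most $\theta^{-k/(\beta-1)}k^{2/(\beta-1)}$ blocks per scale (Lemma \ref{le:mod}, under Condition \ref{con2}), Gaussian tail estimates, Borel--Cantelli, and a grid approximation in which the coarse modulus of Lemma \ref{le:mod1} with constant $C^{*}$ absorbs the off-grid gap over an interval of length $\theta^n/M$ (Propositions \ref{le:localupp} and \ref{le:localupp2}); and your opening diagnosis that the constant drops from $\sqrt{2\beta/(\beta-1)}$ to $\sqrt{2/(\beta-1)}$ because the union over $O(\theta^{-k})$ time blocks at scale $k$ disappears is exactly right --- it is that union, not the chaining correction, which costs the extra $\sqrt{\beta}$ in the global theorem, so your later attribution of the saving to the chaining step is slightly off.

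Two concrete corrections. First, you have the roles of the two conditions on $\Lambda$ reversed: the lower bound $N(t-\ep,t)\gtrsim \ep^{-1/(\beta-1)}$ on the number of lineages, which drives the lower bound on the modulus, comes from Condition \ref{eq:laup} (an \emph{upper} bound on $\Lambda$ near $0$ slows coalescence, giving $\psi(q)\lesssim q^{\beta}$ and hence $v(t)\gtrsim t^{-1/(\beta-1)}$ in Propositions \ref{pro:v} and \ref{pro:ancestor}), whereas Condition \ref{eq:lalow} enters only through Remark \ref{con:compare} to secure Condition \ref{con2} for the upper bound. Second, the amplification step you single out as the main obstacle --- passing from a positive-probability event at each scale to an almost sure statement via ``independence across well-separated scales'' --- would fail as stated, since the intervals $[t-\ep_k,t]$ are nested with a common endpoint and the corresponding dislocations are not independent across scales. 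But no amplification is needed: conditionally on the genealogy (which in the lookdown construction is independent of the spatial motions), the $\gtrsim 2^{n/(\beta-1)}$ distinct ancestral lineages over $[t-2^{-n},t]$ carry i.i.d.\ Brownian increments, so for $c<\sqrt{2/(\beta-1)}$ the probability of $F_{n,t}=\{H^{t}(t-2^{-n},t)>ch(2^{-n})\}$ tends to $1$, not merely to a positive limit (Lemma \ref{th:localleft2}), and reverse Fatou, $\mathbb{P}(\limsup_n F_{n,t})\geq\limsup_n\mathbb{P}(F_{n,t})=1$, already yields the almost sure lower bound; the right local statement at $s$ is handled identically with $\tilde F_{n,s}$ (Lemma \ref{th:localright2}).
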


The above results can be applied to obtain new results concerning  modulus of continuity for the support process.
Given $\eta>0$, for any Borel set ${A}\subset\mathbb{R}^d$, let
$\mathbb{B}\({A},\eta\)\equiv\overline{\bigcup_{x\in A}\mathbb{B}\(x,\eta\)}$ be its closed $\eta$-neighborhood with $\mathbb{B}\(x,\eta\)$ being the closed ball centered at $x$ with radius $\eta$.
Denote by $S(\mu)$ the closed support of a measure $\mu$. For any compact subsets $K_1, K_2$ of $\mathbb{R}^d$ let
$$\rho_1(K_1, K_2):=\sup_{x\in K_1}d(x, K_2)\wedge 1\,\,\text{and}\,\,\rho (K_1, K_2):=\rho_1(K_1, K_2)\vee  \rho_1(K_2, K_1)$$
with $\rho(K_1, \phi):=1.$
Then $\rho(\cdot, \cdot)$ is the Hausdorff metric on the space of compact sets in $\mathbb{R}^d$.

\begin{corollary}\label{co:suppor}
Assume that Condition \ref{con2} holds.
\begin{enumerate}
\item\label{co:global}
Given any $T>0$, $\nu\in M_1(\mathbb{R}^d)$ and  $c>\sqrt{2\be/(\be-1)}$, we have $\mathbb{P}_{\nu}$-a.s.
	$S(X(t))\subseteq\mathbb{B}(S(X(t-\ep)), ch(\ep))$ for all $0\leq t-\ep<t\leq T$ and small enough $\ep>0$.
\item\label{co:localleft}
For each fixed $t>0$, any $\nu\in M_1(\mathbb{R}^d)$ and $c>\sqrt{2/(\be-1)}$, we have $\mathbb{P}_{\nu}$-a.s. $S(X(t))\subseteq\mathbb{B}(S(X(t-\ep)), ch(\ep))$ for all small enough $\ep>0$.
\item\label{co:localright}
For each fixed $s\geq0$, any $\nu\in M_1(\mathbb{R}^d)$ and $c>\sqrt{2/(\be-1)}$, we have $\mathbb{P}_{\nu}$-a.s.
		$S(X(s+\ep))\subseteq\mathbb{B}(S(X(s)), ch(\ep))$ for all small enough $\ep>0$.
\item\label{co:righcon}
For any $\nu\in M_1(\mathbb{R}^d)$, we have $\mathbb{P}_\nu$-a.s. $S(X(t))$ is right continuous in the Hausdorff metric for all $t>0$.
\end{enumerate}
\end{corollary}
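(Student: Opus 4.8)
The plan is to convert the one-sided control of the ancestry process supplied by Theorems \ref{co:moduglobal} and \ref{co:modu} into one-sided inclusions between successive supports, exploiting two facts from the lookdown picture: the closed support is recovered as the closure of the particle locations, $S(X(t))=\overline{\{X_i(t-):i\in[\infty]\}}$ as in \cite{LZ1}, and by the second expression in \eqref{eq:max} every particle alive at the later time sits within $H$ of its own ancestor at the earlier time. Before doing this I would reconcile the hypotheses: the corollary assumes only Condition \ref{con2}, whereas the theorems quote Conditions \ref{eq:lalow} and \ref{eq:laup}. The resolution is that the inclusions require only the \emph{upper} halves of the two modulus results, and these upper halves hold at the stated sharp constants already under Condition \ref{con2}. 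Indeed, the upper bound on $H$ comes from an upper bound on the number of ancestral lineages $N(t-\ep,t)$, which Condition \ref{con2} (via the control of $\mathbb{T}_m$) delivers with exponent $1/(\be-1)$; the undetermined multiplicative constant is absorbed by the logarithm in $h$, so the precise upper constants $\sqrt{2\be/(\be-1)}$ and $\sqrt{2/(\be-1)}$ survive. The matching \emph{lower} bounds, which need Condition \ref{eq:laup}, play no role here.

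For parts \ref{co:global}, \ref{co:localleft} and \ref{co:localright} I would fix $c$ strictly above the relevant limit value and read off from the upper bound the estimate $H\le c\,h(\ep)$ for all small $\ep$: in \ref{co:global} this holds simultaneously for every $t\in[\ep,T]$ by the supremum form of Theorem \ref{co:moduglobal}, while in \ref{co:localleft} and \ref{co:localright} it holds for fixed $t$ (resp. $s$) via the two displays of Theorem \ref{co:modu} applied to $H^t(t-\ep,t)$ and $H^{s+\ep}(s,s+\ep)$. Since the ancestor of a particle alive at the later time is a particle location at the earlier time, hence lies in the earlier support, and is within $H$ of its descendant, every location in the later support lies within $c\,h(\ep)$ of the earlier support; taking closures and using that the closed $\eta$-neighbourhood $\mathbb{B}(\cdot,\eta)$ is closed yields $S(X(t))\subseteq\mathbb{B}(S(X(t-\ep)),c\,h(\ep))$ (resp. the versions for \ref{co:localleft} and \ref{co:localright}).

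For the right continuity in part \ref{co:righcon} I would split $\rho(S(X(t+\ep)),S(X(t)))=\rho_1(S(X(t+\ep)),S(X(t)))\vee\rho_1(S(X(t)),S(X(t+\ep)))$. The first term is bounded by $c\,h(\ep)\wedge1\to0$ directly from part \ref{co:localright} at $s=t$. The reverse term is where the genuine difficulty lies, since no modulus estimate controls it: one must show that points of $S(X(t))$ are approximated from the right, i.e. that lineages present at time $t$ do not vanish instantaneously from the support. I would obtain this from lower semicontinuity of the support under weak convergence. By the right continuity of $X$ in $D_{M_1(\mathbb{R}^d)}$ recalled in Section \ref{sec:lookdown}, $X(t+\ep)\to X(t)$ weakly, so the portmanteau inequality forces $X(t+\ep)$ to charge every neighbourhood of a fixed $x\in S(X(t))$ eventually, giving $d(x,S(X(t+\ep)))\to0$ for each such $x$. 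To pass from this pointwise statement to the uniform bound $\sup_{x\in S(X(t))}d(x,S(X(t+\ep)))\to0$, I would cover the \emph{compact} set $S(X(t))$ (compactness being the compact support property, available under Condition \ref{con2}) by finitely many small balls centred at support points and apply the pointwise estimate at the finitely many centres. The main obstacle is precisely this reverse inclusion: the ancestry machinery is one-directional, and closing the gap requires the independent ingredients of weak right continuity of the measure-valued process and compactness of the support, neither of which is derivable from the displacement estimates alone.
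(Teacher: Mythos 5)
Your route for parts \ref{co:global}--\ref{co:localright} is essentially the paper's: it combines the upper-bound displacement estimates (Propositions \ref{le:upp}, \ref{le:localupp} and \ref{le:localupp2}, which indeed hold under Condition \ref{con2} alone, so your reconciliation of the hypotheses is exactly right) with the fact that ancestor locations lie in the earlier support. But there is a genuine gap where you write $S(X(t))=\overline{\{X_i(t-):i\in[\infty]\}}$ ``as in \cite{LZ1}'': the inclusion $\{X_i(t),i=1,2,\ldots\}\subseteq S(X(t))$ from \cite[Lemma 3.1]{LZ2} is an almost-sure statement \emph{for each fixed} $t$, while in parts \ref{co:global} and \ref{co:localleft} the earlier time $t-\ep$ ranges over uncountably many values, so you need the inclusion simultaneously for all $t\in[0,T]$ outside a single null set. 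The paper devotes Lemma \ref{le:ut} to precisely this uniform version, proved by a nontrivial adaptation of Perkins' argument (the strong Markov property applied to stopping times $T_r(B)$ over rational times $r$ and rational balls $B$, together with the modulus of continuity); it cannot be waved through from the fixed-time result. The reverse containment $S(X(t))\subseteq\overline{\{X_i(t)\}}$, which you also use, does hold for all $t$ at once from the a.s.\ path convergence of the empirical measures, so that direction is fine.

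For part \ref{co:righcon} you deviate from the paper in two ways, one problematic and one legitimate. Problematic: you bound $\rho_1(S(X(t+\ep)),S(X(t)))$ by part \ref{co:localright} at $s=t$; but \ref{co:localright} holds a.s.\ for each \emph{fixed} $s$, so the exceptional null set depends on $t$, whereas \ref{co:righcon} asserts right continuity at all $t>0$ simultaneously. The paper instead invokes part \ref{co:global}, whose exceptional set is uniform over $[0,T]$, giving $\rho_1(S(X(t+\Delta t)),S(X(t)))\leq ch(\Delta t)\to 0$ for every $t$ at once; substituting this repairs your argument immediately. Legitimate: for the reverse term $\rho_1(S(X(t)),S(X(t+\ep)))$ the paper simply cites the right continuity of $X$ in $M_1(\mathbb{R}^d)$ together with \cite[Lemma 5.5]{LZ2}, while you reprove that ingredient from scratch --- portmanteau gives $d(x,S(X(t+\ep)))\to 0$ pointwise on $S(X(t))$, and compactness of the support (available under Condition \ref{con2} via \cite[Theorems 4.2 and 4.3]{LZ2}) upgrades this to the uniform bound by a finite cover of ball centers. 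That argument is correct, is pathwise (hence incurs no $t$-dependent null set), and is a transparent substitute for the citation; your diagnosis that this reverse inclusion is exactly the part not reachable from the displacement estimates matches the paper's structure.
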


For any $t>0$ write
$$\rho(t):=\inf \{r\geq 0: S(X(t))\subseteq \mathbb{B}(0, r) \}$$
for the maximal dislocation (from the origin) that the support can reach at time $t$.
The next result gives the rate of propagation of support when started with a point mass situated at  $0$. A similar result
for super-Brownian motion can be found in Tribe \cite[Theorem 2.1]{Tri89}.

\begin{corollary}\label{co:rho}
	Assume that either $\La(\{0\})>0$ holds or $\La(\{0\})=0$ together with Conditions {\ref{eq:lalow}} and {\ref{eq:laup}} hold for the same $\be$ and possibly different constants $A$. Given $X(0)=\delta_0$, the Dirac mass at $0$,   we have $\mathbb{P}_{\delta_{0}}$-a.s.
	$$\lim_{t\to 0+}\frac{\rho(t)}{h(t)}=\sqrt{\frac{2}{\be-1}}. $$
\end{corollary}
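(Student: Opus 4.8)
The plan is to deduce Corollary~\ref{co:rho} from the local left and right modulus of continuities in Theorem~\ref{co:modu} applied at the special time $s=0$ with initial state $\delta_0$. The key observation is that $\rho(t)$ measures the farthest distance from the origin reached by the support at time $t$, and when $X(0)=\delta_0$ every particle alive at time $t$ has a single common ancestor located at the origin at time $0$ (since the initial empirical measure is concentrated at $0$). Consequently, for the ancestry process recovered at time $t$, the displacement $\left|X_{L_j^t(t)}(t-)-X_{L_j^t(0)}(0-)\right|$ is exactly the distance of the $j$-th particle's current location from $0$. The plan is therefore to relate $\rho(t)$ to the maximal dislocation $H^{t}(0,t)$, which by the right-local statement of Theorem~\ref{co:modu} (taking $s=0$, $\ep=t$) satisfies $\limsup_{t\to 0+} H^{t}(0,t)/h(t)=\sqrt{2/(\be-1)}$.

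First I would establish the identification $\rho(t)=H^{t}(0,t)$, or at least that the two quantities agree in the relevant limit. For the upper bound on $\rho(t)$, note that the support $S(X(t))$ is contained in the closure of the set of current particle locations $\{X_{L_j^t(t)}(t-):j\in[\infty]\}$, so any point $x\in S(X(t))$ satisfies $|x|\le H^t(0,t)$ because all ancestors at time $0$ sit at the origin; this gives $\rho(t)\le H^t(0,t)$. For the matching lower bound, I would argue that the farthest particle location actually appears in the support, so that $\rho(t)\ge H^t(0,t)$ up to the negligible boundary effects, using that the empirical measure $X^{(n)}(t)$ converges to $X(t)$ and places mass near the extremal particle. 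Combining the two inequalities yields $\rho(t)/h(t)=H^t(0,t)/h(t)$ in the limit, and then Theorem~\ref{co:modu} gives the value $\sqrt{2/(\be-1)}$.

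The remaining point is to upgrade the $\limsup$ in Theorem~\ref{co:modu} to a genuine limit, since Corollary~\ref{co:rho} asserts $\lim_{t\to 0+}\rho(t)/h(t)=\sqrt{2/(\be-1)}$. Here I would use that $\rho(t)$ is tied to a single growing family of ancestors all emanating from $0$: as $t\to 0+$ the extremal displacement grows monotonically in a controlled way, and the lower bound from the coalescent estimate (via Berestycki et al.\ \cite{BBL} as invoked for Theorem~\ref{co:modu}) forces $\liminf_{t\to0+}\rho(t)/h(t)\ge\sqrt{2/(\be-1)}$ as well. The monotone/accumulative structure at the fixed time $s=0$ is what converts the $\limsup$ into a limit; this is precisely the phenomenon underlying Tribe's result \cite[Theorem 2.1]{Tri89} for super-Brownian motion that the statement is modeled on.

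The main obstacle I expect is the precise identification $\rho(t)=H^t(0,t)$, in particular the lower bound $\rho(t)\ge H^t(0,t)$: one must verify that the particle achieving the maximal dislocation genuinely contributes to the closed support $S(X(t))$ rather than being an exchangeable artifact of the lookdown construction that carries asymptotically zero empirical weight. Controlling this requires a careful argument that the extremal ancestral lineage is not isolated but is shadowed by a positive-density cluster of nearby particles, so that $X(t)$ assigns positive measure to every neighborhood of the extremal location. This is the same delicacy that arises in proving the support lower bounds in \cite{LZ2}, and I would handle it by exploiting the exchangeability of $(X_1(t),X_2(t),\dots)$ together with the de Finetti limit, ensuring that the block containing the extremal level has asymptotically positive frequency.
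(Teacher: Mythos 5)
Your overall route is the same as the paper's: with $X(0)=\delta_0$ every lineage starts at the origin, so $\rho(t)$ is identified with the maximal dislocation $H^t(0,t)$, and the corollary is to be deduced from the local right modulus of continuity at $s=0$. The identification itself is sound, but the delicacy you flag (that the extremal particle genuinely lies in the closed support) is resolved in the paper by Lemma \ref{le:ut}, which gives $\{X_i(t),i=1,2,\ldots\}\subseteq S(X(t))$ simultaneously for \emph{all} $t\in[0,T]$ a.s., proved via the strong Markov property and rational-ball stopping times. Your proposed substitute --- exchangeability forcing the block containing the extremal level to have asymptotically positive frequency --- is neither what is needed nor clearly true; the fixed-time statement does follow from de Finetti (a conditionally i.i.d.\ sample lies in the support of its directing measure), but Corollary \ref{co:rho} is an assertion about all small $t$ at once, so the uniform-in-$t$ version of the lemma is what must be invoked.

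The genuine gap is your conversion of the $\limsup$ in Theorem \ref{co:modu} into the limit asserted by the corollary. The upper half is unproblematic: the a.s.\ bound $\limsup_{\ep\to0+}H^{\ep}(0,\ep)/h(\ep)\le\sqrt{2/(\be-1)}$ does yield $\rho(t)\le c\,h(t)$ for every $c>\sqrt{2/(\be-1)}$ and all small $t$ (this is exactly how the paper uses Corollary \ref{co:suppor} \ref{co:localright}). But the lower half, $\liminf_{t\to0+}\rho(t)/h(t)\ge\sqrt{2/(\be-1)}$, cannot be extracted from a $\limsup$ equality, and the mechanism you propose --- ``the extremal displacement grows monotonically in a controlled way'', a ``monotone/accumulative structure at $s=0$'' --- fails: $\rho$ is not monotone, the support can recede, and the particle realizing $\rho(s)$ may leave no descendants alive at a later time $t$, so extremality does not propagate forward. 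What the paper actually does is return to the quantitative estimate inside the proof of Lemma \ref{th:localright2}: for $c<\sqrt{2/(\be-1)}$ the failure probability $\mathbb{P}_{\delta_0}\bigl(H^{2^{-n}}(0,2^{-n})\le c\,h(2^{-n})\bigr)$ is bounded by a superexponentially small term plus $O(2^{-n(1-2\al^{*})})$ with $\al^{*}<1/2$, which is summable in $n$; Borel--Cantelli then gives the lower bound along a sufficiently fine grid of times, and the already-established upper modulus (Corollary \ref{co:suppor} \ref{co:global}, \ref{co:localright}) interpolates from grid times to all small $t$ at the cost of an arbitrarily small loss in the constant. Without this Borel--Cantelli-plus-interpolation step, your argument proves only $\limsup_{t\to0+}\rho(t)/h(t)=\sqrt{2/(\be-1)}$, not the limit, which is the entire content of the corollary beyond Theorem \ref{co:modu}.
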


\begin{remark}
It is easy to show that the Beta$(2-\be,\be)$ coalescent with $\be\in(1,2]$  satisfies Condition \ref{con2}. Moreover, the Beta$(2-\be,\be)$ coalescent with $\be\in(1,2)$ satisfies  Conditions \ref{eq:lalow} and \ref{eq:laup} for the same $\beta$. Therefore, all the results in Theorems \ref{co:moduglobal}, \ref{co:modu} and Corollaries \ref{co:suppor}-\ref{co:rho} hold for
the Beta$(2-\beta,\beta)$-Fleming-Viot process with $\be\in(1,2]$.
\end{remark}

%%%%%%%%%%%%%%%%%%%%%%%%%%%%%%%%%%%%%%%%%%%%%%%%%%%%%%%%%%%%%%%%%%%%%%%%%%%%%%%%%%%%%%%%%%%%%%%%%%%%%%%%%
%%%%%%%%%%%%%%%%%%%%%%%%%%%%%%%%%%%%%%%%%%%%%%%%%%%%%%%%%%%%%%%%%%%%%%%%%%%%%%%%%%%%%%%%%%%%%%%%%%%%%%%%%%%
\section{Estimates, proofs and applications}\label{sec:result}
In this section, we first carry out some estimations on  the $\La$-coalescent, and then recall some known results in the literature. After that we prove Theorems \ref{co:moduglobal} and \ref{co:modu}
concerning both the global and local sharp modulus of continuities for the ancestry process recovered from the lookdown representation. Finally, the  modulus of continuity results (Corollaries \ref{co:suppor}-\ref{co:rho}) related to the $\Lambda$-Fleming-Viot support process are verified.

\subsection{Some estimates on the number of blocks of the $\La$-coalescent}
We begin with recalling an equivalent condition for the $\La$-coalescent to come down from infinity (see Bertoin and Le Gall \cite{BerLeGa06}). Let
\beqlb\label{eq:comingdown}
\psi(q)\ar:=\ar\int_{[0,1]}(e^{-qx}-1+qx)x^{-2}\Lambda(dx)\cr
\ar=\ar\La(\{0\})\frac{q^2}{2}+\int_{(0,1]}(e^{-qx}-1+qx)x^{-2}\Lambda(dx),
\eeqlb
then
\beqnn
\sum_{n=2}^{\infty}\gamma_n^{-1}<\infty \text{\,\, if and only if \,\,} \int_a^\infty \frac{dq}{\psi(q)}<\infty,
\eeqnn
where the integral is finite for some (and then for all) $a>0$.
If the associated $\La$-coalescent comes down from infinity, define
\beqnn
u(t):=\int_t^{\infty}\frac{dq}{\psi(q)},\,\,\,\,t>0
\eeqnn
and its c\`adl\`ag inverse
\beqlb\label{eq:v}
v(t):=\inf\left\{s>0:\int_s^{\infty}\frac{dq}{\psi(q)}<t\right\},\,\,\,\,t>0.
\eeqlb
Berestycki et al. \cite{BBL} showed that $\lim_{t\rightarrow 0}N(t)/v(t)=1$ almost surely conditional on $\La([0,1))=1$. A similar result  also holds for any $\La$ satisfying $\La([0,1))<\infty$ by adjusting the term by a scale coefficient. Some specific estimations on $v(t)$ are derived in the following proposition for later use. %The proof is deferred to the Appendix.

\begin{proposition}\label{pro:v}
Given any finite measure $\La$ on the unit interval, we have
\beqlb\label{eq:v2}
v(t)\geq{2}\(\La\([0,1]\)\)^{-1}t^{-1}\text{\,\,\,\,for all\,\,\,\,}t> 0.
\eeqlb
Moreover, if
$\La(\{0\})=0$ and Condition \ref{eq:laup} %from below
holds, then there exists a constant $C(A,\ep,\be)$ such that
\beqlb\label{eq:v1}
v(t)\geq  C(A,\ep,\be) t^{-{1}/{(\be-1)}} \text{\,\,\,\,for all\,\,\,small $t> 0$}.
\eeqlb
\end{proposition}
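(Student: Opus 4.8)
The plan is to control $v$ through the growth rate of $\psi$. Since $v$ is the c\`adl\`ag inverse of the decreasing function $u(t)=\int_t^\infty \psi(q)^{-1}\,dq$, any upper bound of the form $\psi(q)\le g(q)$ with $g$ decreasing turns into a lower bound for $u$, and then for $v$ by inversion. The only elementary input needed is the pair of inequalities $0\le e^{-y}-1+y\le \tfrac12 y^2$ and $e^{-y}-1+y\le y$ for $y\ge0$, both immediate from the convexity and monotonicity of $y\mapsto e^{-y}-1+y$. Throughout I may assume the coalescent comes down from infinity, since otherwise $\int_s^\infty \psi(q)^{-1}\,dq=\infty$ for every $s$, so $v\equiv\infty$ and both claimed inequalities hold trivially.

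For (\ref{eq:v2}) I apply $e^{-qx}-1+qx\le\tfrac12(qx)^2$ inside the defining integral of $\psi$; the factor $x^{-2}$ cancels, giving $\psi(q)\le \tfrac12\Lambda([0,1])\,q^2$ for all $q>0$. Integrating $\psi^{-1}$ from $t$ to $\infty$ then yields $u(t)\ge \frac{2}{\Lambda([0,1])}\,t^{-1}$. I finish by inverting: because $u(s)\ge \frac{2}{\Lambda([0,1])}s^{-1}=:g(s)$ with $g$ strictly decreasing, the set $\{s>0:u(s)<t\}$ is contained in $\{s>0:g(s)<t\}=\big(\frac{2}{\Lambda([0,1])}t^{-1},\infty\big)$, whence $v(t)=\inf\{s:u(s)<t\}\ge \frac{2}{\Lambda([0,1])}t^{-1}$.

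For (\ref{eq:v1}) the same scheme works with exponent $\beta$ in place of $2$; the extra work is a sharper upper bound $\psi(q)\le Kq^\beta$ valid for $q\ge1$. I split $\psi(q)$ at $\epsilon$. On $(0,\epsilon]$ I use Condition \ref{eq:laup}, $\Lambda(dx)\le Ax^{1-\beta}\,dx$, together with the substitution $y=qx$:
\begin{equation*}
\int_{(0,\epsilon]}(e^{-qx}-1+qx)x^{-2}\Lambda(dx)\le A\,q^{\beta}\int_0^{q\epsilon}(e^{-y}-1+y)\,y^{-1-\beta}\,dy\le A C_\beta\, q^\beta,
\end{equation*}
where $C_\beta:=\int_0^\infty(e^{-y}-1+y)y^{-1-\beta}\,dy$. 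On $(\epsilon,1]$ I use $e^{-qx}-1+qx\le qx$ and the finiteness of $\Lambda$ to bound the integral by $\epsilon^{-1}\Lambda((\epsilon,1])\,q\le \epsilon^{-1}\Lambda((\epsilon,1])\,q^\beta$ for $q\ge1$. Adding the two pieces gives $\psi(q)\le Kq^\beta$ with $K=AC_\beta+\epsilon^{-1}\Lambda((\epsilon,1])$. Integrating $\psi^{-1}$ over $[s,\infty)$ for $s\ge1$ then produces $u(s)\ge \frac{1}{K(\beta-1)}s^{-(\beta-1)}$, and inverting exactly as before (the threshold now being $s>(K(\beta-1)t)^{-1/(\beta-1)}$, which exceeds $1$ for all small $t$) yields $v(t)\ge C(A,\epsilon,\beta)\,t^{-1/(\beta-1)}$ with $C(A,\epsilon,\beta)=(K(\beta-1))^{-1/(\beta-1)}$.

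The main obstacle is the finiteness of $C_\beta$, which is precisely where $\beta\in(1,2)$ is essential: near $y=0$ the integrand behaves like $\tfrac12 y^{1-\beta}$, integrable because $1-\beta>-1$, while near $y=\infty$ it behaves like $y^{-\beta}$, integrable because $\beta>1$. A secondary point requiring care is the inversion bookkeeping: I must restrict to $q\ge1$ (and correspondingly to small $t$, so that $v(t)\ge1$) before replacing $q$ by $q^\beta$ and before the power-law bound for $\psi$ becomes valid, and I must verify that the monotonicity of the functions involved makes the inclusion $\{u<t\}\subseteq\{g<t\}$ legitimate.
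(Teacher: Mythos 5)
Your proof is correct and follows essentially the same route as the paper's: bound $\psi(q)\le\tfrac12\Lambda([0,1])q^2$ via $e^{-y}-1+y\le y^2/2$ for (\ref{eq:v2}), and for (\ref{eq:v1}) split $\psi$ at $\ep$, use Condition \ref{eq:laup} with the substitution $y=qx$ to get the $Aq^{\beta}$ piece with the finite constant $\int_0^\infty(e^{-y}-1+y)y^{-1-\beta}\,dy$, then integrate $\psi^{-1}$ and invert $u$. Your treatment of the $(\ep,1]$ tail (the explicit bound $e^{-qx}-1+qx\le qx$ giving an $O(q)$ term absorbed into $q^\beta$ for $q\ge1$) and your inversion bookkeeping are in fact slightly more careful than the paper's terse "continuous on $(\ep,1]$, hence finite, hence $\psi(q)\le\widetilde C q^\beta$ for $q$ large" argument, but the substance is identical.
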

\begin{proof}
Clearly, $v(t)$ is a  non-increasing function.
Since
$e^{-qx}\leq 1-qx+q^2x^2/2$
for $x>0$, then
\beqnn
\psi(q)\ar \leq\ar\La(\{0\})\frac{q^2}{2}+\frac{\La\((0,1]\)q^2}{2}=\frac{\La\([0,1]\)q^2}{2}.
\eeqnn
%$\psi(q)\,\leq\,\La(\{0\}){q^2}/{2}+{\La\((0,1]\)q^2}/{2}={\La\([0,1]\)q^2}/{2}.$
Hence,
\beqnn
u(t)\geq\int_t^{\infty}\frac{2dq}{\La\([0,1]\)q^2}=\frac{2}{\La\([0,1]\)t}.
\eeqnn
Thus, Equation (\ref{eq:v2}) follows from (\ref{eq:v}).

If $\La(\{0\})=0$ and Condition \ref{eq:laup} holds for $1<\beta<2$, then there exist constants $A>0$  and $\ep\in(0,1]$ such that for all $x\in(0,\ep]$,
%\beqnn
$\La(dx)\leq Ax^{1-\be}dx,$
%\eeqnn
which combined with (\ref{eq:comingdown}) gives that
{\small\beqlb\label{eq:psi1}
\psi(q)\ar=\ar\int_{(0,\ep]}\(e^{-qx}-1+qx\)x^{-2}\La(dx)+\int_{(\ep,1]}\(e^{-qx}-1+qx\)x^{-2}\La(dx)\cr
\ar\leq\ar A \int_{(0,\ep]}\(e^{-qx}-1+qx\)x^{-1-\be}dx+\int_{(\ep,1]}\(e^{-qx}-1+qx\)x^{-2}\La(dx)\cr
\ar=\ar A q^{\be}\int_{(0,q\ep]}\(e^{-y}-1+y\)y^{-1-\be}dy+\int_{(\ep,1]}\(e^{-qx}-1+qx\)x^{-2}\La(dx).
\eeqlb
}
Moreover, as $y$ tends to $0$, $e^{-y}-1+y\sim y^2/2$, where $\sim$ means that the ratio of the two sides tends to $1$ as $y\downarrow 0$. As a result,
$\(e^{-y}-1+y\)y^{-1-\be}\sim y^{1-\be}/2,$
which is integrable in a right neighborhood of $0$ due to $1<\be <2$. As $y$ tends to $\infty$, we see that functions
$e^{-y}y^{-1-\be}$, $y^{-1-\be}$ and $y^{-\be}$ are all integrable in a neighborhood of $\infty$. Therefore,
\beqnn
\left|\int_{(0,q\ep]}\(e^{-y}-1+y\)y^{-1-\be}dy\right|\ar\leq\ar \int_{(0,\infty)}\(e^{-y}-1+y\)y^{-1-\be}dy<\infty.
\eeqnn
Observe  that $\(e^{-qx}-1+qx\)x^{-2}$ is continuous on $(\ep,1]$ and $\La((\ep,1])<\infty$, which implies
\beqnn
\left|\int_{(\ep,1]}\(e^{-qx}-1+qx\)x^{-2}\La(dx)\right|<\infty.
\eeqnn
Thus, there exists some constant $\widetilde{C}(A,\ep,\be)$ so that for $q$ large enough, $\psi(q)\leq \widetilde{C}( A,\ep,\be)q^{\be}$. This fact, combined with (\ref{eq:v}), gives
\beqnn
v(t)\geq \(\widetilde{C}(A,\ep,\be)\)^{-1/(\be-1)}t^{-1/(\be-1)}
\eeqnn
as $t$ tends to $0$.
Setting ${C}(A,\ep,\be)=\(\widetilde{C}(A,\ep,\be)\)^{-1/(\be-1)}$ we arrive at the desired result. The proof is complete.
\end{proof}

In the following we state a short time estimate on the lower bound for the number of blocks in a $\Lambda$-coalescent. It is an adaption of Berestycki et al. \cite[Proposition 12]{BBL}. %and the proof is deferred to the Appendix.

\begin{proposition}\label{pro:ancestor}
Let $\La$ be a finite measure on the unit interval, $\Pi$ be the associated $\Lambda$-coalescent and $N(s)$ be the number of blocks in $\Pi(s)$ for $s>0$. Then for any $\al^{*}\in(0,1/2)$, we have
\beqlb\label{eq:N2}
\mathbb{P}\(N\(s\)\geq e^{-24  s^{\al^{*}}}{v(s)}\)\geq 1- O\(s^{1-2\al^{*}}\)\quad\text{as}\quad s\to 0,
\eeqlb
 where $O\(s^{1-2\al^{*}}\)$ is the term with the same order as $s^{1-2\al^{*}}$ and $v(s)$ is defined by (\ref{eq:v}).
\end{proposition}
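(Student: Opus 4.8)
The plan is to establish the multiplicative lower bound (\ref{eq:N2}) by adapting the method of Berestycki et al. \cite{BBL}. The deterministic profile $v$ from (\ref{eq:v}) is the fluid limit of the block-counting process $N$: it solves $v'(t)=-\psi(v(t))$ (equivalently $u(v(t))=t$), and \cite{BBL} give $N(t)/v(t)\to 1$. Since (\ref{eq:N2}) is precisely the statement that $\log N(s)-\log v(s)\ge -24\,s^{\al^*}$, I would track the discrepancy $Z_s:=\log N(s)-\log v(s)$, normalised so that $Z_s\to0$ as $s\to0$, and show that $Z_s$ is very unlikely to be strongly negative. The whole problem then reduces to a first- and second-moment estimate for $\log N(s)$ followed by Chebyshev's inequality: a variance of order $s$, weighed against the relative scale $s^{\al^*}$, is exactly what produces the error $O(s^{1-2\al^*})$.

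First I would record the generator of $N$, namely $\mathcal{L}g(n)=\sum_{k=2}^{n}\binom{n}{k}\lambda_{n,k}\[g(n-k+1)-g(n)\]$, so that $\mathcal{L}\log(n)=\sum_{k=2}^{n}\binom{n}{k}\lambda_{n,k}\log\frac{n-k+1}{n}$, and use Dynkin's formula to split $\log N(s)$ into its compensator and a martingale. Writing $\gamma_n=\int_{[0,1]}x^{-2}\[nx-1+\(1-x\)^n\]\La(dx)$ and noting $\gamma_n\le\psi(n)$ with $\gamma_n\sim\psi(n)$, the compensator of $\log N$ is close to $\int_0^s(\log v)'(r)\,dr$, because $\mathcal{L}\log(n)\approx-\gamma_n/n\approx-\psi(n)/n$ matches $(\log v)'=-\psi(v)/v$ once $N(r)\asymp v(r)$; the only systematic discrepancy is the downward, concavity (Jensen) correction, which is nonpositive and of total size $O(s)$, so that $\mathbb{E}[Z_s]\ge -Cs$. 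Evaluating the rates at $N(r)$ uses a two-sided a priori bound $N(r)\asymp v(r)$ on $[0,s]$, which is available from \cite{BBL}.

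Next I would bound the variance through the predictable quadratic variation $\int_0^s\sum_{k}\binom{N(r)}{k}\lambda_{N(r),k}\(\log\frac{N(r)-k+1}{N(r)}\)^2dr$. For bounded collisions the summand is comparable to $\gamma^{(2)}_{N(r)}/N(r)^2$ with $\gamma_n^{(2)}=\sum_{k}(k-1)^2\binom{n}{k}\lambda_{n,k}$, and from the identity $\gamma_n^{(2)}=\int_{[0,1]}x^{-2}\[n^2x^2-nx^2-nx+1-\(1-x\)^n\]\La(dx)$ together with $1-\(1-x\)^n\le nx$ one gets the clean uniform bound $\gamma_n^{(2)}\le\La([0,1])\,n^2$; hence this part of the integrand is $O(1)$ and contributes $O(s)$. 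Once the large collisions are also controlled (see below), this gives $\mathrm{Var}(\log N(s))=O(s)$, and, absorbing the mean bias $O(s)=o(s^{\al^*})$, Chebyshev's inequality yields
\beqnn
\mathbb{P}\(Z_s<-24\,s^{\al^*}\)\le\mathbb{P}\(\,\big|\log N(s)-\mathbb{E}\log N(s)\big|>12\,s^{\al^*}\,\)\le\frac{\mathrm{Var}(\log N(s))}{144\,s^{2\al^*}}=O\(s^{1-2\al^*}\).
\eeqnn
On the complementary event $Z_s\ge-24\,s^{\al^*}$, i.e. $N(s)\ge e^{-24\,s^{\al^*}}v(s)$, which is (\ref{eq:N2}); the passage from $\La([0,1))=1$ to a general finite $\La$ is handled by the scale coefficient noted before the proposition.

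The step I expect to be the main obstacle is the uniform control of the large collisions, i.e. the terms with $k$ comparable to $n$, for which $\log\frac{n-k+1}{n}$ is of order $\log n$ rather than $O((k-1)/n)$. These enter both the lower bound for the drift of $Z$ (one must rule out $N$ coming down faster than $v$ by more than the allotted slack) and the quadratic variation, and one must show they are negligible over the short window $[0,s]$; this, together with propagating the a priori estimate $N(r)\asymp v(r)$, is the technical heart of the argument, and it is exactly here that the machinery of \cite[Proposition 12]{BBL} is invoked, from which the explicit constant $24$ and the exponent $1-2\al^*$ are inherited.
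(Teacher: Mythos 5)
Your proposal correctly identifies where the exponent $1-2\al^{*}$ comes from (a variance of order $s$ measured against a threshold $s^{\al^{*}}$, via Chebyshev), and your identity and bound for $\gamma_n^{(2)}$ are sound. But as a proof it is circular at its center. The drift and variance estimates require the two-sided bound $N(r)\asymp v(r)$ to hold on all of $[0,s]$ with probability $1-O(s^{1-2\al^{*}})$; the almost-sure statement $N(t)/v(t)\to 1$ of \cite{BBL} carries no rate, and its quantitative version \emph{is} \cite[Proposition 12]{BBL}. So either your moment computation assumes what is to be proved, or --- as you concede in your final paragraph --- you invoke Proposition 12 anyway, in which case the Dynkin/Chebyshev machinery is superfluous: the conclusion follows from that proposition directly, and the real content of the proof is the bookkeeping needed to apply it, which your proposal omits. (There is also a technical obstruction to your setup as written: starting from dust, $N(0+)=\infty$ and $\log v(0+)=\infty$, so Dynkin's formula cannot be applied to $\log N$ from time $0$; one must work with a stopped process, which is exactly why the stopping time $\mathbb{T}_{n_0}$ appears in \cite{BBL}.)

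The two steps that are missing, and that constitute the paper's actual proof, are these. First, \cite[Proposition 12]{BBL} controls $\sup_{t\in(0,s\wedge\mathbb{T}_{n_0}]}\left|\log\(N(t)/v(t)\)\right|$, i.e. it controls the process only up to the stopping time $\mathbb{T}_{n_0}$; to obtain a statement about $N(s)$ itself one must remove the stopping. This is done by noting that $v$ is non-increasing, $v(s)\to\infty$ and $e^{-24 s^{\al^{*}}}\to 1$, so on the good event $N(s\wedge\mathbb{T}_{n_0})\geq e^{-24 s^{\al^{*}}}v(s\wedge\mathbb{T}_{n_0})\geq e^{-24 s^{\al^{*}}}v(s)>n_0$ for all small $s$, which forces $\mathbb{T}_{n_0}>s$ and hence $N(s\wedge\mathbb{T}_{n_0})=N(s)$. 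Second, Proposition 12 is available under the restriction $\mathrm{supp}(\La)\subset[0,1/4]$; the passage to a general finite $\La$ on $[0,1]$ is \emph{not} the mass-normalization ``scale coefficient'' you cite (that remark concerns $\La([0,1))=1$ versus $\La([0,1))<\infty$), but a genuinely separate argument. The paper does it by splitting the Poisson driving measure at $x=1/4$: the number $Z(s)$ of reproduction events with $x>1/4$ by time $s$ is Poisson with finite mean $s\int_{(1/4,1]}x^{-2}\La(dx)$, so $\mathbb{P}(Z(s)\geq 1)=O(s)=O(s^{1-2\al^{*}})$, while on $\{Z(s)=0\}$ the coalescent coincides up to time $s$ with the one driven by $\La$ restricted to $[0,1/4]$, to which the restricted-support estimate applies. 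Without these two steps your argument does not yield (\ref{eq:N2}).
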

\begin{proof}%[{Proof of Proposition \ref{pro:ancestor}}]
We start the proof with the assumption that supp$\(\La\)\subset [0,1/4]$. It follows from \cite[Proposition 12]{BBL} that
\beqnn
\mathbb{P}\(\sup_{t\in(0,s\wedge\mathbb{T}_{n_0}]}\left|\log\frac{N(t)}{v(t)}\right|\leq 24 \times s^{\al^{*}}\)
\ar\geq\ar 1-O\(s^{1-2\al^{*}}\),
\eeqnn
where $\al^{*}\in(0,1/2)$, $n_0$ is a positive integer given in \cite[Lemma 19]{BBL} and $\mathbb{T}_{n_0}$ is the first time that the number of blocks is less than or equal to $n_0$. Then we have
\beqnn
\mathbb{P}\(\log\frac{N\(s\wedge\mathbb{T}_{n_0}\)}{v(s\wedge\mathbb{T}_{n_0})}\geq -24 \times s^{\al^{*}}\)
\ar\geq\ar 1-O\(s^{1-2\al^{*}}\).
\eeqnn
Equivalently,
\beqlb\label{eq:N}
\mathbb{P}\({N\(s\wedge\mathbb{T}_{n_0}\)}\geq e^{-24 \times s^{\al^{*}}}{v(s\wedge\mathbb{T}_{n_0})}\)
\ar\geq\ar 1-O\(s^{1-2\al^{*}}\).
\eeqlb
Observe that $N\(s\wedge\mathbb{T}_{n_0}\)>n_0$ if and only if $N\(s\)>n_0$.
By (\ref{eq:v2}), $v(s)\rightarrow+\infty$ as $s\downarrow 0$. Combining this result with the fact that $\lim_{s\rightarrow 0}e^{-24 \times s^{\al^{*}}}=1$, we see that ${N\(s\wedge\mathbb{T}_{n_0}\)}>n_0$ for all small enough $s>0$.
Then as $s\rightarrow 0$, (\ref{eq:N2}) is derived in case of  supp$\(\La\)\subset [0,1/4]$.
%\beqlb\label{eq:N2}
%\mathbb{P}\({N\(s\)}\geq e^{-24 \times s^{\al^{*}}}{v(s)}\)
%\ar\geq\ar 1-O\(s^{1-2\al^{*}}\).
%\eeqlb

For general finite measure $\La$ on the unit interval, recall that  $\La=\La(\{0\})\delta_0+\La_0$. The collisions for the $\Lambda$-coalescent are due to a binary merger determined by $\La(\{0\})\delta_0$ and a multiple merger determined by $\La_0$.
The multiple merger can be constructed by Poisson point processes. Let $\Psi_1$ and $\Psi_2$ be independent Poisson point processes on $(0,\infty)\times \{0, 1\}^\infty$ with intensity $dtL_1(d\xi)$ and $dtL_2(d\xi)$, respectively, where
	$$L_1(d\xi)=\int_{(0,1/4]} x^{-2}\La(dx)Q_{x}(d\xi)\quad\text{and}\quad L_2(d\xi)=\int_{(1/4,1]} x^{-2}\La(dx)Q_{x}(d\xi).$$
The total mass of $L_2$ is $\int_{(1/4,1]}x^{-2}\La(dx)<\infty$. Let $Z(s)$ be the number of points in $\Psi_2$ until time $s$. Then
\beqlb\label{eq:N5}
\mathbb{P}\(Z(s)=0\)=e^{-s\int_{(1/4,1]}x^{-2}\La(dx)}.
\eeqlb
As $s$ tends to $0$,
\beqnn
\ar\ar\mathbb{P}\({N\(s\)}\leq e^{-24 \times s^{\al^{*}}}{v(s)}\)\cr
\ar\ar\quad=\mathbb{P}\({N\(s\)}\leq e^{-24 \times s^{\al^{*}}}{v(s)},Z(s)\geq1\)
+\mathbb{P}\({N\(s\)}\leq e^{-24 \times s^{\al^{*}}}{v(s)},Z(s)=0\)\cr
\ar\ar\quad\leq \mathbb{P}\(Z(s)\geq1\)+\mathbb{P}\({N\(s\)}\leq e^{-24 \times s^{\al^{*}}}{v(s)},Z(s)=0\).
\eeqnn
The estimation for the second term in the above line can be attributed to the case of  supp$\(\La\)\subset [0,1/4]$.
This fact, combined with (\ref{eq:N5}), yields
\beqnn
\mathbb{P}\({N\(s\)}\leq e^{-24 \times s^{\al^{*}}}{v(s)}\)\ar\leq\ar 1-e^{-s\int_{(1/4,1]}x^{-2}\La(dx)}+O\(s^{1-2\al^{*}}\)\cr
\ar\leq \ar{s\int_{(1/4,1]}x^{-2}\La(dx)}+O\(s^{1-2\al^{*}}\)\cr
\ar=\ar O\(s^{1-2\al^{*}}\).
\eeqnn
The proof is complete.
\end{proof}

\subsection{Previous results in the literature}
We begin with recalling a relatively loose upper modulus of continuity result in \cite{LZ2} for the ancestry processes recovered from the lookdown representation of a class of $\Lambda$-Fleming-Viot processes with Brownian spatial motion.
\begin{lemma}[cf. Theorem 4.1 and Remark 5.4 in \cite{LZ2}]\label{le:mod1}
Under Condition \ref{con1} and for any $T>0$, there exist a positive random
variable $\theta^{*}\equiv\theta^{*}\(T,d,\be\)<1$ and a constant $C^{*}\equiv
C^{*}(d,\be)$ such that $\mathbb{P}$-a.s. for all $r,s\in [0,T]$
satisfying $0<s-r\leq\theta^{*} $, we have
\begin{eqnarray}\label{eq:mod1}
\begin{split}
H^s\(r,s\) \,\leq\, C^{*}\sqrt{\(s-r\)\log\(1/\(s-r\)\)}.
\end{split}
\end{eqnarray}
Further, there exist positive
constants $C_6= C_6(T, d,\be)$ and $C_7= C_7(d,\be)$ such
that for $\epsilon>0$ small enough
\beqlb\label{eq:theta}
\mathbb{P} (\theta^{*} \leq\epsilon)\,\leq\, C_6 {\epsilon}^{C_7}.
\eeqlb

\end{lemma}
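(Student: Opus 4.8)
The plan is to bound $H^s(r,s)$ by the maximal displacement of the finitely many ancestral Brownian lineages over $[r,s]$, using that the number of such lineages is controlled by the coming down from infinity of the dual coalescent (via Condition \ref{con1}), and then to run Paul L\'evy's dyadic chaining argument to obtain a bound uniform over all admissible pairs $(r,s)$. First I would exploit the structure of the lookdown representation: along any single lineage traced backward from a particle alive at $s$, the spatial location evolves continuously and, between the lookdown/birth events (at which the location is inherited without a spatial jump), moves as a $d$-dimensional Brownian motion. Hence each lineage over $[r,s]$ is distributed as a Brownian path of duration $s-r$, and $H^s(r,s)$ is the maximum, over the ancestral tree rooted at the $N(r,s)$ ancestors present at time $r$, of the root-to-leaf displacement. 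The essential inputs are then (i) a Gaussian tail estimate $\mathbb{P}(\sup_{u\le\delta}|B(u)|\ge c\sqrt{\delta\log(1/\delta)})\le C\delta^{c^2/(2d)}$ (schematically) for the displacement of one lineage over a time span $\delta$, and (ii) a bound on the number of lineages that must be union-bounded.

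For (ii) I would use Condition \ref{con1}. Since $\mathbb{E}(\mathbb{T}_m)=O(m^{-(\be-1)})$ and $N(r,s)$ has the law of the number of blocks of the coalescent after time $s-r$, Markov's inequality gives $\mathbb{P}(N(r,s)>m)\le \mathbb{E}(\mathbb{T}_m)/(s-r)=O(m^{-(\be-1)}/(s-r))$, so that at a time span $\delta$ behind $s$ the number of lineages is at most of order $\delta^{-1/(\be-1)}$ with overwhelming probability. Because there are infinitely many particles at the recovery time $s$ itself, the decisive step is to handle the accumulation of lineages as one approaches $s$. I would decompose $[r,s]$ into a geometric sequence of subintervals $[s_k,s_{k+1}]$ with $s-s_k=2^{-k}(s-r)$, so that on the $k$th subinterval the number of lineages is finite and of order $(2^{-k}(s-r))^{-1/(\be-1)}$; union-bounding the Gaussian tail over these lineages and summing the resulting per-subinterval displacements---a convergent series dominated by its first term---produces the bound $C^{*}\sqrt{(s-r)\log(1/(s-r))}$, with $C^{*}$ depending only on $d$ and $\be$ through the Gaussian exponent and the coming-down rate.

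To make the estimate simultaneous in $(r,s)$ I would place the endpoints on the dyadic grid $\{kT2^{-n}\}$ and run L\'evy's chaining: after the two union bounds above, the probability that some grid pair at scale $2^{-n}$ violates the bound is polynomially small in $2^{-n}$ and summable in $n$; Borel--Cantelli then yields an $\mathbb{P}$-a.s. finite random scale $\theta^{*}$ below which (\ref{eq:mod1}) holds for all $r,s\in[0,T]$ with $0<s-r\le\theta^{*}$, and the chaining interpolates from grid points to arbitrary $r,s$ at the cost of an absolute constant absorbed into $C^{*}$. The tail bound (\ref{eq:theta}) comes out of the same estimate: the event $\{\theta^{*}\le\ep\}$ forces a violation at some scale finer than the one determined by $\ep$, whose total probability is bounded by the tail of the summable series, giving $\mathbb{P}(\theta^{*}\le\ep)\le C_6\ep^{C_7}$.

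The main obstacle throughout is the blow-up of the number of lineages near the recovery time $s$, where infinitely many particles are present. It is precisely the balance, in the geometric decomposition, between the number of lineages growing like $\delta^{-1/(\be-1)}$ and the Brownian displacement shrinking like $\sqrt{\delta\log(1/\delta)}$ that both forces the use of Condition \ref{con1} and fixes the order of the modulus; keeping the contributions of the fine subintervals summable, rather than merely individually small, is the delicate point that must be managed carefully to obtain a \emph{single} constant $C^{*}$ valid for all scales.
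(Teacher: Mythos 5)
This lemma is not proved in the paper at all: it is quoted verbatim from \cite{LZ2} (Theorem 4.1 and Remark 5.4 there), and your proposal reconstructs essentially the strategy used in \cite{LZ2}, which is also the template for the paper's own sharper estimates in Section \ref{sec:result} (Lemma \ref{le:mod} and Proposition \ref{le:upp}): backward lineages in the lookdown representation are continuous and Brownian in law, the number of distinct lineages a time span $\delta$ behind the recovery time is finite and controlled via coming down from infinity, a geometric decomposition of the interval tames the blow-up of the lineage count at the recovery time, and a union bound over lineages plus Borel--Cantelli plus L\'evy chaining (using, implicitly, the monotonicity $H^t(a,b)\le H^{b}(a,b)$ for $a<b\le t$, which is what lets you pass from grid recovery times to arbitrary ones) yields the uniform statement and the polynomial tail for $\theta^{*}$.

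One step, as written, is incorrect and needs repair, although the repair is routine and is absorbed into $C^{*}$. Markov's inequality gives $\mathbb{P}\(N(r,s)>m\)\le \mathbb{E}\(\mathbb{T}_m\)/(s-r)=O\(m^{-(\beta-1)}/\delta\)$ with $\delta=s-r$; at your threshold $m\sim \delta^{-1/(\beta-1)}$ this bound is $O(1)$, so the claim that the lineage count is of order $\delta^{-1/(\beta-1)}$ ``with overwhelming probability'' does not follow from Condition \ref{con1}. You must instead take $m\sim\delta^{-a}$ with $a(\beta-1)-1>1$, i.e.\ $a>2/(\beta-1)$, so that the per-pair failure probability $O\(\delta^{a(\beta-1)-1}\)$ survives multiplication by the $\sim T\delta^{-1}$ grid pairs at scale $\delta$ and remains summable over dyadic scales; correspondingly the Gaussian tail must then beat $\delta^{-a-1}$, which forces $C^{*}$ to be at least of order $\sqrt{2(a+1)}$ (times a dimensional factor, since your coordinatewise sup bound loses a factor $d$ in the exponent compared with the sharp tail of Lemma \ref{le:BM}). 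This trade-off is precisely why the constant recorded in Remark \ref{re:C} involves $3/(\beta-1)+1$ rather than anything close to $1/(\beta-1)$. Since Lemma \ref{le:mod1} only asserts the existence of some finite $C^{*}(d,\beta)$, your scheme goes through once this exponent is adjusted; but the balance you describe in your closing paragraph, with lineage count $\delta^{-1/(\beta-1)}$, is the one relevant to the sharp constants of Theorems \ref{co:moduglobal} and \ref{co:modu} (where it requires the finer block-count estimates of Propositions \ref{pro:v} and \ref{pro:ancestor}, not just Condition \ref{con1}), and it cannot be achieved by the Markov-inequality argument alone.
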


\begin{remark}\label{re:C}
It follows from  \cite[Remark 5.6]{LZ2} that
\beqnn
C^{*}\ar\geq\ar2\sqrt{2d\(3/(\be-1)+1\)}\Big(1+\sum_{\ell=1}^{\infty}\sqrt{2^{-2\ell+1}\ell}\Big)
\Big(1+\sum_{\ell=1}^{\infty}\sqrt{2^{-\ell+1}\ell}\Big)\cr
\ar\approx\ar19.53457\sqrt{2d\(3/(\be-1)+1\)}.
\eeqnn
If we make the estimation in the proofs of \cite[Lemma 5.2 and Theorem 4.1]{LZ2} more precise,
the lower bound for $C^*$ is strictly greater than
$(12+4\sqrt{2})\sqrt{2d(3/(\be-1)+1)}$.
In particular, $C^{*}>(24\sqrt{2}+16)\sqrt{d}$ if $\be=2$. Compared with the exact modulus of continuity of super-Brownian motion, whose support properties are similar to the Kingman-Fleming-Viot process,   the value of $C^{*}$ is far from optimal.
\end{remark}

\begin{remark}
Under Condition \ref{con1}, a one-sided modulus of continuity is proved for the $\Lambda$-Fleming-Viot support process at fixed time, and the support is shown to be almost surely compact  at all positive time; see  \cite[Theorems 4.2 and 4.3]{LZ2}.
\end{remark}

\begin{lemma}\label{le:BM}
For standard $d$-dimensional Brownian motion $\(B(t)\)_{t\geq 0}$, we have
\beqlb\label{eq:bm}
\mathbbm{c}_{1}(d)R^{d-2}e^{-R^2/2}\leq \mathbb{P}\(|B(1)|\geq R\)\leq \mathbbm{c}_2(d)R^{d-2}e^{-R^2/2}
\eeqlb
holds for  $R$ large enough, where $\mathbbm{c}_1(d)$ and $\mathbbm{c}_2(d)$ are constants depending on $d$.
\end{lemma}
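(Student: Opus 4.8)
The plan is to reduce the statement to an elementary one-dimensional tail estimate by exploiting the radial symmetry of the Gaussian law. Since $B(1)$ has density $(2\pi)^{-d/2}e^{-|x|^2/2}$ on $\mathbb{R}^d$, passing to spherical coordinates gives
$$\mathbb{P}\(|B(1)|\geq R\)=\frac{\sigma_{d}}{(2\pi)^{d/2}}\int_R^{\infty}r^{d-1}e^{-r^2/2}\,dr,$$
where $\sigma_d=2\pi^{d/2}/\Gamma(d/2)$ is the surface area of the unit sphere in $\mathbb{R}^d$. Writing $I(R)=\int_R^\infty r^{d-1}e^{-r^2/2}\,dr$, it then suffices to establish that $\tfrac12 R^{d-2}e^{-R^2/2}\leq I(R)\leq 2R^{d-2}e^{-R^2/2}$ for all large $R$, since the constant prefactor is absorbed into $\mathbbm{c}_1(d)$ and $\mathbbm{c}_2(d)$.

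The key step is a single integration by parts. Taking $u=r^{d-2}$ and $dv=re^{-r^2/2}\,dr$ (so $v=-e^{-r^2/2}$), the boundary term at infinity vanishes and I would obtain
$$I(R)=R^{d-2}e^{-R^2/2}+(d-2)J(R),\qquad J(R):=\int_R^\infty r^{d-3}e^{-r^2/2}\,dr.$$
I then record the two elementary facts that $J(R)\geq 0$ and, since $r^{-2}\leq R^{-2}$ on $[R,\infty)$, that $J(R)\leq R^{-2}I(R)$; the latter holds in every dimension.

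From here the two-sided bound follows by a short case split on the sign of $d-2$. If $d\geq2$, dropping the nonnegative term $(d-2)J(R)$ yields $I(R)\geq R^{d-2}e^{-R^2/2}$, while substituting $J(R)\leq R^{-2}I(R)$ gives $I(R)\bigl(1-(d-2)R^{-2}\bigr)\leq R^{d-2}e^{-R^2/2}$, hence $I(R)\leq 2R^{d-2}e^{-R^2/2}$ once $R$ is large enough that $(d-2)R^{-2}\leq \tfrac12$. If $d=1$, the term $(d-2)J(R)=-J(R)$ is nonpositive, so discarding it gives the upper bound $I(R)\leq R^{-1}e^{-R^2/2}$, whereas $J(R)\leq R^{-2}I(R)$ now yields $I(R)(1+R^{-2})\geq R^{-1}e^{-R^2/2}$ and thus $I(R)\geq \tfrac12 R^{-1}e^{-R^2/2}$ for $R\geq1$. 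In both cases the desired estimate holds for all large $R$, and setting $\mathbbm{c}_1(d)=\tfrac12\sigma_d(2\pi)^{-d/2}$ and $\mathbbm{c}_2(d)=2\sigma_d(2\pi)^{-d/2}$ finishes the argument.

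There is no substantial obstacle here, as the result is a classical Gaussian tail estimate; the work is entirely bookkeeping. The only points requiring care are tracking the dimensional constant $\sigma_d/(2\pi)^{d/2}$ and noticing that the sign of $d-2$ dictates which of the two inequalities comes for free from discarding a term and which requires the crude bound $J\leq R^{-2}I$ (the borderline case $d=2$ being the exact identity $I(R)=e^{-R^2/2}$). One could instead derive the same asymptotics by a Laplace-type analysis of $I(R)$, but the integration-by-parts route delivers the clean nonasymptotic inequalities with explicit constants, which is all that is needed in the sequel.
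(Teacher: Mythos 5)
Your proof is correct, but note that the paper does not actually prove this lemma: it treats \eqref{eq:bm} as a known Gaussian tail estimate and simply cites Dawson and Vinogradov \cite[Equation (3.9)]{Da94} and Dhersin and Le~Gall \cite[Lemma 3]{DL98}. So your contribution is a self-contained derivation where the paper offers only a pointer to the literature. Your route is the standard one and all steps check out: the polar-coordinate reduction with $\sigma_d=2\pi^{d/2}/\Gamma(d/2)$ is right (including $d=1$, where $\sigma_1=2$ recovers the two-sided normal tail); the integration by parts $I(R)=R^{d-2}e^{-R^2/2}+(d-2)J(R)$ has the correct sign on the boundary term; the bound $J(R)\leq R^{-2}I(R)$ holds in every dimension since $r^{-2}\leq R^{-2}$ on $[R,\infty)$; and your sign split on $d-2$ correctly identifies which inequality is free and which needs the crude bound, with $d=2$ being the exact identity $I(R)=e^{-R^2/2}$. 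The payoff of your approach is explicit nonasymptotic constants $\mathbbm{c}_1(d)=\tfrac12\sigma_d(2\pi)^{-d/2}$ and $\mathbbm{c}_2(d)=2\sigma_d(2\pi)^{-d/2}$ together with an explicit threshold (e.g.\ $(d-2)R^{-2}\leq\tfrac12$), which is more than the lemma demands; the paper's citation buys brevity at the cost of self-containedness. One cosmetic remark: since the statement only asserts the bounds ``for $R$ large enough'' with unspecified constants, you could have absorbed the case analysis by noting monotonicity and positivity of $I$, but your version is cleaner and actually quantifies where ``large enough'' begins.
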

This is an estimation for the tail  probability of standard $d$-dimensional Brownian motion.
It can be found in \cite[Equation (3.9)]{Da94} or \cite[Lemma 3]{DL98}.

\subsection{Modulus of continuities for the ancestry process}
We proceed to derive both the  global and local modulus of continuities for the ancestry process recovered from the lookdown representation of the $\Lambda$-Fleming-Viot process.
\subsubsection{Estimates on the maximal dislocation between two  fixed times that are close to each other}
Given any fixed time $t$ and a time a bit ahead of it, i.e. $t-\theta^nu$, the time interval is divided
into countably infinite many subintervals $\{[t-\theta^{k-1}u,t-\theta^{k}u]:k> n\}$ with their lengths obeying a geometric sequence.
Looking backwards in time, we can recover the ancestry process
at time $t$ from the lookdown representation. We first estimate the probability that the dislocation of the recovered ancestry process during a subinterval $[t-\theta^{k-1}u,t-\theta^{k}u]$ is larger than a bound (comparable to the interval length $\theta^{k-1}u(1-\theta)$), which is shown to be summable with respect to $k$. Then assuming that the maximal dislocation of the ancestry process during each subinterval $[t-\theta^{k-1}u,t-\theta^{k}u]$ with $k>n$ is always smaller than the designated bound, an upper bound for the maximal dislocation between $t$ and $t-\theta^n u$ is naturally derived by the sum of bounds on all subintervals where detailed arguments are carried out.

For any $\theta\in(0,1)$, $u\in(0,1]$, $c>0$ and $t>0$, let
\beqnn
A_k\ar \equiv\ar A_k(t,u,\theta,c)=\left\{H^{t}\({t-\theta^{k-1}u,t-\theta^{k}u}\)>ch\(\theta^{k-1}u(1-\theta)\)\right\}
\eeqnn
be the random event that dislocation of the recovered ancestry process during subinterval $[t-\theta^{k-1}u,t-\theta^{k}u]$ is larger than the designated bound.
Denote by
\beqlb\label{eq:B_n}
B_n\ar \equiv \ar B_n(t,u,\theta,c)=\bigcup_{k>n,\,t>\theta^{k-1}u}A_k(t,u,\theta,c).
\eeqlb
Note that $t>\theta^{k-1}u$ in the above union stresses that the value of $k$ is large enough to make $A_k(t,u,\theta,c)$ meaningful.
Observe that $B_n$ is decreasing  in $n$. Actually, $B_n$ is the random event that among all the ancestors at the grid times of those particles alive at time $t$, there exists at least two  consecutive ancestors  with their dislocation larger than the designated bound during a subinterval.

\begin{lemma}\label{le:mod}
Under Condition \ref{con2}, let $\theta$, $u$ and $t$ be as above and let $c>\sqrt{2/{(\be-1)}}$.
\begin{enumerate}
\item\label{le:moda} For any $\nu\in M_1(\mathbb{R}^d)$, there are two constants $C(c, d, \be, u,\theta)$ and $\bar{C}(C,u,\theta)$ such that
\beqnn
\mathbb{P}_{\nu}\(B_n\)\leq C(c,d,\be, u,\theta)\theta^{\(\frac{c^2}{2}-\frac{1}{{\be-1}}\)n}n^{\frac{2}{{\be-1}}+\frac{d}{2}-1}+\bar{C}(C,u,\theta)e^{-\frac{n}{\theta}}.
\eeqnn
\item\label{le:modb} For any  $\epsilon>0$, there exists an $n_0=n_0\(\epsilon, u, \theta\)$ such that for any $n\geq n_0$  satisfying $t\geq \theta^n u$, if $B_n$ does not occur, then we have
\beqnn
H^t({t-\theta^nu,t})\leq c(1+\epsilon)(1-\theta)^{\frac12}(1-\theta^{\frac12})^{-1}h(\theta^nu).
\eeqnn
\end{enumerate}
\end{lemma}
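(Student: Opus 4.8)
The plan is to treat the two parts by different means. Part (\ref{le:modb}) is a deterministic telescoping estimate that holds on the complement of $B_n$, while part (\ref{le:moda}) is a genuine tail bound assembled from the Brownian tail estimate (Lemma \ref{le:BM}) and the coming-down-from-infinity control of the ancestor count provided by Condition \ref{con2}.

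For part (\ref{le:moda}) I would first reduce the event $A_k$ to a union bound over finitely many Brownian displacements. Writing $r_k=t-\theta^{k-1}u$, $s_k=t-\theta^{k}u$ and $\delta_k=\theta^{k-1}u(1-\theta)$ for the $k$-th subinterval and its length, the representation (\ref{eq:max}) shows that $H^{t}(r_k,s_k)$ is the maximum, over the $N(s_k,t)$ ancestors alive at the later time $s_k$, of the displacement of that ancestor's continuous Brownian trajectory across $[r_k,s_k]$. Since in the lookdown construction the spatial Brownian motions are independent of the genealogy-generating Poisson processes, I would condition on the $\La$-coalescent, split on whether the number of ancestors exceeds a threshold $M_k$, and bound
$$\mathbb{P}_{\nu}(A_k)\leq M_k\,\mathbb{P}\(|B(\delta_k)|>c\,h(\delta_k)\)+\mathbb{P}\(N(s_k,t)>M_k\).$$
Lemma \ref{le:BM} together with $h(\delta_k)/\sqrt{\delta_k}=\sqrt{\log(1/\delta_k)}$ gives
$$\mathbb{P}\(|B(\delta_k)|>c\,h(\delta_k)\)=\mathbb{P}\(|B(1)|>c\sqrt{\log(1/\delta_k)}\)\leq \mathbbm{c}_2(d)\,c^{\,d-2}\(\log(1/\delta_k)\)^{(d-2)/2}\delta_k^{\,c^2/2},$$
while Condition \ref{con2}, through Condition \ref{con1}, controls the time to come down to $m$ blocks by $\lesssim m^{-(\be-1)}$, from which one extracts the upper control $N(s_k,t)\lesssim(\theta^{k}u)^{-1/(\be-1)}$ on a high-probability event, of the order of $v(\theta^{k}u)$ from Proposition \ref{pro:v}. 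Calibrating $M_k$ as a suitable polynomial-in-$k$ multiple of $(\theta^{k}u)^{-1/(\be-1)}$ makes the tail $\mathbb{P}(N(s_k,t)>M_k)$ summable, at the cost of inserting that polynomial into the first term. Using $\log(1/\delta_k)\sim k\log(1/\theta)$ and $\delta_k^{\,c^2/2}\sim\theta^{kc^2/2}$, each typical term is then of order $k^{\,2/(\be-1)+(d-2)/2}\,\theta^{k(c^2/2-1/(\be-1))}$; since the hypothesis $c>\sqrt{2/(\be-1)}$ forces $c^2/2-1/(\be-1)>0$, summing over $k>n$ is a convergent polynomial-times-geometric series whose tail is comparable to its first term, giving $C\,\theta^{(c^2/2-1/(\be-1))n}n^{2/(\be-1)+d/2-1}$, and the summed ancestor-count tails yield the exponentially small remainder $\bar C\,e^{-n/\theta}$.

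For part (\ref{le:modb}) I would argue deterministically on $B_n^{c}$. There every $A_k$ with $k>n$ fails, so each consecutive ancestor displacement over the subinterval $[t-\theta^{k-1}u,t-\theta^{k}u]$ is at most $c\,h(\delta_k)$. Fixing a particle alive at $t$ and telescoping its continuous ancestor trajectory along the grid $t-\theta^{k}u\uparrow t$, the triangle inequality gives $H^{t}(t-\theta^{n}u,t)\leq c\sum_{k>n}h\(\theta^{k-1}u(1-\theta)\)$. It then remains to evaluate this series: factoring $h\(\theta^{k-1}u(1-\theta)\)$ against $h(\theta^{n}u)$, the dominant behaviour is $(1-\theta)^{1/2}\theta^{(k-1-n)/2}$ times a slowly varying logarithmic ratio tending to $1$, whence $\sum_{k>n}h(\delta_k)\leq(1+\epsilon)(1-\theta)^{1/2}(1-\theta^{1/2})^{-1}h(\theta^{n}u)$ once $n\geq n_0(\epsilon,u,\theta)$ is large enough to absorb the logarithmic corrections and the $\sqrt{(k-1)/n}\to1$ factors uniformly into the $(1+\epsilon)$.

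The hard part will be part (\ref{le:moda}): obtaining a usable \emph{upper} bound on the ancestor count $N(s_k,t)$. The cited Proposition \ref{pro:ancestor} furnishes only a lower bound on the block count, as it is designed for the matching lower modulus, and the naive Markov bound coming from $\mathbb{E}[\mathbb{T}_m]\lesssim m^{-(\be-1)}$ is too weak (it fails to give even a finite first moment because $\be-1<1$). One must therefore supply the complementary upper tail, i.e.\ an exponential-type control of $\mathbb{P}(N(s_k,t)>M_k)$ under Condition \ref{con2}, and calibrate the threshold $M_k$ so that these tails sum to the $e^{-n/\theta}$ term while the inflated first term still reproduces exactly the stated polynomial correction $n^{2/(\be-1)+d/2-1}$. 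Ensuring that the Brownian tail, the ancestor-count bound, and the geometric summation combine with the precise exponents appearing in the statement is the delicate bookkeeping step.
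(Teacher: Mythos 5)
Your overall architecture coincides with the paper's: for part \ref{le:modb} you give exactly the paper's argument (telescoping $H^t(t-\theta^n u,t)\leq\sum_{k>n}H^t(t-\theta^{k-1}u,t-\theta^k u)$ on $B_n^c$, then evaluating the series $\sum_{k>n}h(\theta^{k-1}u(1-\theta))$ and absorbing the logarithmic ratio, which tends to $(1-\theta^{1/2})^{-1}$ by dominated convergence, into the $(1+\epsilon)$ factor), and that part is complete and correct. For part \ref{le:moda} you also set up the same decomposition the paper uses: a union bound $\mathbb{P}_\nu(A_k)\leq M_k\,\mathbb{P}(|B(\delta_k)|>ch(\delta_k))+\mathbb{P}_\nu(N(t-\theta^k u,t)>M_k)$ with the threshold $M_k$ essentially equal to the paper's $\theta^{-k/(\be-1)}k^{2/(\be-1)}$, and the first term is handled identically via Lemma \ref{le:BM}, producing the polynomial-times-geometric series whose tail gives the stated $\theta^{(c^2/2-1/(\be-1))n}n^{2/(\be-1)+d/2-1}$ term.

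However, there is a genuine gap, and you name it yourself without closing it: the exponential tail bound $\mathbb{P}_\nu(N(t-\theta^k u,t)>M_k)\leq C_2\,e^{-k/\theta}$ is asserted as something that ``one must supply,'' but no mechanism is given, and this is precisely the technical heart of part \ref{le:moda}. The paper's route is as follows: by \cite[Lemma 5.8]{LZ2}, the event $\{N(t-\theta^k u,t)>M_k\}$ equals $\{\mathbb{T}^t_{\lfloor M_k\rfloor}>\theta^k u\}$, and $\mathbb{T}^t_m$ is stochastically dominated by $\sum_{i>m}\hat\tau_i$ with \emph{independent} exponential variables $\hat\tau_i$ of rates $\lambda_i$ (the total coalescence rates). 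Condition \ref{con2} is then used \emph{directly} --- not ``through Condition \ref{con1}'' as you suggest --- since it controls the tail sums $\sum_{b>m}\lambda_b^{-1}\lesssim m^{-(\be-1)}$; this gives $\lambda_i\gtrsim\theta^{-k}k^2$ for $i>\lfloor M_k\rfloor$, which both guarantees finiteness of the exponential moments at the Chernoff parameter $k\theta^{-(k+1)}u^{-1}$ and shows that the resulting product $Q$ satisfies $\log Q\lesssim u^{-1}\theta^{-1}k^{-1}$, hence is bounded, yielding $I_{2,k}\leq C_2 e^{-k/\theta}$. Your observation that the first-moment Markov bound from Condition \ref{con1} is too weak is essentially right in spirit (though your parenthetical reason is garbled --- $\mathbb{E}[\mathbb{T}_m]\lesssim m^{-(\be-1)}$ \emph{is} a finite first moment): Markov gives only $\mathbb{P}(\mathbb{T}^t_{M_k}>\theta^k u)\lesssim k^{-2}$, which is summable but polynomially decaying, and the exponential rate $e^{-n/\theta}$ is genuinely needed downstream in Proposition \ref{le:upp}, where $\mathbb{P}_\nu(B_k)$ is multiplied by the $O(\theta^{-k})$ cardinality of the grid. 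So the proposal correctly identifies all the pieces and the delicate step, but leaves the decisive Chernoff-with-coalescence-rates argument unproved.
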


\begin{proof}
\ref{le:moda} By the total probability formula, the probability of $A_k$ satisfies
\beqnn
\mathbb{P}_{\nu}(A_k)\ar=\ar
\mathbb{P}_{\nu}\(A_k,N({t-\theta^ku,t})\leq\theta^{-\frac{k}{{\be-1}}}k^{\frac{2}{{\be-1}}}\)+\mathbb{P}_{\nu}\(A_k,N({t-\theta^ku,t})>\theta^{-\frac{k}{{\be-1}}}k^{\frac{2}{{\be-1}}}\)\cr
\ar\leq\ar\left\lfloor\theta^{-\frac{k}{{\be-1}}}k^{\frac{2}{{\be-1}}}\right\rfloor
\times\mathbb{P}\(|B(\theta^{k-1}u\(1-\theta\))|>ch\(\theta^{k-1}u\(1-\theta\)\)\)\cr
\ar\ar\qquad+\mathbb{P}_{\nu}\(N({t-\theta^ku,t})>\theta^{-\frac{k}{{\be-1}}}k^{\frac{2}{{\be-1}}}\)\cr
\ar\equiv\ar I_{1,k}+I_{2,k},
\eeqnn
where $B(\cdot)$ is the standard Brownian motion.
By estimate (\ref{eq:bm}),
\beqnn
I_{1,k}\ar\leq\ar\left\lfloor\theta^{-\frac{k}{{\be-1}}}k^{\frac{2}{{\be-1}}}\right\rfloor
\times\mathbb{P}\Big(|B(1)|>c\sqrt{\log\(\theta^{-k+1}u^{-1}\(1-\theta\)^{-1}\)}\Big)\cr
\ar\leq\ar\theta^{-\frac{k}{{\be-1}}}k^{\frac{2}{{\be-1}}}
\times \mathbbm{c}_2(d)c^{d-2}\left[\log\(\theta^{-k+1}u^{-1}(1-\theta)^{-1}\)\right]^{\frac{d}{2}-1}\(\theta^{k-1}u(1-\theta)\)^{\frac{c^2}{2}}\cr
\ar\leq \ar C_1(c,d,u,\theta)\theta^{\(\frac{c^2}{2}-\frac{1}{{\be-1}}\)k}k^{\frac{2}{{\be-1}}}\left[\log\(\theta^{-k+1}u^{-1}(1-\theta)^{-1}\)\right]^{\frac{d}{2}-1}.
\eeqnn
Moreover, it follows from \cite[Lemma 5.8]{LZ2} that
\beqnn
I_{2,k}\ar=\ar\mathbb{P}_{\nu}\(\mathbb{T}^{t}_{\big\lfloor\theta^{-\frac{k}{{\be-1}}}k^{\frac{2}{{\be-1}}}\big\rfloor}>\theta^{k}u\)
\,\leq\,\mathbb{P}_{\nu}\Bigg(\sum_{i>\big\lfloor\theta^{-\frac{k}{{\be-1}}}k^{\frac{2}{{\be-1}}}\big\rfloor}\hat{\tau_i}>\theta^{k}u\Bigg),
\eeqnn
where $\mathbb{T}^{t}_m$ is defined by (\ref{eq:comdown}) and $\hat{\tau_i}$, $i>\big\lfloor\theta^{-\frac{k}{{\be-1}}}k^{\frac{2}{{\be-1}}}\big\rfloor$ are independent exponentially distributed random variables with parameter $\lambda_i$. Then
\beqlb\label{eq:I_2}
I_{2,k}\ar\leq\ar\mathbb{P}_{\nu}\Bigg(\theta^{-k}u^{-1}\sum_{i>\big\lfloor\theta^{-\frac{k}{{\be-1}}}k^{\frac{2}{{\be-1}}}\big\rfloor}\hat{\tau_i}>1\Bigg)\cr
\ar\leq\ar e^{-\frac{k}{\theta}}\mathbb{E}_{\nu}\Bigg(\exp\bigg\{\frac{k}{\theta}\Big(\theta^{-k}u^{-1}
\sum_{i>\big\lfloor\theta^{-\frac{k}{{\be-1}}}k^{\frac{2}{{\be-1}}}\big\rfloor}\hat{\tau_i}\Big)\bigg\}\Bigg)\cr
\ar=\ar e^{-\frac{k}{\theta}}\prod_{i>\big\lfloor\theta^{-\frac{k}{{\be-1}}}k^{\frac{2}{{\be-1}}}\big\rfloor}
\mathbb{E}_{\nu}\(\exp\left\{k\theta^{-\(k+1\)}u^{-1}\hat{\tau}_i\right\}\).
\eeqlb
Condition \ref{con2} implies that for $k$ large enough, there exists a constant $C$ such that
\beqnn
{\lambda_{i}}^{-1}<\sum_{b>\lfloor\theta^{-\frac{k}{{\be-1}}}k^{\frac{2}{{\be-1}}}\big\rfloor}^{\infty}{\lambda_{b}}^{-1}<C{\Big(\Big\lfloor\theta^{-\frac{k}{{\be-1}}}k^{\frac{2}{{\be-1}}}\Big\rfloor\Big)}^{-{(\be-1)}}
\eeqnn
holds for any $i>\big\lfloor\theta^{-\frac{k}{{\be-1}}}k^{\frac{2}{{\be-1}}}\big\rfloor$. That is to say,
${\lambda_{i}}>C^{-1}{\theta^{-k}k^{2}}$
holds for any $i>\big\lfloor\theta^{-\frac{k}{{\be-1}}}k^{\frac{2}{{\be-1}}}\big\rfloor$.
Let's choose $k$ large enough satisfying
${\lambda_{i}}>2k\theta^{-\(k+1\)}u^{-1}$,
which is also sufficient for the existence of exponential moments in (\ref{eq:I_2}) and therefore,
\beqnn
I_{2,k}\ar\leq\ar e^{-\frac{k}{\theta}}\prod_{i>\big\lfloor\theta^{-\frac{k}{{\be-1}}}k^{\frac{2}{{\be-1}}}\big\rfloor}
\frac{\lambda_i}{\lambda_i-k\theta^{-\(k+1\)}u^{-1}}
\equiv e^{-\frac{k}{\theta}}Q,
\eeqnn
where
\beqnn
\log Q\ar=\ar \sum_{i>\big\lfloor\theta^{-\frac{k}{{\be-1}}}k^{\frac{2}{{\be-1}}}\big\rfloor}\log\bigg(1+\frac{k\theta^{-\(k+1\)}u^{-1}}{\lambda_i-k\theta^{-\(k+1\)}u^{-1}}\bigg)\cr
\ar\leq\ar\sum_{i>\big\lfloor\theta^{-\frac{k}{{\be-1}}}k^{\frac{2}{{\be-1}}}\big\rfloor}\frac{k\theta^{-\(k+1\)}u^{-1}}
{\lambda_i-k\theta^{-\(k+1\)}u^{-1}}\cr
\ar\leq\ar 2k\theta^{-\(k+1\)}u^{-1}\sum_{i>\big\lfloor\theta^{-\frac{k}{{\be-1}}}k^{\frac{2}{{\be-1}}}\big\rfloor}\frac{1}{\lambda_i}\cr
\ar\leq\ar2k\theta^{-\(k+1\)}u^{-1}C\(\big\lfloor\theta^{-\frac{k}{{\be-1}}}k^{\frac{2}{{\be-1}}}\big\rfloor\)^{-{(\be-1)}}\cr
\ar\leq\ar 2Cu^{-1}\theta^{-1}k^{-1}.
\eeqnn
It implies that there is a constant $C_2\(C,u,\theta\)$ depending on $C,\,u,$ and $\theta$ such that $Q\leq C_2\(C,u,\theta\)$. Then
\beqnn
I_{2,k}\leq C_2(C,u,\theta) e^{-\frac{k}{\theta}}.
\eeqnn
Sum over $k>n$ to derive
{\small
\beqnn
\ar\ar\mathbb{P}_{\nu}(B_n)\leq\sum_{k>n}\mathbb{P}_{\nu}(A_k)\cr
\ar\ar\quad\leq C_1(c,d,u,\theta)\sum_{k>n}\theta^{\(\frac{c^2}{2}-\frac{1}{{\be-1}}\)k}k^{\frac{2}{{\be-1}}}\left[\log\(\theta^{-k+1}u^{-1}(1-\theta)^{-1}\)\right]^{\frac{d}{2}-1}
+C_2(C,u,\theta) \sum_{k>n}e^{-\frac{k}{\theta}}\cr
\ar\ar\quad\leq C_1(c,d,u,\theta)\theta^{\big(\frac{c^2}{2}-\frac{1}{{\be-1}}\big)n}n^{\frac{2}{{\be-1}}+\frac{d}{2}-1}
\sum_{k>n}\frac{\theta^{\big(\frac{c^2}{2}-\frac{1}{{\be-1}}\big)(k-n)}k^{\frac{2}{{\be-1}}}
\big[(k-1)\log\frac{1}{\theta}+\log\frac{1}{u(1-\theta)}\big]^{\frac{d}{2}-1}}
{n^{\frac{2}{{\be-1}}+\frac{d}{2}-1}}\cr
\ar\ar\quad\qquad+C_2(C,u,\theta)\frac{e^{-\frac{n+1}{\theta}}}{1-e^{-\frac{1}{\theta}}}\cr
%\ar\ar\quad=C_1(c,d,u,\theta)\theta^{\big(\frac{c^2}{2}-\frac{1}{{\be-1}}\big)n}n^{\frac{2}{{\be-1}}+\frac{d}{2}-1}
%\sum_{j=1}^{\infty}\frac{\theta^{\big(\frac{c^2}{2}-\frac{1}{{\be-1}}\big)j}(n+j)^{\frac{2}{{\be-1}}}
%\big[(n+j-1)\log\frac{1}{\theta}+\log\frac{1}{u(1-\theta)}\big]^{\frac{d}{2}-1}}
%{n^{\frac{2}{{\be-1}}+\frac{d}{2}-1}}\cr
%\ar\ar\qquad+\bar{C}(C,u,\theta)e^{-\frac{n}{\theta}}\cr
\ar\ar\quad\equiv C_1(c,d,u,\theta)\theta^{\(\frac{c^2}{2}-\frac{1}{{\be-1}}\)n}n^{\frac{2}{{\be-1}}+\frac{d}{2}-1}\al_n(c,{\be},u,\theta)
+\bar{C}(C,u,\theta)e^{-\frac{n}{\theta}}.
\eeqnn
}
It follows from the Dominated Convergence Theorem that
\beqnn
\lim_{n\rightarrow\infty}\al_n(c,{\be},u,\theta)= \(-\log{\theta}\)^{\frac{d}{2}-1}\sum_{j=1}^{\infty}\theta^{\(\frac{c^2}{2}-\frac{1}{{\be-1}}\)j}=\al(c,{\be},u,\theta).
\eeqnn
Consequently, given any $\epsilon>0$, there exists an $n_{\epsilon}=n_{\epsilon}(c,{\be},u,\theta)$ such that for any $n\geq n_{\epsilon}$,
\beqnn
\al_n(c,{\be},u,\theta)\leq (1+\epsilon)\al(c,{\be},u,\theta).
\eeqnn
We may choose $C(c,d,{\be},u,\theta)$ satisfying
\beqnn
C(c,d,{\be},u,\theta)\geq C_1(c,d,u,\theta)\times\left[(1+\epsilon)\al(c,{\be},u,\theta)\vee\max\{\al_n(c,{\be},u,\theta),n<n_{\epsilon}\}\right].
\eeqnn
As a result,
\beqnn
\mathbb{P}_{\nu}(B_n)\ar\leq\ar C(c,d,{\be},u,\theta)\theta^{\(\frac{c^2}{2}-\frac{1}{{\be-1}}\)n}n^{\frac{2}{{\be-1}}+\frac{d}{2}-1}+\bar{C}(C,u,\theta)e^{-\frac{n}{\theta}}.
\eeqnn

\ref{le:modb} Given any $\theta\in(0,1)$, $u\in(0,1]$, let
\beqlb\label{eq:gamma_n}
\gamma_n(u,\theta)\ar\equiv\ar\sum_{j=1}^{\infty}\sqrt{\theta^{j-1}\frac{(n+j-1)\log\frac{1}{\theta}
+\log\frac{1}{u(1-\theta)}}{n\log\frac{1}{\theta}+\log\frac{1}{u}}}.
\eeqlb
It also follows from the Dominated Convergence Theorem that
\beqnn
\lim_{n\rightarrow\infty}\gamma_n(u,\theta)=\sum_{j=1}^{\infty}\sqrt{\theta^{j-1}}=\big(1-\theta^{\frac12}\big)^{-1}.
\eeqnn
Therefore, $\forall$ $\epsilon>0$, there is an $n_0=n_0(\epsilon,u,\theta)$ such that for any $n\geq n_0$,
\beqlb\label{eq:limgamma}
\gamma_n(u,\theta)\leq (1+\epsilon)\big(1-\theta^{\frac12}\big)^{-1}.
\eeqlb
By the lookdown representation of Fleming-Viot process, it is easy to see that
\beqnn
H^t\({t-\theta^nu,t}\)\leq\sum_{k> n}H^t\({t-\theta^{k-1}u,t-\theta^{k}u}\).
\eeqnn
Then for any $n\geq n_0$, $t\geq \theta^n u$, if $B_n$ does not occur, we have
\beqnn
H^t\({t-\theta^nu,t}\)
\ar\leq\ar\sum_{k>n}ch\big(\theta^{k-1}u(1-\theta)\big)\cr
\ar\leq\ar\sum_{k>n}c\sqrt{\theta^{k-1}u\(1-\theta\)\log\big(\theta^{-k+1}u^{-1}\(1-\theta\)^{-1}\big)}\cr
\ar=\ar c\sqrt{\theta^{n}u\log\(\theta^{-n}u^{-1}\)}
\sum_{k>n}\sqrt{\theta^{k-n-1}\(1-\theta\)\frac{(k-1)\log\frac{1}{\theta}+\log\frac{1}{u(1-\theta)}}{n\log\frac{1}{\theta}+\log\frac{1}{u}}}\cr
\ar=\ar ch(\theta^nu)\sqrt{1-\theta}\gamma_n(u,\theta)\cr
\ar\leq\ar c(1+\epsilon)(1-\theta)^{\frac12}(1-\theta^{\frac12})^{-1}h(\theta^nu).
\eeqnn
The proof is complete.
\end{proof}

\subsubsection{Upper bounds for the global modulus of continuity}
For the upper bounds we refine  the approach in \cite{LZ2}. To obtain a more precise upper bound for the global modulus of continuity, we adapt the scheme for partitioning the time interval in \cite{DIP} to work with a finer collection of grid points. Let $\theta$ be small enough and $M$ be large enough. Divide $[0,T]$ into subintervals of length $\theta^n$ and then further divide each subinterval  equally  into $M$ smaller intervals. All the partition points constitute the collection of grid points. Obviously the distance between two adjacent grid points is $\theta^n/M$. The maximal dislocation between any two close grid points with possible spacing of $j\theta^n/M$, $j=1,\ldots,M$ can be obtained through Lemma \ref{le:mod} \ref{le:modb} by  setting $u=j/M$. As in L\'evy's proof for Brownian motion, see for example \cite[Theorem 4.5]{DIP}, we can get a better approximation for any two close times $s<t\in[0,T]$ with $\theta^{n+1}<t-s\leq\theta^n$
by grid points $s_0$ $(s<s_0)$ and $t_0$ $(t_0<t)$.
The maximal dislocation between $s$ and $s_0$ or $t_0$ and $t$ is estimated by the previously known result in  Lemma \ref{le:mod1} and bounded by $ C^{*}h(\theta^n/M)$, while the maximal dislocation between $s_0$ and $t_0$ can be obtained by the maximal dislocation between two  neighboring grid points. The upper bound for the maximal dislocation between $s$ and $t$ is subsequently reached by taking a summation.

\begin{proposition}\label{le:upp}
Under Condition \ref{con2},
if $c>\sqrt{2\be/(\be-1)}$, then for any $\nu\in M_1(\mathbb{R}^d)$, $\mathbb{P}_{\nu}$-a.s. there exists a positive random variable $\delta$ such that for any $0\leq s<t\leq T$ with $t-s\leq\delta$,
\beqnn
H^t\({s,t}\)\leq ch(t-s).
\eeqnn
Moreover, there are positive constants $\tilde{c}_1$, $\tilde{c}_2$ and $\tilde{c}_3$ such that
\beqnn
\mathbb{P}_\nu\(\delta\leq\rho\)\leq \tilde{c}_1\rho^{\tilde{c}_2}\text{\,\,\,\,\,\,\,\,if\,\,}0\leq\rho\leq\tilde{c}_3.
\eeqnn
\end{proposition}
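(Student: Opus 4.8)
The plan is to convert the one-interval neighbour estimate of Lemma~\ref{le:mod} into a bound holding simultaneously for \emph{all} pairs $s<t$, by running a L\'evy-type chaining over a two-scale grid and reading off the random radius $\delta$ from a Borel--Cantelli estimate. First I would fix an auxiliary constant $c'$ with $\sqrt{2\be/(\be-1)}<c'<c$, and then choose $\theta\in(0,1)$ and $\epsilon>0$ so small that
\begin{equation*}
c'(1+\epsilon)(1-\theta)^{1/2}\big(1-\theta^{1/2}\big)^{-1}<c,
\end{equation*}
setting $\eta:=c-c'(1+\epsilon)(1-\theta)^{1/2}(1-\theta^{1/2})^{-1}>0$; this is possible since $(1-\theta)^{1/2}(1-\theta^{1/2})^{-1}\to1$ as $\theta\to0$. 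For each scale $n$ I partition $[0,T]$ into cells of length $\theta^n/M$, so the grid points at scale $n$ are the $O(M\theta^{-n})$ multiples of $\theta^n/M$ in $[0,T]$, where the integer $M$ is chosen large at the very end. For each grid point $t_0$ and each $j\in\{1,\dots,M\}$ I consider the event $B_n(t_0,j/M,\theta,c')$ from (\ref{eq:B_n}).

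The core of the argument is the summability of all these events. As the bound in Lemma~\ref{le:mod}\ref{le:moda} is uniform in the recovery time and there are finitely many values $u=j/M$, summing over grid points and over $j$ gives
\begin{equation*}
\sum_{n}\sum_{t_0}\sum_{j=1}^{M}\mathbb{P}_\nu\big(B_n(t_0,j/M,\theta,c')\big)\lesssim M^2\sum_n\theta^{-n}\Big(\theta^{(\frac{(c')^2}{2}-\frac{1}{\be-1})n}n^{\frac{2}{\be-1}+\frac{d}{2}-1}+e^{-n/\theta}\Big).
\end{equation*}
The extra factor $\theta^{-n}$ produced by the number of grid points is exactly what shifts the exponent to $\frac{(c')^2}{2}-\frac{1}{\be-1}-1$, which is positive \emph{precisely} because $c'>\sqrt{2\be/(\be-1)}$; the second term is summable once $e^{-1/\theta}<\theta$. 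Hence the triple sum is finite, and Borel--Cantelli gives a.s.\ a finite largest scale $N$ at which some $B_n(t_0,j/M,\theta,c')$ occurs. With $n_0:=\max_{1\le j\le M}n_0(\epsilon,j/M,\theta)$ the threshold of Lemma~\ref{le:mod}\ref{le:modb} and $\theta^{*}$ the random time of Lemma~\ref{le:mod1}, I set $\delta:=\min\{\theta^{(N\vee n_0)+1},\ \theta M\theta^{*}\}$, which is a.s.\ positive.

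Given $s<t$ with $t-s\le\delta$, let $n$ be determined by $\theta^{n+1}<t-s\le\theta^n$; then $t-s\le\theta^{(N\vee n_0)+1}$ forces $n>N\vee n_0$ (so no bad event occurs at scale $n$ and $n\ge n_0$), while $t-s\le\theta M\theta^{*}$ forces $\theta^n/M\le\theta^{*}$. Pick the grid points $s_0:=\min\{i\theta^n/M:i\theta^n/M\ge s\}$ and $t_0:=\max\{i\theta^n/M:i\theta^n/M\le t\}$, so that $s_0-s,\,t-t_0\le\theta^n/M$ and $t_0-s_0=\tfrac{j}{M}\theta^n$ for some $1\le j\le M$. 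The triangle inequality for $H^t$ gives $H^t(s,t)\le H^t(s,s_0)+H^t(s_0,t_0)+H^t(t_0,t)$. I would treat the middle term by the genealogical consistency $H^t(s_0,t_0)\le H^{t_0}(s_0,t_0)$, which follows from the cocycle identity $L_j^t(r)=L_{L_j^t(t_0)}^{t_0}(r)$ for the ancestor-level maps: every ancestral lineage of a time-$t$ particle between $s_0$ and $t_0$ coincides with a lineage of its time-$t_0$ ancestor. Applying Lemma~\ref{le:mod}\ref{le:modb} at the recovery time $t_0$ with $u=j/M$ then yields $H^t(s_0,t_0)\le c'(1+\epsilon)(1-\theta)^{1/2}(1-\theta^{1/2})^{-1}h(t_0-s_0)\le(c-\eta)h(t-s)$, using that $h$ is increasing near $0$ and $t_0-s_0\le t-s$. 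For the edge terms, $H^t(t_0,t)\le C^{*}h(t-t_0)$ comes straight from Lemma~\ref{le:mod1} (the recovery time equals the later endpoint), and $H^t(s,s_0)\le H^{s_0}(s,s_0)\le C^{*}h(s_0-s)$ by the same consistency followed by Lemma~\ref{le:mod1}; both are legitimate since $\theta^n/M\le\theta^{*}$. Because $\sup_{n\ge1}h(\theta^n/M)/h(\theta^{n+1})\le(M\theta)^{-1/2}\big(\tfrac{\log M}{2\log(1/\theta)}+1\big)^{1/2}\to0$ as $M\to\infty$, a single large choice of $M$ makes $2C^{*}h(\theta^n/M)\le\eta h(t-s)$; summing the three pieces gives $H^t(s,t)\le ch(t-s)$.

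For the tail I use $\{\delta\le\rho\}\subseteq\{\theta M\theta^{*}\le\rho\}\cup\{\theta^{(N\vee n_0)+1}\le\rho\}$. The first event has probability at most $C_6(\rho/(\theta M))^{C_7}$ by (\ref{eq:theta}). For the second, $\theta^{(N\vee n_0)+1}\le\rho$ forces, for $\rho$ small, $N\ge m(\rho):=\lceil\log(1/\rho)/\log(1/\theta)\rceil-1$, and the Borel--Cantelli tail obeys
\begin{equation*}
\mathbb{P}_\nu\big(N\ge m\big)\lesssim\theta^{(\frac{(c')^2}{2}-\frac{1}{\be-1}-1)m}m^{\frac{2}{\be-1}+\frac{d}{2}-1}+\big(e^{-1/\theta}/\theta\big)^{m}.
\end{equation*}
Substituting $\theta^{m(\rho)}\asymp\rho$ turns the right-hand side into $C\rho^{a}(\log(1/\rho))^{b}$ with $a=\frac{(c')^2}{2}-\frac{1}{\be-1}-1>0$; absorbing the logarithm into a slightly smaller power and combining with the $\theta^{*}$-contribution yields $\mathbb{P}_\nu(\delta\le\rho)\le\tilde c_1\rho^{\tilde c_2}$ for $0\le\rho\le\tilde c_3$. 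I expect the genuine difficulty to be bookkeeping rather than any single estimate: one must verify that the grid-count factor $\theta^{-n}$ is exactly what upgrades the local threshold $\sqrt{2/(\be-1)}$ to the global $\sqrt{2\be/(\be-1)}$, that the parameters $c',\theta,\epsilon$ and the refinement $M$ can be tuned so that the middle term together with the only-$O(\sqrt{(\log M)/M})$ edge terms land at exactly $ch(t-s)$, and that the Borel--Cantelli tail in $N$ transfers to a polynomial tail in $\rho$ through $\theta^{m(\rho)}\asymp\rho$.
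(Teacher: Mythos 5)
Your proposal is correct and follows essentially the same route as the paper's proof: the same two-scale grid of mesh $\theta^n/M$, the same family of events $B_n(\cdot,j/M,\theta,\bar c)$ from (\ref{eq:B_n}) summed against the grid-count factor $\theta^{-n}$ (which is exactly how the paper's estimate (\ref{eq:J}) upgrades the local threshold $\sqrt{2/(\be-1)}$ to the global $\sqrt{2\be/(\be-1)}$), the same chaining $H^t(s,t)\le H^{s_0}(s,s_0)+H^{t_0}(s_0,t_0)+H^t(t_0,t)$ with edge terms controlled by Lemma \ref{le:mod1} and the middle term by Lemma \ref{le:mod} \ref{le:modb}, and the same polynomial tail for $\delta$ obtained by combining (\ref{eq:theta}) with the geometric tail in the scale index. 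The only bookkeeping difference is your $\delta=\min\{\theta^{(N\vee n_0)+1},\,\theta M\theta^{*}\}$ in place of the paper's $\theta^{*}\wedge\theta^{J}$ (with $\theta M>2$): since $\theta M\theta^{*}$ may exceed $e^{-1}$, you should additionally cap $\delta$ at $e^{-1}$ (the paper arranges $\theta^{*}\le e^{-1}$) so that the monotonicity of $h$ used in $h(\theta^{n+1})\le h(t-s)$ and $h(t_0-s_0)\le h(t-s)$ is legitimate.
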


\begin{proof}
For any $c>\sqrt{2\be/(\be-1)}$, choose $\bar{c}\in(\sqrt{2\be/(\be-1)},c)$ and  $\epsilon,\,\theta\in(0,1)$ small enough so that
\beqnn
\bar{c}(1+\epsilon)(1-\theta)^{\frac12}(1-\theta^{\frac12})^{-1}<c.
\eeqnn
Choose $M$ large enough such that $\theta M>2$ and
\beqlb\label{eq:con3}
2C^{*}\sqrt{\frac{\log M}{M\theta\log({1}/{\theta})}}+\bar{c}(1+\epsilon)(1-\theta)^{\frac12}(1-\theta^{\frac12})^{-1}<c,
\eeqlb
where $C^{*}$ is the constant in (\ref{eq:mod1}). Let
\beqlb\label{eq:n_0}
\bar{n}_0=\bar{n}_0\(\epsilon,M,\theta\)=\max\Big\{n_0\big(\epsilon,{j}/{M},\theta\big):j=1,\ldots,M\Big\},
\eeqlb
where $n_0\(\cdot,\cdot,\cdot\)$ is given in Lemma \ref{le:mod} \ref{le:modb}.
Then for any $n\geq \bar{n}_0$, %the conclusion of
Lemma \ref{le:mod} \ref{le:modb} can be applied to any two grid points with possible spacing \{$j\theta^n/M$, $j=1,\ldots,M$\}.
Define
%\begin{small}
\beqnn
D_n\ar:=\ar\bigcup_{k=n}^{\infty}\bigcup_{0\leq m\leq T\theta^{-k}}\bigcup_{i=0}^{M-1}\bigcup_{j=1}^{M}
B_k\Big(\big(m+\frac{i}{M}\big)\theta^k,\frac{j}{M},\theta,\bar{c}\Big).%\cr
%\ar=\ar\bigcup_{k=n}^{\infty}\bigcup_{0\leq m\leq T\theta^{-k}}\bigcup_{i=0}^{M-1}\bigcup_{j=1}^{M}\bigcup_{\ell=k+1}^{\infty}\cr
%\ar\ar\qquad\left\{H^{\big(m+\frac{i}{M}\big)\theta^k}\bigg(\big(m+\frac{i}{M}\big)\theta^k-\frac{j}{M}\theta^{\ell -1}, %\big(m+\frac{i}{M}\big)\theta^k-\frac{j}{M}\theta^{\ell }\bigg)>\bar{c}h\Big(\frac{j}{M}\theta^{\ell-1}\(1-\theta\)\Big)\right\}.
\eeqnn
%\end{small}
%Note that $D_n$ is decreasing  in $n$, which implies that
%the nonoccurrence of $D_n$ must lead to the nonoccurrence of $D_{n+1}$.
Set
\beqnn
J\equiv J\(\epsilon,M,\theta,T\):=\min\left\{n: D_n\,\text{does not occur}\right\}\vee \bar{n}_0.
\eeqnn
If $n>\bar{n}_0$, Lemma \ref{le:mod} \ref{le:moda} yields
{\small
\beqlb\label{eq:J}
\ar\ar\mathbb{P}_{\nu}\(J\geq n\)\,\leq\,\mathbb{P}_{\nu}\(D_{n-1}\)\cr
\ar\ar\quad\leq\sum_{k=n-1}^{\infty}\big(T\theta^{-k}+1\big)M^2\max\Big\{C\big(\bar{c},d,\be,\frac{j}{M},\theta\big): j=1,\ldots,M\Big\}\theta^{\big(\frac{\bar{c}^2}{2}-\frac{1}{{\be-1}}\big)k}k^{\frac{2}{{\be-1}}+\frac{d}{2}-1}\cr
\ar\ar\qquad+\sum_{k=n-1}^{\infty}\big(T\theta^{-k}+1\big)M^2\max\Big\{\bar{C}\big(C,\frac{j}{M},\theta\big): j=1,\ldots,M\Big\}e^{-\frac{k}{\theta}}\cr
\ar\ar \quad\leq\hat{C}\(\bar{c},C,d,\be,M,\theta,T\)\Big[\theta^{\(\frac{\bar{c}^2}{2}-\frac{1}{{\be-1}}-1\)n}n^{\frac{2}{{\be-1}}+\frac{d}{2}-1}\vee\big(\theta e^{\frac{1}{\theta}}\big)^{-n}\Big].
\eeqlb
}
%{\tt\red which suffices to state that with probability one $J$ is finite}.
Equation (\ref{eq:J}) implies that
{\small\beqnn
\ar\ar\log\mathbb{P}_{\nu}\(\theta^J\leq\rho\)\,=\,\log\mathbb{P}_{\nu}\(J\geq\log\rho/\log\theta\)\cr
\ar\ar\quad\leq\log\hat{C}\(\bar{c},C,d,\be,M,\theta,T\)\cr
\ar\ar\qquad+\Bigg[\bigg(\Big(\frac{\bar{c}^2}{2}-\frac{1}{{\be-1}}-1\Big)\log\rho
+\Big(\frac{2}{{\be-1}}+\frac{d}{2}-1\Big)\log\Big(\frac{\log\rho}{\log\theta}\Big)\bigg)\bigvee \bigg(-\frac{\log\rho}{\log\theta}\log\Big(\theta e^{\frac{1}{\theta}}\Big)\bigg)\Bigg].
\eeqnn}
Let $\delta:=\theta^{*}\wedge \theta^J$ with $\theta^{*}$  given in Lemma \ref{le:mod1} satisfying $\theta^{*}\leq e^{-1}$. Combining this result with (\ref{eq:theta}), there are positive constants $\tilde{c}_1$, $\tilde{c}_2$ and $\tilde{c}_3$ such that
\beqnn
\mathbb{P}_{\nu}\(\delta\leq\rho\)\leq\mathbb{P}_{\nu}(\theta^{*}\leq\rho)+\mathbb{P}_{\nu}(\theta^{J}\leq\rho)\leq \tilde{c}_1\rho^{\tilde{c}_2}\text{\,\,\,\,\,\,\,\,if\,\,}0\leq\rho\leq\tilde{c}_3.
\eeqnn

For any $0\leq s< t\leq T$ with $t-s\leq\delta$, choose $n$ such that
%\beqnn
$\theta^{n+1}<t-s\leq \theta^{n}.$
%\eeqnn
We clarify that here the value of $n$ is greater than or equal to $J$ since $\delta\leq \theta^{J}$ and so $D_n$ does not occur.
Then $s$ and $t$ are approximated by grid points as follows. Choose
$0\leq m\leq T\theta^{-n}$, $i\in\{0,\ldots,M-1\}$ and  $j\in\{1,\ldots,M\}$ such that
\beqnn
t_0\ar=\ar\(m+(i/M)\)\theta^n\leq t<\(m+((i+1)/M)\)\theta^n
\eeqnn
and
\beqnn
\(m+(i-j-1)/M\)\theta^n<s\leq \(m+((i-j)/M)\)\theta^n=s_0<t_0,
\eeqnn
where the existence of $j$ is due to  the fact that $\theta^{n+1}>2\theta^n/M$.
Note that
\beqnn
(s_0-s)\vee (t-t_0)\leq \theta^n/M.
\eeqnn
The upper bound for the maximal dislocation between $s$ and $s_0$ or between $t_0$ and $t$ is derived by
the modulus of continuity result in Lemma \ref{le:mod1} and consequently,
\beqlb\label{eq:mod2}
H^{s_0}\({s,s_0}\)\vee H^{t}\({t_0,t}\)\ar\leq\ar C^{*}h\(\theta^n/M\)\cr
\ar=\ar C^{*}h\(\theta^{n+1}\)\sqrt{\frac{1}{\theta M}\frac{\log M+n\log\({1}/{\theta}\)}{\log\({1}/{\theta}\)+n\log\({1}/{\theta}\)}}\cr
\ar\leq\ar C^{*}h\(\theta^{n+1}\)\sqrt{\frac{\log(M)}{\theta M\log(1/\theta)}}\cr
\ar\leq\ar C^{*}\sqrt{\frac{\log(M)}{\theta M\log(1/\theta)}}h\(t-s\),
\eeqlb
where the last line is due to the monotonicity of $h(x)$ on $(0,e^{-1}]$.
Recall that $B_n(t,u,\theta,c)$ is given by (\ref{eq:B_n}). We set $t=t_0$, $u=j/M$, and $c=\bar{c}$ in (\ref{eq:B_n}). Since  $s_0=t_0-(j/M)\theta^n$ and the value of $n$ is large enough such that $B_n(t_0,j/M,\theta,\bar{c})$ does not occur, then the maximal dislocation between grid points $s_0$ and $t_0$
follows from Lemma \ref{le:mod} \ref{le:modb}. Consequently,
\beqlb\label{eq:grid}
H^{t_0}\({s_0,t_0}\)\ar\leq\ar\bar{c}(1+\epsilon)(1-\theta)^{\frac12}(1-\theta^{\frac12})^{-1}h\(t_0-s_0\)\cr
\ar\leq\ar\bar{c}(1+\epsilon)(1-\theta)^{\frac12}(1-\theta^{\frac12})^{-1}h\(t-s\),
\eeqlb
where the last line also follows from the monotonicity of $h(x)$. The lookdown representation implies that
\beqnn
H^t\({s,t}\)\leq H^{s_0}\({s,s_0}\)+H^{t_0}\({s_0,t_0}\)+H^t\({t_0,t}\).
\eeqnn
Combining with (\ref{eq:con3}), (\ref{eq:mod2}) and (\ref{eq:grid}), we have
\beqnn
H^t\({s,t}\)\ar\leq \ar\bigg[2C^{*}\sqrt{\frac{\log(M)}{\theta M\log(1/\theta)}}+\bar{c}(1+\epsilon)(1-\theta)^{\frac12}(1-\theta^{\frac12})^{-1}\bigg]h\(t-s\)\cr
\ar<\ar ch\(t-s\).
\eeqnn
The proof is complete.
\end{proof}

Combining with the result in Proposition \ref{le:upp},
we can easily obtain the upper bound of global modulus of continuity for the ancestry process as follows.
\begin{lemma}\label{th:global1}%\label{th:global}
Assume that Condition \ref{con2} holds. Then for any $T>0$ and any $\nu\in M_1(\mathbb{R}^d)$, we have $\mathbb{P}_\nu$-a.s.
		\begin{equation*}\label{uniform_upper}
	\limsup_{\ep\to 0+}\sup_{\ep\leq t\le T }\frac{H^t\({t-\ep,t}\)}{h(\ep)}\leq\sqrt{\frac{2\beta}{\beta-1}}.
\end{equation*}

\end{lemma}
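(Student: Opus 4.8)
The plan is to derive Lemma~\ref{th:global1} directly from Proposition~\ref{le:upp}, which already contains essentially all the analytic work; the remaining task is a quantifier manipulation and a Borel--Cantelli argument to pass from the ``for each fixed $c$'' statement to the $\limsup$ bound with the optimal constant $\sqrt{2\beta/(\beta-1)}$. First I would fix a sequence $c_N \downarrow \sqrt{2\beta/(\beta-1)}$, say $c_N = \sqrt{2\beta/(\beta-1)} + 1/N$. For each $N$, Proposition~\ref{le:upp} furnishes a $\mathbb{P}_\nu$-a.s.\ positive random variable $\delta_N$ such that $H^t(s,t) \leq c_N h(t-s)$ for all $0\leq s<t\leq T$ with $t-s \leq \delta_N$. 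Because a countable intersection of almost-sure events is almost sure, outside a single $\mathbb{P}_\nu$-null set every $\delta_N$ is strictly positive.

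Next I would translate this into the $\limsup$ statement. On the good event, fix $N$; for every $\varepsilon \in (0,\delta_N)$ and every $t \in [\varepsilon, T]$ we may take $s = t-\varepsilon$, so that $t-s = \varepsilon \leq \delta_N$ and hence $H^t(t-\varepsilon,t) \leq c_N h(\varepsilon)$. Dividing by $h(\varepsilon) > 0$ (valid for small $\varepsilon$, where $h$ is positive and increasing) and taking the supremum over $t \in [\varepsilon,T]$ gives
\beqnn
\sup_{\varepsilon \leq t \leq T} \frac{H^t(t-\varepsilon,t)}{h(\varepsilon)} \leq c_N
\quad\text{whenever } 0 < \varepsilon < \delta_N.
\eeqnn
Letting $\varepsilon \to 0+$ shows the $\limsup$ is at most $c_N$ for every $N$, and letting $N \to \infty$ yields the bound $\sqrt{2\beta/(\beta-1)}$. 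The quantitative tail bound $\mathbb{P}_\nu(\delta \leq \rho) \leq \tilde c_1 \rho^{\tilde c_2}$ from Proposition~\ref{le:upp} is not strictly needed for this qualitative conclusion, but it guarantees $\delta_N > 0$ a.s.\ and could alternatively be invoked through a Borel--Cantelli argument along a geometric sequence $\varepsilon_k = \theta^k$ if one preferred to avoid quoting the a.s.\ positivity directly.

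I do not expect any serious obstacle here: all the difficulty has already been absorbed into Proposition~\ref{le:upp}, whose proof combines the subinterval estimate of Lemma~\ref{le:mod}, the L\'evy-type chaining over grid points, and the coalescent block-count control. The only point requiring a little care is the interchange of the countably many almost-sure events indexed by $N$, together with ensuring that the approximation $s = t-\varepsilon$ is admissible for the whole range $t \in [\varepsilon, T]$ (which it is, since then $s \geq 0$ and $t-s = \varepsilon$). If anything is delicate, it is purely bookkeeping: making sure $\varepsilon$ is small enough that $h$ is positive and increasing, and that $\varepsilon < \delta_N$ forces the Proposition's hypothesis $t-s \leq \delta_N$ for every eligible $t$ simultaneously. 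This is immediate because the bound in Proposition~\ref{le:upp} is uniform over all pairs $(s,t)$ with $t-s \leq \delta$, not merely for a single such pair.
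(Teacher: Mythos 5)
Your proposal is correct and is exactly the deduction the paper intends: the paper derives Lemma \ref{th:global1} as an immediate consequence of Proposition \ref{le:upp}, and your argument---taking $c_N\downarrow\sqrt{2\beta/(\beta-1)}$, intersecting the countably many almost-sure events on which $\delta_N>0$, and setting $s=t-\ep$ for $\ep<\delta_N$---is the standard quantifier bookkeeping that the paper leaves implicit. No gaps; the tail bound on $\delta$ is indeed not needed for this qualitative statement.
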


\subsubsection{Lower bounds for the global modulus of continuity}
\begin{lemma}\label{th:global2}%\label{th:global}
Assume that either  $\La(\{0\})>0$ or both $\La(\{0\})=0$ and Condition {\ref{eq:laup}} hold for $1<\beta<2$.
Then for any $T>0$ and any $\nu\in M_1(\mathbb{R}^d)$, we have $\mathbb{P}_\nu$-a.s.
	\begin{equation*}\label{uniform_upper}
		\limsup_{\ep\to 0+}\sup_{\ep\leq t\le T }\frac{H^t\({t-\ep,t}\)}{h(\ep)}\geq\sqrt{\frac{2\beta}{\beta-1}}.
	\end{equation*}

\end{lemma}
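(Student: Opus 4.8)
The plan is to complement the upper bound of Lemma \ref{th:global1} by exhibiting, along a sequence $\ep_k\downarrow 0$, infinitely many pairs of nearby times at which the ancestral displacement is almost as large as $\sqrt{2\be/(\be-1)}\,h(\ep_k)$. Fix an arbitrary $c<\sqrt{2\be/(\be-1)}$; it suffices to prove that $\mathbb{P}_\nu$-a.s.\ the $\limsup$ is at least $c$, and then let $c$ increase to $\sqrt{2\be/(\be-1)}$ along a countable sequence. The interesting range is $\sqrt{2/(\be-1)}<c<\sqrt{2\be/(\be-1)}$; for smaller $c$ a single short interval already suffices, as in the local lower bound.

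First I would discretize. For $\ep_k=\theta^k$ (any fixed $\theta\in(0,1)$) partition $[0,T]$ by the grid $t_i=i\ep_k$, $1\le i\le\lfloor T/\ep_k\rfloor$, and note that
$$\sup_{\ep_k\le t\le T} H^t(t-\ep_k,t)\ \ge\ \max_{1\le i\le\lfloor T/\ep_k\rfloor} H^{t_i}(t_i-\ep_k,t_i).$$
By the lookdown construction, the event that window $i$ ``succeeds'', i.e.\ that $H^{t_i}(t_i-\ep_k,t_i)>c\,h(\ep_k)$, is measurable with respect to the Brownian increments and the birth (Poisson) points falling in $[t_i-\ep_k,t_i]$ only: recall from (\ref{eq:max}) that $H^{t_i}(t_i-\ep_k,t_i)$ is a maximum of differences of spatial positions over that window, and that both the recovered coalescent $\Pi^{t_i}$ and the number of ancestral blocks are governed by the in-window dynamics. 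Hence the success events are independent across the disjoint windows.

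Next I would bound the failure probability of a single window. On the event that the number of ancestral lineages at depth $\ep_k$ is at least $N^{*}:=e^{-24\ep_k^{\al^{*}}}v(\ep_k)$, I would select one descendant lineage per block; conditionally on the genealogy, the spatial displacements of these distinct lineages over $[t_i-\ep_k,t_i]$ are independent $d$-dimensional Brownian increments of length $\ep_k$, so $H^{t_i}(t_i-\ep_k,t_i)$ dominates the maximum of their magnitudes. Writing $p:=\mathbb{P}\big(|B(1)|>c\sqrt{\log(1/\ep_k)}\big)$ and using the lower Gaussian tail of Lemma \ref{le:BM}, $p\gtrsim(\log(1/\ep_k))^{(d-2)/2}\ep_k^{c^2/2}$, the chance that no selected lineage exceeds $c\,h(\ep_k)=c\sqrt{\ep_k\log(1/\ep_k)}$ is at most $(1-p)^{N^{*}}$. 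Combining this with Proposition \ref{pro:ancestor} and the lower bound $v(\ep_k)\gtrsim\ep_k^{-1/(\be-1)}$ of Proposition \ref{pro:v} (via (\ref{eq:v2}) when $\La(\{0\})>0$ and $\be=2$, and via (\ref{eq:v1}) under Condition \ref{eq:laup} when $\be\in(1,2)$) gives
$$\mathbb{P}_\nu(\text{window } i \text{ fails})\ \le\ (1-p)^{N^{*}}+O\big(\ep_k^{1-2\al^{*}}\big).$$
Multiplying over the $\lfloor T/\ep_k\rfloor$ independent windows and using $N^{*}\gtrsim\ep_k^{-1/(\be-1)}$ gives, schematically,
$$\mathbb{P}_\nu(E_k^c)\ \le\ \exp\!\Big(-c'\,(T/\ep_k)\,p\,N^{*}\Big)\ \lesssim\ \exp\!\Big(-c''\,\ep_k^{\,c^2/2-\be/(\be-1)}\,(\log(1/\ep_k))^{(d-2)/2}\Big),$$
where $E_k$ is the event that some window succeeds. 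Since $c<\sqrt{2\be/(\be-1)}$ forces the exponent $c^2/2-\be/(\be-1)<0$, the right-hand side tends to $0$ and is summable in $k$ for geometric $\ep_k$; the first Borel--Cantelli lemma then shows that $\mathbb{P}_\nu$-a.s.\ $E_k$ holds for all large $k$, whence $\limsup_{\ep\to0+}\sup_{\ep\le t\le T}H^t(t-\ep,t)/h(\ep)\ge c$. Letting $c\uparrow\sqrt{2\be/(\be-1)}$ finishes the proof.

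The main obstacle I anticipate is preventing the additive error $O(\ep_k^{1-2\al^{*}})$ from Proposition \ref{pro:ancestor} from swamping the product over the $\Theta(T/\ep_k)$ windows: although each term is small, their naive sum is $O(\ep_k^{-2\al^{*}})\to\infty$. This forces me to verify that $pN^{*}\approx\ep_k^{c^2/2-1/(\be-1)}$ dominates $\ep_k^{1-2\al^{*}}$, which can be arranged by taking the exponent $\al^{*}$ in Proposition \ref{pro:ancestor} small enough — possible precisely because $0<c^2/2-1/(\be-1)<1$ in the relevant range $\sqrt{2/(\be-1)}<c<\sqrt{2\be/(\be-1)}$ — so that each window-failure probability is genuinely bounded by $1-\tfrac12 pN^{*}$. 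A secondary point requiring care is the justification, from the lookdown dynamics, that distinct recovered lineages move as independent Brownian motions over a window and that the entire success event depends only on in-window randomness, which underpins both the Gaussian estimate and the cross-window independence.
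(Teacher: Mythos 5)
Your proposal is correct and follows essentially the same route as the paper's proof: partition $[0,T]$ into windows of geometric length, use the lookdown representation to get independence of the maximal dislocations over disjoint windows, lower-bound the number of ancestral blocks per window via Propositions \ref{pro:ancestor} and \ref{pro:v}, apply the Gaussian lower tail of Lemma \ref{le:BM} to independent lineages, and choose $\al^{*}$ small enough (exactly the paper's condition (\ref{eq:a*}), which gives $c^2/2-1/(\be-1)<1-2\al^{*}$) so that the $O(\ep_k^{1-2\al^{*}})$ error from Proposition \ref{pro:ancestor} does not swamp the product over the $\Theta(T/\ep_k)$ windows. Your explicit invocation of Borel--Cantelli to pass from $\mathbb{P}_\nu(E_k^c)\to 0$ (with summable bounds) to the almost-sure statement is a minor, and welcome, tightening of the paper's concluding step.
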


\begin{proof}%[{\blue Proof of Lemma \ref{th:global2}}]
To show the lower bound, we only need to find a case in which the lower bound is attained. Let's divide the time interval $[0,T]$ into  subintervals of equal length $2^{-n}$. %, for any  $0<c<\sqrt{2\be/(\be-1)}$ let
For any $\be\in(1,2]$ and $0<c<\sqrt{{2\be}/{(\be-1)}}$,  choose a constant $\al^{*}$ satisfying
\beqlb\label{eq:a*}
\al^{*}\in\(0,\({\be}/({2(\be-1)})-{c^2}/{4}\)\wedge {1}/{2}\).
\eeqlb
Let $ C(\be,A,\ep,c,\al^{*})$ be a constant such that
\beqlb\label{eq:C}
C(\be,A,\ep,c,\al^{*})\leq\begin{cases} e^{-24\times 2^{-\al^{*}}}C(A,\ep,\be),\text{\,\,\,\,if\,\,\,\,}\be\in(1,2),\cr
e^{-24\times 2^{-\al^{*}}}\times 2\(\La([0,1])\)^{-1},\text{\,\,\,\,if\,\,\,\,}\be=2,
\end{cases}
\eeqlb
where $C(A,\ep,\be)$ is the constant from (\ref{eq:v1}).
For notational simplicity, write
\beqnn
E_k\ar:=\ar\left\{ N\((k-1)2^{-n},k2^{-n}\)\geq C(\beta,A,\ep,c,\al^{*})2^{\frac{n}{\beta-1}}\right\}
\eeqnn
with $k=1,2,\ldots,T2^n$. %Recall the definition of $N(s, t)$ in (\ref{def:N}).
Proposition \ref{pro:ancestor} implies that for $n$ large enough,
\beqnn
\mathbb{P}_{\nu}\(N\((k-1)2^{-n},k2^{-n}\)\geq e^{-24\times 2^{-n\al^{*}}}v(2^{-n})\)\geq 1-O\(2^{-n(1-2\al^{*})}\).
\eeqnn
By  Proposition \ref{pro:v},
\beqnn
e^{-24\times 2^{-n\al^{*}}}v(2^{-n})\geq C(\be,A,\ep,c,\al^{*})2^{\frac{n}{\be-1}}
\eeqnn
holds for $n$ large enough. Therefore,
\beqnn
\mathbb{P}_{\nu}\(E_k\)\geq 1-O\(2^{-n(1-2\al^{*})}\)
\eeqnn
holds for $n$ large enough. By the lookdown representation,  over
nonoverlapping time intervals the maximal dislocations of the individuals from their respective ancestors  are independent. We then  have
\beqnn
\mathbb{P}_{\nu}\({F_n^{c}}\)\ar=\ar\prod_{1\leq k\leq T2^n}\mathbb{P}_{\nu}\(H^{k2^{-n}}\({\(k-1\)2^{-n},k2^{-n}}\)\leq ch\(2^{-n}\)\)\cr
\ar\leq\ar\prod_{1\leq k\leq T2^n}\bigg[\mathbb{P}_{\nu}\(H^{k2^{-n}}\({\(k-1\)2^{-n},k2^{-n}}\)\leq ch\(2^{-n}\),E_k\)+\mathbb{P}\( E_k^c\)\bigg]\cr
\ar\leq\ar\prod_{1\leq k\leq T2^n}\bigg[\bigg(\mathbb{P}\(|B(1)|\leq c\sqrt{n\log2}\)\bigg)^{\big\lfloor { C(\beta, A,\ep,c,\al^{*})}2^{\frac{n}{\beta-1}}\big\rfloor}+O\(2^{-n(1-2\al^{*})}\)\bigg]\cr
\ar\leq\ar\Bigg[\bigg(1-\mathbbm{c}_1(d)c^{d-2}(n\log2)^{\frac{d}{2}-1}2^{-\frac{c^2n}{2}}\bigg)
^{\big\lfloor  C(\beta,A,\ep,c,\al^{*})2^{\frac{n}{\beta-1}}\big\rfloor}+O\(2^{-n(1-2\al^{*})}\)\Bigg]^{T2^n},
\eeqnn
where we have used  (\ref{eq:bm}) in  the last line. Therefore,
\beqnn
\ar\ar\lim_{n\rightarrow\infty}\mathbb{P}_{\nu}\({F_n^{c}}\)\cr
\ar\ar\quad\leq\lim_{n\rightarrow\infty}
\Bigg[\bigg(1-\mathbbm{c}_1(d)c^{d-2}(n\log2)^{\frac{d}{2}-1}2^{-\frac{c^2n}{2}}\bigg)
^{C(\beta,A,\ep,c,\al^{*})2^{\frac{n}{\beta-1}}}+O\(2^{-n(1-2\al^{*})}\)\Bigg]^{T2^n}.
\eeqnn

In case of $c<\sqrt{\frac{2}{\beta-1}}$, the above limit has the form of $0^{\infty}$ and it is exactly equal to $0$.

For $\sqrt{\frac{2}{\beta-1}}\leq c<\sqrt{2\(1+\frac{1}{\beta-1}\)}=\sqrt{\frac{2\beta}{\beta-1}},$
 we proceed to show that the limit is still $0$.
  %the same as the proof of Theorem \ref{th:kingman} as detailed below.
By Taylor's formula,
\beqnn
\ar\ar\log\bigg[\bigg(1-\mathbbm{c}_1(d)c^{d-2}(n\log2)^{\frac{d}{2}-1}2^{-\frac{c^2n}{2}}\bigg)^{ C(\beta, A,\ep,c,\al^{*})2^{\frac{n}{\beta-1}}}\bigg]\cr
\ar\ar\quad=- C(\beta, A,\ep,c,\al^{*})2^{\frac{n}{\beta-1}}\bigg[\mathbbm{c}_1(d)c^{d-2}(n\log2)^{\frac{d}{2}-1}2^{-\frac{c^2n}{2}}+O\Big(n^{d-2}2^{-c^2n}\Big)\bigg].
\eeqnn
Then
\beqnn
\lim_{n\rightarrow\infty}\mathbb{P}_{\nu}\({F_n^{c}}\)\ar\leq\ar\lim_{n\rightarrow\infty}
\bigg[\exp\bigg\{-{C(\beta, A,\ep,c,d,\al^{*})}n^{\frac{d}{2}-1}2^{-n\big(\frac{c^2}{2}-\frac{1}{\beta-1}\big)}\bigg\}
+O\(2^{-n(1-2\al^{*})}\)\bigg]^{T2^n}.
\eeqnn
If $c=\sqrt{\frac{2}{\beta-1}}$, the dominant term in the above braces is $n^{\frac{d}{2}-1}$. If $d= 2$, the limit has the form of $q^{\infty}$ with $0<q<1$ and consequently, it is equal to $0$. If $d<2$, then $d/2-1<0$ and the limit equals to $\lim_{n\rightarrow\infty}[1-{C(\beta, A,\ep,c,d,\al^{*})}n^{\frac{d}{2}-1}]^{T2^n}=0$. If $d> 2$,  then $d/2-1>0$ and the term in the braces tends to $-\infty$ as $n\rightarrow\infty$. Therefore, the limit has the form of $0^{\infty}$ and  it is also equal to $0$.
If $\sqrt{\frac{2}{\beta-1}}<c<\sqrt{\frac{2\beta}{\beta-1}}$, we have
$0<\frac{c^2}{2}-\frac{1}{\beta-1}<1$. This fact, combined with (\ref{eq:a*}) gives
%\beqnn
$\frac{c^2}{2}-\frac{1}{\beta-1}\,<\, 1-2\al^{*}.$
%\eeqnn
Therefore,
\beqnn
\lim_{n\rightarrow\infty}\mathbb{P}_{\nu}\({F_n^{c}}\)\ar\leq\ar\lim_{n\rightarrow\infty}
\left[1-C(\beta, A,\ep,c,d,\al^{*})n^{\frac{d}{2}-1}2^{-n\big(\frac{c^2}{2}-\frac{1}{\beta-1}\big)}\right]^{T2^n}\,=\,0.
\eeqnn
The proof is complete. %for (II).
\end{proof}

\subsubsection{Upper bounds for the local left modulus of continuity}
The local left modulus of continuity for $\La$-Fleming-Viot ancestry process follows from an argument that is along the same line of the global modulus of continuity, but  is simpler. Instead of considering the grid points on the whole interval, we only need to carry out the partitions on a left neighborhood of a fixed time.
\begin{proposition}\label{le:localupp}
Under Condition \ref{con2},
if $c>\sqrt{2/(\be-1)}$, then for each fixed $t>0$  and any $\nu\in M_1(\mathbb{R}^d)$, $\mathbb{P}_{\nu}$-a.s. there exists a positive random variable $\delta$ such that for all $s$ with $0< t-s\leq \delta$,
\beqnn
H^t\({s,t}\)\leq ch(t-s).
\eeqnn
Moreover, there are positive constants $\tilde{c}_4$, $\tilde{c}_5$ and $\tilde{c}_6$ such that
\beqnn
\mathbb{P}_{\nu}\(\delta\leq\rho\)\leq \tilde{c}_4\rho^{\tilde{c}_5}\text{\,\,\,\,\,\,\,\,if\,\,}0\leq\rho\leq\tilde{c}_6.
\eeqnn
\end{proposition}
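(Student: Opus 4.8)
The plan is to reproduce the scheme of Proposition \ref{le:upp}, but anchored at the fixed time $t$ so that no union over a spatial grid of starting times is needed; this is exactly what lowers the critical constant from $\sqrt{2\be/(\be-1)}$ to $\sqrt{2/(\be-1)}$. First I would fix $c>\sqrt{2/(\be-1)}$, choose $\bar c\in(\sqrt{2/(\be-1)},c)$, and pick $\epsilon,\theta\in(0,1)$ small and $M$ large exactly as in Proposition \ref{le:upp}, so that $\theta M>2$ and inequality \eqref{eq:con3} holds. Set $\bar n_0=\max\{n_0(\epsilon,j/M,\theta):1\le j\le M\}$ with $n_0$ taken from Lemma \ref{le:mod} \ref{le:modb}.

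Next, since $t$ is fixed, I would define the local bad event
$$D_n^{\mathrm{loc}}:=\bigcup_{k\ge n}\bigcup_{j=1}^{M}B_k\big(t,j/M,\theta,\bar c\big),$$
dropping both the union over the starting points $m$ and the union over the offsets $i$ that appear in the definition of $D_n$ in Proposition \ref{le:upp}. Applying Lemma \ref{le:mod} \ref{le:moda} termwise and summing over $k$, the estimate becomes
$$\mathbb{P}_\nu(D_n^{\mathrm{loc}})\le M\sum_{k\ge n}\max_{1\le j\le M}\Big[C\big(\bar c,d,\be,j/M,\theta\big)\theta^{(\frac{\bar c^2}{2}-\frac{1}{\be-1})k}k^{\frac{2}{\be-1}+\frac{d}{2}-1}+\bar C\big(C,j/M,\theta\big)e^{-k/\theta}\Big].$$
The essential point is that the factor $T\theta^{-k}$ of \eqref{eq:J} is now absent, so the exponent governing the geometric decay is $\frac{\bar c^2}{2}-\frac{1}{\be-1}$ rather than $\frac{\bar c^2}{2}-\frac{1}{\be-1}-1$. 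Since $\bar c>\sqrt{2/(\be-1)}$ forces $\frac{\bar c^2}{2}-\frac{1}{\be-1}>0$, the $k$-series converges and its tail is dominated by its first term, giving
$$\mathbb{P}_\nu(D_n^{\mathrm{loc}})\le \hat C\Big[\theta^{(\frac{\bar c^2}{2}-\frac{1}{\be-1})n}n^{\frac{2}{\be-1}+\frac{d}{2}-1}\vee(\theta e^{1/\theta})^{-n}\Big].$$

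I would then set $J:=\min\{n:D_n^{\mathrm{loc}}\text{ does not occur}\}\vee\bar n_0$ and $\delta:=\theta^{*}\wedge\theta^{J}$, with $\theta^{*}$ from Lemma \ref{le:mod1}. The bound $\mathbb{P}_\nu(J\ge n)\le\mathbb{P}_\nu(D_{n-1}^{\mathrm{loc}})$ together with the displayed estimate and \eqref{eq:theta} yields the power-law tail $\mathbb{P}_\nu(\delta\le\rho)\le\tilde c_4\rho^{\tilde c_5}$, as in Proposition \ref{le:upp}. For the pathwise bound, given $s$ with $0<t-s\le\delta$ I would choose $n\ge J$ with $\theta^{n+1}<t-s\le\theta^n$, let $j$ be the largest index with $(j/M)\theta^n\le t-s$ (so $2\le j\le M$ because $\theta M>2$), and put $s_0:=t-(j/M)\theta^n$, whence $0\le s_0-s<\theta^n/M$. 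Using the consistency of the ancestry process I would split
$$H^t(s,t)\le H^{s_0}(s,s_0)+H^t(s_0,t),$$
bounding the residual term by $H^{s_0}(s,s_0)\le C^{*}h(\theta^n/M)\le C^{*}\sqrt{\tfrac{\log M}{\theta M\log(1/\theta)}}\,h(t-s)$ via Lemma \ref{le:mod1} and the monotonicity of $h$ on $(0,e^{-1}]$, and bounding $H^t(s_0,t)$ by Lemma \ref{le:mod} \ref{le:modb} (applicable since $D_n^{\mathrm{loc}}$, hence $B_n(t,j/M,\theta,\bar c)$, does not occur) by $\bar c(1+\epsilon)(1-\theta)^{1/2}(1-\theta^{1/2})^{-1}h(t-s)$. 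Adding and invoking \eqref{eq:con3} gives $H^t(s,t)<c\,h(t-s)$.

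The bulk of the estimates are inherited almost verbatim from Proposition \ref{le:upp}. The one conceptual point—where $\sqrt{2\be/(\be-1)}$ is replaced by $\sqrt{2/(\be-1)}$—is that anchoring the grid at the fixed $t$ legitimately removes the union over starting points $m$ (equivalently, the factor $T\theta^{-k}$ in \eqref{eq:J}), so that the $k$-series converges already when $\frac{\bar c^2}{2}-\frac{1}{\be-1}>0$; I expect verifying this bookkeeping to be the main thing to get right. The decomposition $H^t(s,t)\le H^{s_0}(s,s_0)+H^t(s_0,t)$ is merely the three-term split of Proposition \ref{le:upp} with $t_0=t$, and the conversion of the $J$-tail into a power law in $\rho$ is routine.
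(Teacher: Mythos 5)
Your proposal is correct and follows essentially the same route as the paper's proof: anchor the partition at the fixed time $t$, bound the bad events $B_k(t,j/M,\theta,\bar{c})$ via Lemma \ref{le:mod} \ref{le:moda} without the entropy factor $T\theta^{-k}$ (which is exactly what lowers the constant to $\sqrt{2/(\beta-1)}$), set $\delta=\theta^{*}\wedge\theta^{J}$, and use the two-term split $H^t(s,t)\leq H^{s_0}(s,s_0)+H^t(s_0,t)$ with Lemma \ref{le:mod1} controlling the residual piece. The only deviations are harmless: your bad event omits the shifted anchors $t-(i/M)\theta^k$ that the paper's $D^{*}_{n,t}$ includes but its own pathwise argument never uses, you invoke the global condition (\ref{eq:con3}) (with $2C^{*}$) where the sharper local condition (\ref{eq:localcon3}) (with a single $C^{*}$) suffices, and your tail bound $(\theta e^{1/\theta})^{-n}$ is slightly weaker than the paper's $e^{-n/\theta}$ but still decays geometrically and yields the same power-law estimate for $\delta$.
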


\begin{proof}
For any $c>\sqrt{2/(\be-1)}$, let $\bar{c}\in(\sqrt{2/(\be-1)},c)$ and choose $\epsilon,\,\theta\in(0,1)$ small enough so that
\beqnn
\bar{c}(1+\epsilon)(1-\theta)^{\frac12}(1-\theta^{\frac12})^{-1}<c.
\eeqnn
Then choose $M$ large enough such that $\theta M>2$ and
\beqlb\label{eq:localcon3}
C^{*}\sqrt{\frac{\log M}{M\theta\log({1}/{\theta})}}+\bar{c}(1+\epsilon)(1-\theta)^{\frac12}(1-\theta^{\frac12})^{-1}<c,
\eeqlb
where $C^{*}$ is the constant in (\ref{eq:mod1}).
For each fixed $t>0$, define
%\begin{small}
\beqnn
\ar\ar D^{*}_{n,t}:=\bigcup_{k=n}^{\infty}\bigcup_{i=0}^{M-1}\bigcup_{j=1}^{M-i}
B_k\Big(t-\frac{i}{M}\theta^k,\frac{j}{M},\theta,\bar{c}\Big).%\cr
\eeqnn
%\end{small}
Set
\beqnn
\bar{J}\equiv\bar{J}\(\epsilon,M,\theta,t\):=\min\left\{n: D^{*}_{n,t}\,\text{does not occur}\right\}\vee \bar{n}_0,
\eeqnn
where $\bar{n}_0$ is defined by (\ref{eq:n_0}).
If $n>\bar{n}_0$, Lemma \ref{le:mod} \ref{le:moda} implies
\beqlb\label{eq:barJ}
\ar\ar\mathbb{P}_{\nu}\(\bar{J}\geq n\)\,\leq\,\mathbb{P}_{\nu}\(D^{*}_{n-1,t}\)\cr
\ar\ar\quad\leq\sum_{k=n-1}^{\infty}M^2\max\Big\{C\big(\bar{c},d,\be,\frac{j}{M},\theta\big): j=1,\ldots,M\Big\}\theta^{\big(\frac{\bar{c}^2}{2}-\frac{1}{{\be-1}}\big)k}k^{\frac{2}{{\be-1}}+\frac{d}{2}-1}\cr
\ar\ar\qquad+\sum_{k=n-1}^{\infty}M^2\max\Big\{\bar{C}\big(C,\frac{j}{M},\theta\big): j=1,\ldots,M\Big\}e^{-\frac{k}{\theta}}\cr
\ar\ar \quad\leq\hat{C}\(\bar{c},C,d,\be,M,\theta\)\Big[\theta^{\(\frac{\bar{c}^2}{2}-\frac{1}{{\be-1}}\)n}n^{\frac{2}{{\be-1}}+\frac{d}{2}-1}\vee\big( e^{-\frac{n}{\theta}}\big)\Big].
\eeqlb
%which is summable  with respect to $n$ and consequently, with probability one $\bar{J}$ is finite.
Let $\delta:=\theta^{*}\wedge \theta^{\bar{J}}$ with $\theta^{*}$  given in Lemma \ref{le:mod1} satisfying $\theta^{*}\leq e^{-1}$. Equation (\ref{eq:barJ}) yields
{\small
\beqnn
\ar\ar\log\mathbb{P}_{\nu}\(\theta^{\bar{J}}\leq\rho\)\,=\,\log\mathbb{P}_{\nu}\(\bar{J}\geq\log\rho/\log\theta\)\cr
\ar\ar\quad\leq\log\hat{C}\(\bar{c},C,d,\be,M,\theta\)\cr
\ar\ar\qquad+\Bigg[\bigg(\Big(\frac{\bar{c}^2}{2}-\frac{1}{{\be-1}}\Big)\log\rho
+\Big(\frac{2}{{\be-1}}+\frac{d}{2}-1\Big)\log\Big(\frac{\log\rho}{\log\theta}\Big)\bigg)\bigvee \bigg(-\frac{\log\rho}{\log\theta}\log\Big( e^{\frac{1}{\theta}}\Big)\bigg)\Bigg].
\eeqnn
}
Combining with (\ref{eq:theta}), there are positive constants $\tilde{c}_4$, $\tilde{c}_5$ and $\tilde{c}_6$ such that
\beqlb\label{eq:delta}
\mathbb{P}_{\nu}\(\delta\leq\rho\)\leq\mathbb{P}_{\nu}(\theta^{*}\leq\rho)+\mathbb{P}_{\nu}(\theta^{\bar{J}}\leq\rho)\leq \tilde{c}_4\rho^{\tilde{c}_5}\text{\,\,\,\,\,\,\,\,if\,\,}0\leq\rho\leq\tilde{c}_6.
\eeqlb
Given fixed $t>0$, for all $s$ with $0<t-s\leq\delta$, choose $n$ such that
%\beqnn
$\theta^{n+1}<t-s\leq \theta^{n}.$
%\eeqnn
Note that the value of $n$ is greater than or equal to $\bar{J}$ since $\delta\leq \theta^{\bar{J}}$ and so $D^{*}_{n,t}$ does not occur.
Then $s$ is approximated by grid points. Choose
$i\in\{1,\ldots,M\}$  such that
\beqnn
t-\((i+1)/M\)\theta^n<s\leq t-\(i/M\)\theta^n=s_0.
\eeqnn
The upper bound for the maximal dislocation between $s$ and $s_0$ is derived by
Lemma \ref{le:mod1}. Consequently, as in the derivation of (\ref{eq:mod2}), we obtain
\beqlb\label{eq:localmod2}
H^{s_0}\({s,s_0}\)
\ar\leq\ar C^{*}\sqrt{\frac{\log(M)}{\theta M\log(1/\theta)}}h\(t-s\).
\eeqlb
The maximal dislocation between grid points $s_0$ and $t$
follows from Lemma \ref{le:mod} \ref{le:modb} by setting $u=i/M$ so that
\beqlb\label{eq:localt}
H^{t}\({s_0,t}\)\ar\leq\ar\bar{c}(1+\epsilon)(1-\theta)^{\frac12}(1-\theta^{\frac12})^{-1}h\(t-s_0\)\cr
\ar\leq\ar\bar{c}(1+\epsilon)(1-\theta)^{\frac12}(1-\theta^{\frac12})^{-1}h\(t-s\),
\eeqlb
where the last line follows from the monotonicity of  $h(x)$ on $(0,e^{-1}]$. The lookdown representation implies that
\beqnn
H^t\({s,t}\)\leq H^{s_0}\({s,s_0}\)+H^{t}\({s_0,t}\).
\eeqnn
Combining (\ref{eq:localcon3}), (\ref{eq:localmod2}) and (\ref{eq:localt}), we have
\beqnn
H^t\({s,t}\)\ar\leq \ar\bigg[C^{*}\sqrt{\frac{\log(M)}{\theta M\log(1/\theta)}}+\bar{c}(1+\epsilon)(1-\theta)^{\frac12}(1-\theta^{\frac12})^{-1}\bigg]h\(t-s\)%\cr
%\ar<\ar
<ch\(t-s\).
\eeqnn
The proof is complete.
\end{proof}

Combining with the result in Proposition \ref{le:localupp},
we can easily obtain the upper bound of local left modulus of continuity for the ancestry process as follows.
\begin{lemma}\label{th:localleft1}%\label{th:localleft}
Assume that Condition \ref{con2} holds. Then for each fixed $t>0$  and any $\nu\in M_1(\mathbb{R}^d)$, we have $\mathbb{P}_\nu$-a.s.
	\begin{equation*}\label{local_lower_left}
		\limsup_{\ep\to 0+}\frac{H^t\({t-\ep,t}\)}{h(\ep)}\leq\sqrt{\frac{2}{\beta-1}}.
	\end{equation*}		
\end{lemma}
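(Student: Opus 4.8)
The plan is to deduce this upper bound directly from Proposition \ref{le:localupp}, which already carries the essential estimate under the same hypothesis (Condition \ref{con2}). Fix $t>0$ and $\nu\in M_1(\mathbb{R}^d)$. For any single constant $c>\sqrt{2/(\be-1)}$, Proposition \ref{le:localupp} guarantees that $\mathbb{P}_\nu$-a.s.\ there is a (random) $\delta>0$ with $H^t(s,t)\le c\,h(t-s)$ whenever $0<t-s\le\delta$. Writing $\ep=t-s$, this says $H^t(t-\ep,t)/h(\ep)\le c$ for all $0<\ep\le\delta$, and hence on this almost-sure event
$$
\limsup_{\ep\to 0+}\frac{H^t(t-\ep,t)}{h(\ep)}\le c.
$$

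First I would record that the above holds for every fixed $c>\sqrt{2/(\be-1)}$, each with its own probability-one event. The only point requiring care is that the exceptional null set a priori depends on $c$, so one cannot simply let $c\downarrow\sqrt{2/(\be-1)}$ inside a single realization. To circumvent this, I would choose a countable sequence $c_k\downarrow\sqrt{2/(\be-1)}$, for instance $c_k=\sqrt{2/(\be-1)}+1/k$, and apply the previous step to each $c_k$. Intersecting the countably many probability-one events yields a single event of full $\mathbb{P}_\nu$-measure on which the displayed bound holds with $c$ replaced by $c_k$ simultaneously for all $k$.

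On this full-measure event I would then take the infimum over $k$, giving
$$
\limsup_{\ep\to 0+}\frac{H^t(t-\ep,t)}{h(\ep)}\le \inf_{k}c_k=\sqrt{\frac{2}{\be-1}},
$$
which is precisely the claimed bound.

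Since the substantive work --- the localized partitioning scheme, the summability of $\mathbb{P}_\nu(B_n)$ inherited from Lemma \ref{le:mod}, and the polynomial tail control \eqref{eq:delta} of the random variable $\delta$ --- is already completed in Proposition \ref{le:localupp}, there is no genuine analytic obstacle at this stage. The proof is a routine passage from the ``for each $c$'' statement to the limsup bound via a countable exhaustion, and the only thing to be careful about is the order of quantifiers, namely handling the null set through the countable sequence $\{c_k\}$ rather than through the uncountable family of admissible $c$'s.
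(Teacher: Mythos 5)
Your proof is correct and takes essentially the same route as the paper, which states the lemma as an immediate consequence of Proposition \ref{le:localupp}. Your explicit handling of the $c$-dependent null sets via a countable sequence $c_k\downarrow\sqrt{2/(\beta-1)}$ is exactly the (routine) step the paper leaves implicit.
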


\subsubsection{Lower bounds for the local left modulus of continuity}
\begin{lemma}\label{th:localleft2}%\label{th:localleft}
Assume that either $\La(\{0\})>0$ or both $\La(\{0\})=0$ and  Condition {\ref{eq:laup}} hold for $1<\beta<2$.
Then for each fixed $t>0$ and any $\nu\in M_1(\mathbb{R}^d)$, we have $\mathbb{P}_\nu$-a.s.
	\begin{equation*}\label{local_lower_left}
	\limsup_{\ep\to 0+}\frac{H^t\({t-\ep,t}\)}{h(\ep)}\geq\sqrt{\frac{2}{\beta-1}}.
   \end{equation*}	
%\end{enumerate}		
\end{lemma}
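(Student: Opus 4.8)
\emph{Proof proposal.} The plan is to adapt the lower bound argument of Lemma~\ref{th:global2}, the only change being that, since $t$ is now fixed, we replace the grid on $[0,T]$ by a single nested sequence of intervals $[t-2^{-n},t]$. Losing the $\approx 2^{n}$ parallel intervals of the global argument is precisely what lowers the attainable constant from $\sqrt{2\be/(\be-1)}$ to $\sqrt{2/(\be-1)}$: with a single interval per scale we can only push $c$ up to the value making $c^2/2<1/(\be-1)$, whereas the extra $2^n$ independent intervals in the global case relax this to $c^2/2<\be/(\be-1)$.

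Fix $t>0$ and $c<\sqrt{2/(\be-1)}$, so that $c^2/2<1/(\be-1)$, and choose $\al^{*}\in(0,1/2)$. Let $C(\be,A,\ep,c,\al^{*})$ be the constant of (\ref{eq:C}) and set
$$E_n:=\Big\{N(t-2^{-n},t)\ge C(\be,A,\ep,c,\al^{*})\,2^{n/(\be-1)}\Big\}.$$
Exactly as in the proof of Lemma~\ref{th:global2}, Proposition~\ref{pro:ancestor} together with the lower bounds (\ref{eq:v1}) (when $\be\in(1,2)$) and (\ref{eq:v2}) (when $\be=2$, i.e.\ $\La(\{0\})>0$) of Proposition~\ref{pro:v} give $\mathbb{P}_\nu(E_n^{c})=O(2^{-n(1-2\al^{*})})$, which is summable since $\al^{*}<1/2$.

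The heart of the matter is the many-lineages lower bound. Recovering the ancestry at the fixed time $t$, the $N(t-2^{-n},t)$ ancestors present at time $t-2^{-n}$ occupy distinct levels, and, conditionally on the genealogy, their forward trajectories to time $t$ are independent spatial displacements over $[t-2^{-n},t]$, each distributed as $B(2^{-n})$ for a standard $d$-dimensional Brownian motion $B$; since each trajectory terminates at a particle alive at $t$, the maximum of these displacements is bounded above by $H^{t}(t-2^{-n},t)$. Writing $G_n:=\{H^{t}(t-2^{-n},t)\le c\,h(2^{-n})\}$ and arguing as for $F_n^{c}$ in the proof of Lemma~\ref{th:global2},
$$\mathbb{P}_\nu(G_n)\le \mathbb{P}_\nu(G_n,E_n)+\mathbb{P}_\nu(E_n^{c})\le\Big(\mathbb{P}\big(|B(1)|\le c\sqrt{n\log 2}\big)\Big)^{\lfloor C(\be,A,\ep,c,\al^{*})2^{n/(\be-1)}\rfloor}+O\big(2^{-n(1-2\al^{*})}\big).$$
By the tail bound (\ref{eq:bm}) the first term is dominated by $\exp\{-C' n^{d/2-1}2^{\,n(1/(\be-1)-c^2/2)}\}$, and since $1/(\be-1)-c^2/2>0$ this decays faster than any geometric sequence; hence $\sum_n\mathbb{P}_\nu(G_n)<\infty$.

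By the Borel--Cantelli lemma, $\mathbb{P}_\nu$-a.s.\ the event $G_n$ fails for all large $n$, i.e.\ $H^{t}(t-2^{-n},t)>c\,h(2^{-n})$ eventually, whence
$$\limsup_{\ep\to 0+}\frac{H^{t}(t-\ep,t)}{h(\ep)}\ge\limsup_{n\to\infty}\frac{H^{t}(t-2^{-n},t)}{h(2^{-n})}\ge c\qquad\mathbb{P}_\nu\text{-a.s.}$$
Intersecting these almost-sure events over a sequence $c\uparrow\sqrt{2/(\be-1)}$ yields the claim. The step requiring the most care is the many-lineages bound: one must verify, from the lookdown representation and the independence of the level-indexed Brownian motions, that on $E_n$ there genuinely are at least $\lfloor C\,2^{n/(\be-1)}\rfloor$ lineages whose displacements over $[t-2^{-n},t]$ are conditionally i.i.d.\ $\sim B(2^{-n})$ and are dominated by $H^{t}(t-2^{-n},t)$; this is inherited directly from the global proof. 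Conceptually, the feature that makes the fixed-time argument work is that no independence \emph{across} scales $n$ is needed, since one interval per scale already renders the failure probability summable --- precisely the reason the attainable constant is $\sqrt{2/(\be-1)}$.
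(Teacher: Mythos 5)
Your proposal is correct and follows essentially the same route as the paper's proof: the same events $E_{n,t}$ and $F_{n,t}$, the same use of Propositions \ref{pro:ancestor} and \ref{pro:v} with the constant from (\ref{eq:C}), and the same conditional-independence bound via (\ref{eq:bm}) on the dyadic scales $2^{-n}$. The only (harmless) difference is at the very end: you sum the bounds and invoke Borel--Cantelli to get that $F_{n,t}$ occurs for all large $n$, whereas the paper merely shows $\mathbb{P}_\nu(F_{n,t}^{c})\to 0$ and concludes via the resulting $\mathbb{P}_\nu(\limsup_n F_{n,t})=1$; both yield the claim after letting $c\uparrow\sqrt{2/(\beta-1)}$.
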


\begin{proof}%[{\blue Proof of Lemma \ref{th:localleft2}}]
The proof is similar to that of Lemma \ref{th:global2}. The main steps are presented as follows.
Given fixed $t>0$, set
\beqnn
F_{n,t}:=\left\{ H^{t}\(t-2^{-n},t\)>ch\(2^{-n}\)\right\}.
\eeqnn
For any $\be\in(1,2]$ and $\al^{*}\in(0,1/2)$,
let $ C(\be,A,\ep,c,\al^{*})$ be the constant given by (\ref{eq:C}).
Write
\beqnn
E_{n,t}\,=\,\left\{ N\(t-2^{-n},t\)\geq C(\beta, A,\ep,c,\al^{*})2^{\frac{n}{\beta-1}}\right\}.
\eeqnn
By Proposition \ref{pro:ancestor}, we see that for $n$ large enough,
\beqnn
\mathbb{P}_{\nu}\(N\(t-2^{-n},t\)\geq e^{-24\times 2^{-n\al^{*}}}v(2^{-n})\)\geq 1-O\(2^{-n(1-2\al^{*})}\).
\eeqnn
% Therefore,
This fact, combining with Proposition \ref{pro:v}, yields
\beqnn
\mathbb{P}_{\nu}\(E_{n,t}\)\geq 1-O\(2^{-n(1-2\al^{*})}\)
\eeqnn
for $n$ large enough. Therefore,
\beqnn
\mathbb{P}_{\nu}\({F_{n,t}^{c}}\)\ar=\ar\mathbb{P}_{\nu}\(H^{t}\({t-2^{-n},t}\)\leq ch\(2^{-n}\)\)\cr
\ar\leq\ar\mathbb{P}_{\nu}\(H^{t}\({t-2^{-n},t}\)\leq ch\(2^{-n}\),E_{n,t}\)+\mathbb{P}_{\nu}\({E_{n,t}^{c}}\)\cr
\ar\leq\ar\bigg(\mathbb{P}_{\nu}\(|B(1)|\leq c\sqrt{n\log2}\)\bigg)^{\big\lfloor { C(\beta, A,\ep,c,\al^{*})}2^{\frac{n}{\beta-1}}\big\rfloor}+O\(2^{-n(1-2\al^{*})}\)\cr
\ar\leq\ar\bigg(1-\mathbbm{c}_1(d)c^{d-2}(n\log2)^{\frac{d}{2}-1}2^{-\frac{c^2n}{2}}\bigg)
^{\big\lfloor  C(\beta, A,\ep,c,\al^{*})2^{\frac{n}{\beta-1}}\big\rfloor}+O\(2^{-n(1-2\al^{*})}\),
\eeqnn
where the last line follows from (\ref{eq:bm}). Therefore,
{\small\beqlb\label{eq:F}
\ar\ar\lim_{n\rightarrow\infty}\mathbb{P}_{\nu}\({F_{n,t}^{c}}\)\cr
\ar\ar\leq\lim_{n\rightarrow\infty}
\Big[\big(1-\mathbbm{c}_1(d)c^{d-2}(n\log2)^{\frac{d}{2}-1}2^{-\frac{c^2n}{2}}\big)
^{C(\beta,A,\ep,c,\al^{*})2^{\frac{n}{\beta-1}}}+O\big(2^{-n(1-2\al^{*})}\big)\Big].
\eeqlb}
In case of $c<\sqrt{{2}/{(\beta-1)}}$, the limit of each term in the above expression is equal to $0$. Then the result follows.
\end{proof}

\subsubsection{Upper bounds for the local right modulus of continuity}
By considering the grid points in a right neighborhood of a fixed time, one can obtain the local right modulus of continuity result.
\begin{proposition}\label{le:localupp2}
Under Condition \ref{con2},
if $c>\sqrt{2/(\be-1)}$, then for each fixed $s\geq0$ and any $\nu\in M_1(\mathbb{R}^d)$, $\mathbb{P}_{\nu}$-a.s. there exists a positive random variable $\delta$ such that for all $t$ with $0< t-s\leq \delta$,
\beqnn
H^t\({s,t}\)\leq ch(t-s).
\eeqnn
Moreover, there are positive constants $\tilde{c}_7$, $\tilde{c}_8$ and $\tilde{c}_9$ such that
\beqlb\label{eq:delta2}
\mathbb{P}_{\nu}\(\delta\leq\rho\)\leq \tilde{c}_7\rho^{\tilde{c}_8}\text{\,\,\,\,\,\,\,\,if\,\,}0\leq\rho\leq\tilde{c}_9.
\eeqlb
\end{proposition}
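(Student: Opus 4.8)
The plan is to run the argument of Proposition~\ref{le:localupp} for the local left modulus almost verbatim, but to grid a \emph{right}-neighbourhood of the fixed time $s$ and, crucially, to accommodate the fact that the recovery time $t$ of the ancestry process now varies with the increment $t-s$. I first fix the constants exactly as in the local left case: for $c>\sqrt{2/(\be-1)}$ pick $\bar{c}\in(\sqrt{2/(\be-1)},c)$, then $\ep,\theta\in(0,1)$ small with $\bar{c}(1+\ep)(1-\theta)^{1/2}(1-\theta^{1/2})^{-1}<c$, and finally $M$ large with $\theta M>2$ so that~(\ref{eq:localcon3}) holds. Since $\bar{c}^2/2-1/(\be-1)>0$, the exponent driving the sums below is positive, which is exactly why the milder local threshold $\sqrt{2/(\be-1)}$ (rather than the global $\sqrt{2\be/(\be-1)}$) suffices.

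Because $s$ is fixed and will be the common \emph{left} endpoint, the natural grid is formed by the candidate recovery points $s+(j/M)\theta^k$. I set
$$\widetilde{D}_{n,s}:=\bigcup_{k=n}^{\infty}\bigcup_{j=1}^{M}B_k\Big(s+\tfrac{j}{M}\theta^k,\tfrac{j}{M},\theta,\bar{c}\Big),$$
$\bar{J}:=\min\{n:\widetilde{D}_{n,s}\text{ does not occur}\}\vee\bar{n}_0$ with $\bar{n}_0$ as in~(\ref{eq:n_0}), and $\delta:=\theta^{*}\wedge\theta^{\bar{J}}$. From $\mathbb{P}_\nu(\bar{J}\ge n)\le\mathbb{P}_\nu(\widetilde{D}_{n-1,s})$ and Lemma~\ref{le:mod}~\ref{le:moda} applied to each of the $M$ events at level $k$, I obtain a bound of exactly the same form as~(\ref{eq:barJ}), with a single factor $M$ in place of $M^2$ (one index $j$ instead of the double index $(i,j)$) and, as already in the local left case, no factor $T\theta^{-k}$. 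Combining this with~(\ref{eq:theta}) for $\theta^{*}$ delivers the tail estimate~(\ref{eq:delta2}).

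For the geometric step, take $t$ with $0<t-s\le\delta$ and choose $n$ with $\theta^{n+1}<t-s\le\theta^{n}$; as $\delta\le\theta^{\bar{J}}$ we have $n\ge\bar{J}$, so $\widetilde{D}_{n,s}$ does not occur. Put $j:=\lfloor M(t-s)/\theta^n\rfloor\in\{1,\dots,M\}$ and $t_0:=s+(j/M)\theta^n\le t$; then $t_0-s=(j/M)\theta^n$ and, since $\theta M>2$, $0\le t-t_0<\theta^n/M<\theta^{n+1}<t-s\le\theta^{*}$. The lookdown representation gives the decomposition
$$H^t(s,t)\le H^{t_0}(s,t_0)+H^t(t_0,t).$$
The residual $H^t(t_0,t)$ is a dislocation at the \emph{genuine} recovery time $t$ over a tiny interval, so Lemma~\ref{le:mod1} and the computation in~(\ref{eq:mod2})--(\ref{eq:localmod2}) yield $H^t(t_0,t)\le C^{*}h(t-t_0)\le C^{*}h(\theta^n/M)\le C^{*}\sqrt{\log M/(\theta M\log(1/\theta))}\,h(t-s)$, using $\theta M>1$ and the monotonicity of $h$ on $(0,e^{-1}]$. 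For the main term, $s=t_0-(j/M)\theta^n$, so the non-occurrence of $B_n(t_0,j/M,\theta,\bar{c})$ lets Lemma~\ref{le:mod}~\ref{le:modb} give $H^{t_0}(s,t_0)\le\bar{c}(1+\ep)(1-\theta)^{1/2}(1-\theta^{1/2})^{-1}h(t_0-s)\le\bar{c}(1+\ep)(1-\theta)^{1/2}(1-\theta^{1/2})^{-1}h(t-s)$. Summing the two bounds and invoking~(\ref{eq:localcon3}) gives $H^t(s,t)<c\,h(t-s)$, as required.

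The one genuine novelty compared with the local left argument --- and the step I expect to need the most care --- is that the recovery time $t$ is no longer fixed, so the ancestral partition $\Pi^t$ changes with $t$. This is resolved by approximating the \emph{recovery} time by the grid point $t_0$ (rather than approximating the left endpoint, as in Proposition~\ref{le:localupp}) and controlling the leftover piece $H^t(t_0,t)$ at the true recovery time through Lemma~\ref{le:mod1}. The decisive feature making this uniform is that neither the random threshold $\theta^{*}$ nor the constant $C^{*}$ of Lemma~\ref{le:mod1} depends on the recovery time: they bound $H^{r'}(r,r')$ simultaneously for all $0\le r<r'\le T$ with $r'-r\le\theta^{*}$, so the residual is dominated uniformly over the entire family of recovery times $t\in(s,s+\delta]$.
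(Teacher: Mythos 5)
Your proposal is correct and follows essentially the same route as the paper's proof: grid the right neighbourhood of $s$, approximate the recovery time $t$ from below by the grid point $t_0=s+(j/M)\theta^n$, decompose $H^t(s,t)\le H^{t_0}(s,t_0)+H^{t}(t_0,t)$, control the grid increment via Lemma \ref{le:mod} \ref{le:modb} and the residual at the true recovery time via Lemma \ref{le:mod1}, and obtain the tail bound for $\delta=\theta^{*}\wedge\theta^{\bar J}$ from Lemma \ref{le:mod} \ref{le:moda} together with (\ref{eq:theta}). The only (harmless) difference is that your family of bad events keeps just the diagonal events $B_k\big(s+\tfrac{j}{M}\theta^k,\tfrac{j}{M},\theta,\bar c\big)$, whereas the paper's $\tilde{D}_{n,s}$ includes the superfluous off-diagonal ones $j<i$; your smaller union is exactly what the geometric step uses, so the argument goes through with the same constants up to the factor $M$ versus $M^2$.
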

\begin{proof}
The proof is in analogous to that of Proposition \ref{le:localupp}. Now we carry out the division in a right neighborhood of a fixed time.
For any $c>\sqrt{2/(\be-1)}$, let $\bar{c}\in(\sqrt{2/(\be-1)},c)$ and choose $\epsilon,\,\theta\in(0,1)$ small enough so that
\beqnn
\bar{c}(1+\epsilon)(1-\theta)^{\frac12}(1-\theta^{\frac12})^{-1}<c.
\eeqnn
Also choose $M$ large enough such that $\theta M>2$ and (\ref{eq:localcon3}) holds.
For each fixed $s\geq0$,  define
%\begin{small}
\beqnn
\ar\ar\tilde{D}_{n,s}:=\bigcup_{k=n}^{\infty}\bigcup_{i=1}^{M}\bigcup_{j=1}^{i}
B_k\Big(s+\frac{i}{M}\theta^k,\frac{j}{M},\theta,\bar{c}\Big).%\cr
%\ar\ar\quad=\bigcup_{k=n}^{\infty}{\bigcup_{i=1}^{M}}\bigcup_{j=1}^{i}\bigcup_{\ell=k+1}^{\infty}
%\left\{H^{s+\frac{i}{M}\theta^k}\(s+\frac{i}{M}\theta^k-\frac{j}{M}\theta^{\ell-1},s+\frac{i}{M}\theta^k-\frac{j}{M}\theta^{\ell}\)
%>\bar{c}h\(\frac{j}{M}\theta^{\ell-1}\(1-\theta\)\)\right\}.
\eeqnn
%\end{small}
Set
\beqnn
\tilde{J}\equiv\tilde{J}\(\epsilon,M,\theta,s\):=\min\left\{n: \tilde{D}_{n,s}\,\text{does not occur}\right\}\vee \bar{n}_0,
\eeqnn
where $\bar{n}_0$ is defined by (\ref{eq:n_0}). By the same estimation as (\ref{eq:barJ}), we obtain
\beqnn
\ar\ar\mathbb{P}_{\nu}\(\tilde{J}\geq n\)\,\leq\,\mathbb{P}_{\nu}\(\tilde{D}_{n-1,s}\)\leq\hat{C}\(\bar{c},C,d,\be,M,\theta\)\Big[\theta^{\(\frac{\bar{c}^2}{2}-\frac{1}{{\be-1}}\)n}n^{\frac{2}{{\be-1}}+\frac{d}{2}-1}\vee\big( e^{-\frac{n}{\theta}}\big)\Big].
\eeqnn
Choose $\delta:=\theta^{*}\wedge \theta^{\tilde{J}}$. Similarly as (\ref{eq:delta}),  (\ref{eq:delta2}) is attained. Further, for any $t$ with $0 < t-s\leq \delta$,
there exists an $n$ such that
%\beqnn
$\theta^{n+1}< t-s\leq \theta^{n}\text{\,\,\,\,and\,\,\,\,} \tilde{J}\geq n.$
%\eeqnn
Then $t$ is approximated by grid points  by choosing
$i\in\{1,\ldots,M\}$  such that
\beqnn
s+\(i/M\)\theta^n=t_0\leq t< s+\((i+1)/M\)\theta^n.
\eeqnn
Similar to (\ref{eq:localmod2}) and (\ref{eq:localt}), we have
\beqnn
H^{t}\({t_0,t}\)
\ar\leq\ar C^{*}\sqrt{\frac{\log(M)}{\theta M\log(1/\theta)}}h\(t-s\)
\eeqnn
and
\beqnn
H^{t_0}\({s,t_0}\)\ar\leq\ar\bar{c}(1+\epsilon)(1-\theta)^{\frac12}(1-\theta^{\frac12})^{-1}h\(t-s\).
\eeqnn
Then the result follows from $H^t\({s,t}\)\leq H^{t_0}\({s,t_0}\)+H^{t}\({t_0,t}\)$. %Then the proof is complete.
\end{proof}

Combining with the result in Proposition \ref{le:localupp2},
we can easily obtain the upper bound of local right modulus of continuity for the ancestry process as follows.
\begin{lemma}\label{th:localright1}
Assume that Condition \ref{con2} holds. Then for each fixed $s\geq 0$  and any $\nu\in M_1(\mathbb{R}^d)$, we have $\mathbb{P}_\nu$-a.s.
	\begin{equation*}\label{local_lower_left}
		\limsup_{\ep\to 0+}\frac{H^{s+\ep}\({s, s+\ep}\)}{h(\ep)}\leq\sqrt{\frac{2}{\beta-1}}.
	\end{equation*}	
\end{lemma}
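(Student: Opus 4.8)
The plan is to read the $\limsup$ bound off directly from Proposition \ref{le:localupp2}, upgrading the strict inequality $c>\sqrt{2/(\be-1)}$ to the sharp constant $\sqrt{2/(\be-1)}$ by a countable-intersection argument. This mirrors exactly the way Lemma \ref{th:global1} is deduced from Proposition \ref{le:upp} and Lemma \ref{th:localleft1} from Proposition \ref{le:localupp}, so the argument is purely a limiting one and contains no new estimation.

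First I would fix an arbitrary $c>\sqrt{2/(\be-1)}$ and apply Proposition \ref{le:localupp2} with the given $s\geq 0$ and $\nu$. On a set of full $\mathbb{P}_\nu$-measure this yields a strictly positive random variable $\delta$ with $H^t(s,t)\leq ch(t-s)$ whenever $0<t-s\leq\delta$. Setting $t=s+\ep$, so that $t-s=\ep$, this reads $H^{s+\ep}(s,s+\ep)\leq ch(\ep)$ for every $0<\ep\leq\delta$; dividing by $h(\ep)>0$ and letting $\ep\to 0+$ gives
\beqnn
\limsup_{\ep\to 0+}\frac{H^{s+\ep}(s,s+\ep)}{h(\ep)}\leq c\qquad\mathbb{P}_\nu\text{-a.s.}
\eeqnn

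To eliminate the slack in $c$, I would take a decreasing sequence $c_k\downarrow\sqrt{2/(\be-1)}$ with $c_k>\sqrt{2/(\be-1)}$ for each $k$, say $c_k=\sqrt{2/(\be-1)}+1/k$. The previous step produces, for each $k$, a $\mathbb{P}_\nu$-full event $\Omega_k$ on which the $\limsup$ is at most $c_k$. On the intersection $\bigcap_{k\geq 1}\Omega_k$, which again has probability one, the $\limsup$ is at most $c_k$ for every $k$ and hence at most $\inf_k c_k=\sqrt{2/(\be-1)}$, which is exactly the assertion.

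There is no real analytic obstacle here, as all of the quantitative content is already packaged in Proposition \ref{le:localupp2}. The only point requiring care is that the random $\delta$ it supplies depends on $c$, so the bound $\leq c$ holds only on a $c$-dependent almost-sure event; this is precisely why the passage to the infimum must be carried out along the countable sequence $c_k$ rather than over all $c$ simultaneously, so that the intersection of the exceptional null sets remains null and the almost-sure statement survives.
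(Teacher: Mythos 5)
Your proposal is correct and matches the paper's (implicit) argument: the paper likewise deduces this lemma directly from Proposition \ref{le:localupp2}, with the passage from an arbitrary $c>\sqrt{2/(\be-1)}$ to the sharp constant being exactly the countable-intersection limiting step you spell out. Your care about the $c$-dependence of the random variable $\delta$ and the resulting null sets is the right (and only) point of attention.
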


\subsubsection{Lower bound for the local right modulus of continuity}
\begin{lemma}\label{th:localright2}%\label{th:localright}
Assume that either $\La(\{0\})>0$ or both $\La(\{0\})=0$ and  Condition {\ref{eq:laup}} hold for $1<\beta<2$.
		Then for each fixed $s\geq0$ and any $\nu\in M_1(\mathbb{R}^d)$, we have $\mathbb{P}_\nu$-a.s.
	\begin{equation*}\label{local_lower_left}
		\limsup_{\ep\to 0+}\frac{H^{s+\ep}\({s, s+\ep}\)}{h(\ep)}\geq\sqrt{\frac{2}{\beta-1}}.
	\end{equation*}	
\end{lemma}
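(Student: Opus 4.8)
The plan is to mirror the proof of Lemma~\ref{th:localleft2}, carrying out the dyadic partition on a right neighborhood of the fixed time $s$ instead of a left neighborhood of a fixed time. It suffices to prove that for every $c<\sqrt{2/(\be-1)}$ one has $\mathbb{P}_\nu$-a.s. $\limsup_{\ep\to0+}H^{s+\ep}(s,s+\ep)/h(\ep)\ge c$, since letting $c\uparrow\sqrt{2/(\be-1)}$ then yields the assertion. I would work at the dyadic scales $2^{-n}$ and introduce the event $F_{n,s}:=\{H^{s+2^{-n}}(s,s+2^{-n})>ch(2^{-n})\}$, together with the block-count event $E_{n,s}:=\{N(s,s+2^{-n})\ge C(\be,A,\ep,c,\al^{*})2^{n/(\be-1)}\}$, where $\al^{*}\in(0,1/2)$ and $C(\be,A,\ep,c,\al^{*})$ is the constant from~(\ref{eq:C}).

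The first step is to bound the number of ancestors from below. Since $(\Pi^{s+2^{-n}}(r))_{0\le r\le s+2^{-n}}$ has the law of the $\La$-coalescent run up to time $s+2^{-n}$, the count $N(s,s+2^{-n})=\#\Pi^{s+2^{-n}}(2^{-n})$ has the same distribution as the number of blocks of a $\La$-coalescent at time $2^{-n}$. Thus Proposition~\ref{pro:ancestor} applies verbatim, and combined with Proposition~\ref{pro:v} --- via~(\ref{eq:v1}) when $\be\in(1,2)$ and via~(\ref{eq:v2}) when $\La(\{0\})>0$, so that $\be=2$ --- and the choice of constant in~(\ref{eq:C}), it gives $\mathbb{P}_\nu(E_{n,s})\ge 1-O(2^{-n(1-2\al^{*})})$ for all large $n$.

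The second step is the Brownian estimate. On $E_{n,s}$ there are at least $\lfloor C(\be,A,\ep,c,\al^{*})2^{n/(\be-1)}\rfloor$ distinct ancestors at time $s$ of the population alive at $s+2^{-n}$, and since $H^{s+2^{-n}}(s,s+2^{-n})\le ch(2^{-n})$ forces every one of their forward displacements over $[s,s+2^{-n}]$ to be at most $ch(2^{-n})$, I would bound $\mathbb{P}_\nu(F_{n,s}^{c})\le\mathbb{P}_\nu(F_{n,s}^{c}\cap E_{n,s})+\mathbb{P}_\nu(E_{n,s}^{c})$ and use that, by the lookdown construction, these displacements are independent, each distributed as a Brownian increment over a time interval of length $2^{-n}$. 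Writing $h(2^{-n})=\sqrt{2^{-n}\,n\log 2}$, Brownian scaling together with the tail bound~(\ref{eq:bm}) then yields
\beqnn
\mathbb{P}_\nu(F_{n,s}^{c})\ar\le\ar\bigl(1-\mathbbm{c}_1(d)c^{d-2}(n\log 2)^{\frac{d}{2}-1}2^{-\frac{c^2n}{2}}\bigr)^{\lfloor C(\be,A,\ep,c,\al^{*})2^{n/(\be-1)}\rfloor}+O(2^{-n(1-2\al^{*})}),
\eeqnn
exactly as in the corresponding display in the proof of Lemma~\ref{th:localleft2}.

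Finally, $c<\sqrt{2/(\be-1)}$ gives $c^2/2<1/(\be-1)$, so the exponent $n\bigl(1/(\be-1)-c^2/2\bigr)$ is positive and the first term decays like $\exp\bigl(-\mathrm{const}\cdot n^{d/2-1}2^{n(1/(\be-1)-c^2/2)}\bigr)$, while the second is $O(2^{-n(1-2\al^{*})})$ with $1-2\al^{*}>0$; both are summable in $n$. Borel--Cantelli then shows that $\mathbb{P}_\nu$-a.s. $F_{n,s}$ occurs for all large $n$, hence $H^{s+2^{-n}}(s,s+2^{-n})>ch(2^{-n})$ eventually and $\limsup_{\ep\to0+}H^{s+\ep}(s,s+\ep)/h(\ep)\ge c$. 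I expect the only genuinely new point --- and thus the main, if minor, obstacle --- to be checking that recovering the ancestry at the moving time $s+2^{-n}$ preserves both the distributional identity for $N(s,s+2^{-n})$ and the independence of the ancestral Brownian increments on $[s,s+2^{-n}]$; both follow from the stated law of $(\Pi^{t}(\cdot))$ and the level-wise independence of the spatial motions in the lookdown representation, after which the argument is identical to the local left case.
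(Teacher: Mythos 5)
Your proposal is correct and follows essentially the same route as the paper: the same events $\tilde F_{n,s}$ and $\tilde E_{n,s}$, the same use of Propositions \ref{pro:ancestor} and \ref{pro:v} with the constant from (\ref{eq:C}), and the same Brownian tail bound (\ref{eq:bm}) applied to the independent ancestral displacements, exactly as in the paper's reduction to the estimate (\ref{eq:F}) from Lemma \ref{th:localleft2}. The only (harmless) difference is cosmetic: you invoke summability and Borel--Cantelli to get $F_{n,s}$ eventually, whereas the paper only establishes $\lim_{n\to\infty}\mathbb{P}_{\nu}(\tilde F_{n,s}^{c})=0$ and concludes via $\mathbb{P}_{\nu}(\limsup_n \tilde F_{n,s})\geq\limsup_n\mathbb{P}_{\nu}(\tilde F_{n,s})=1$, which suffices since the $\limsup$ in the statement only needs $\tilde F_{n,s}$ infinitely often.
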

\begin{proof}
The proof is analogous to that of Lemma \ref{th:localleft2}. We only briefly list the main steps.
For each fixed $s\geq0$, set
\beqnn
\tilde{F}_{n,s}=\left\{ H^{s+2^{-n}}\(s,s+2^{-n}\)>ch\(2^{-n}\)\right\}.
\eeqnn
Write
\beqnn
\tilde{E}_{n,s}\ar=\ar\left\{ N\(s,s+2^{-n}\)\geq C(\beta,A,\ep,c,\al^{*})2^{\frac{n}{\beta-1}}\right\},
\eeqnn
where $ C(\beta,A,\ep,c,\al^{*})$ is given by (\ref{eq:C}).
Then
\beqnn
\mathbb{P}_{\nu}\({\tilde{F}_{n,s}^{c}}\)\ar=\ar\mathbb{P}_{\nu}\(H^{s+2^{-n}}\({s,s+2^{-n}}\)\leq ch\(2^{-n}\)\)\cr
\ar\leq\ar\mathbb{P}_{\nu}\(H^{s+2^{-n}}\({s,s+2^{-n}}\)\leq ch\(2^{-n}\),\tilde{E}_{n,s}\)+\mathbb{P}_{\nu}\({\tilde{E}_{n,s}^{c}}\).
\eeqnn
The limits of these two terms in the above line can be estimated in the same way as (\ref{eq:F})  so as to derive
$\lim_{n\rightarrow\infty}\mathbb{P}_{\nu}\({\tilde{F}_{n,s}^{c}}\)=0$. The proof is complete.
\end{proof}

\subsubsection{Proofs of Theorems \ref{co:moduglobal} and \ref{co:modu}}
\begin{proof}[Proof of Theorems \ref{co:moduglobal} and \ref{co:modu}]
Combined with Remark \ref{con:compare}, Theorem \ref{co:moduglobal}   follows from Lemmas \ref{th:global1}-\ref{th:global2} and Theorem \ref{co:modu} follows from Lemmas \ref{th:localleft1}-\ref{th:localleft2} and \ref{th:localright1}-\ref{th:localright2}.
\end{proof}

\subsection{Modulus of continuities for the $\La$-Fleming-Viot support process}
For any $t\geq 0$, recall that $X(t)=\lim_{n\rightarrow\infty}\frac{1}{n}\sum_{i=1}^{n}\delta_{X_i(t)}$ with $\{X_1(t),X_2(t),\ldots\}$ being the collection of spatial locations of particles in the lookdown representation
at time $t$.
It follows from \cite[Lemma 3.1]{LZ2} that for each fixed $t>0$ and any $\nu\in M_1(\mathbb{R}^d)$,
\beqnn
\mathbb{P}_{\nu}\(\left\{X_1(t),X_2(t),\ldots\right\}\subseteq S\(X(t)\)\)=1.
\eeqnn
Adapting a proof for the corresponding result of super-Brownian motion in \cite[ Lemma III.1.6]{perkins}, the above result uniformly   holds for all $t$.
\begin{lemma}\label{le:ut}
Given $T>0$ and any $\nu\in M_1(\mathbb{R}^d)$, we have $\mathbb{P}_{\nu}$-a.s. $\{X_i(t), i=1,2,\ldots\}\subseteq S(X(t))$ for all $0\leq t\leq T$.
\end{lemma}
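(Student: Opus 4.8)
The plan is to take the fixed-time inclusion $\{X_i(t)\}\subseteq S(X(t))$ (which holds $\mathbb{P}_\nu$-a.s.\ for each \emph{deterministic} $t$ by \cite[Lemma 3.1]{LZ2}) and promote it to a statement valid simultaneously for all $t\in[0,T]$. The obvious route is to apply the fixed-time result at each rational $q$, take a countable union to get $X_i(q)\in S(X(q))$ for all rational $q$, and then pass to a general $t$ along a sequence $q\downarrow t$, using right-continuity of the particle paths $X_i(\cdot)$ and of $X(\cdot)$. I expect this to be precisely where the difficulty lies, and I would \emph{not} try to make it work: weak convergence $X(q)\to X(t)$ does not transport membership in the support through the limit, because the $X(q)$-mass of a small ball around $X_i(q)$ can vanish as $q\downarrow t$ (Portmanteau only gives lower semicontinuity of the mass of open sets, which is the wrong direction). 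So the main obstacle is the passage from almost-every $t$ to every $t$, and the key idea is to avoid any limit in the measure by producing, \emph{at time $t$ itself}, a positive lower bound on the $X(t)$-mass of every ball centred at $X_i(t)$.

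The device for this is the backward ancestry combined with the uniform modulus of continuity already in hand. I would work on the $\mathbb{P}_\nu$-full event on which Proposition \ref{le:upp} holds, i.e.\ there is a random $\delta>0$ with $H^t(t-r,t)\le c\,h(r)$ for all $t\in[r,T]$ and all $0<r\le\delta$, where $c>\sqrt{2\be/(\be-1)}$ (cf.\ Lemma \ref{th:global1}). Fix $t\in(0,T]$, a level $i$, and $\ep>0$, and pick $r\in(0,\delta]$ so small that $2c\,h(r)<\ep$. Consider the ancestors of the time-$t$ population at the earlier time $t-r$. By the definition \eqref{eq:max}, each block $\pi^{t-r,t}_{\ell}$ consists of levels whose time-$t$ positions all lie within $H^t(t-r,t)\le c\,h(r)$ of the single ancestral location $X_{\ell}((t-r)-)$. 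Hence, writing $\ell(i)$ for the block containing $i$, every $j\in\pi^{t-r,t}_{\ell(i)}$ satisfies $|X_j(t-)-X_i(t-)|\le 2H^t(t-r,t)\le 2c\,h(r)<\ep$, so the \emph{entire} family of level $i$ is confined to $\mathbb{B}(X_i(t-),\ep)$.

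It then remains to give this family positive mass. Since the associated $\La$-coalescent comes down from infinity under Condition \ref{con2}, for every $t$ and every $r>0$ the number of ancestors $N(t-r,t)=\#\Pi^t(r)$ is finite and the coalescent is dust-free, so the finitely many blocks $\pi^{t-r,t}_{\ell}$ have strictly positive asymptotic frequencies summing to one; in particular the family of $i$ has frequency $p=p(t,r)>0$. Because all of its members sit in $\mathbb{B}(X_i(t-),\ep)$, the empirical measures obey $\tfrac1n\#\{j\le n:X_j(t-)\in\mathbb{B}(X_i(t-),\ep)\}\ge \tfrac1n\#(\pi^{t-r,t}_{\ell(i)}\cap[n])\to p$, and since $\mathbb{B}(X_i(t-),\ep)$ is closed, Portmanteau gives $X(t-)\big(\mathbb{B}(X_i(t-),\ep)\big)\ge p>0$. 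As this holds for every $\ep>0$, we get $X_i(t-)\in S(X(t-))$; and since the supporting event (the uniform modulus together with coming down from infinity with proper frequencies) does not depend on $t$ or $i$, the conclusion is simultaneous over all $0\le t\le T$ and all $i$.

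Finally I would reconcile the left-limit objects $X_i(t-)$ and $S(X(t-))$, which the bookkeeping in \eqref{eq:max} naturally produces, with the desired $X_i(t)$ and $S(X(t))$. Away from the countably many (random) times of the driving Poisson reproduction events one has $X_i(t)=X_i(t-)$ and $X(t)=X(t-)$, so nothing is needed; at a reproduction time the update merely copies pre-existing time-$(t-)$ locations and reshuffles mass among them, so each macroscopic atom keeps positive mass and $X_i(t)$ is one such location, leaving the inclusion intact. I expect the genuine content to sit in the middle steps—getting the \emph{whole} ancestral family, not just the tagged particle, into a small ball via the modulus, and invoking the positive-frequency (dust-free) property of the coalescent uniformly—rather than in this last reconciliation.
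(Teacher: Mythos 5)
Your opening diagnosis is exactly right: the whole difficulty is passing from ``$\mathbb{P}_\nu$-a.s.\ for each fixed $t$'' to ``$\mathbb{P}_\nu$-a.s.\ for all $t\in[0,T]$ simultaneously,'' and Portmanteau along $q\downarrow t$ goes the wrong way. The problem is that your replacement argument quietly reintroduces that very difficulty in two places. Your proof needs, \emph{simultaneously for all} $t\in(0,T]$: (a) that every ancestral block $\pi^{t-r,t}_{\ell}$ has a well-defined, strictly positive asymptotic frequency; and (b) that the empirical measures $\tfrac1n\sum_{j\le n}\delta_{X_j(t-)}$ converge weakly to $X(t-)$, so that Portmanteau can be applied at time $t$ itself. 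Both of these are fixed-time almost-sure statements: (a) follows for deterministic $t$ from exchangeability of $\big(X_j(t)\big)_j$ and Kingman's paintbox (finitely many blocks forces no dust, hence positive frequencies), and (b) is the fixed-time de Finetti statement from the lookdown construction. But for each $t$ the objects $\Pi^t(\cdot)$ and the de Finetti limit are \emph{different} random elements, so the assertion that ``the supporting event does not depend on $t$ or $i$'' is not correct; what you actually need is an uncountable intersection of probability-one events, and nothing in the paper or in the cited references (\cite{LZ2}, Donnelly--Kurtz) supplies it --- the path-space convergence of $X^{(n)}$ to $X$ only gives convergence of empirical measures at continuity times of $X$, and for $\Lambda(\{0\})=0$ the jump times of $X$ are dense. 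Worse, exchangeability arguments cannot be invoked at random exceptional times: for instance, at the (random) time at which some ancestral family's frequency process hits zero, the partition of levels is conditioned on an event that destroys exchangeability, so the paintbox argument simply does not apply there. Ruling out that a particle still occupies such a dying family at the exact extinction time requires its own monotonicity/lineage argument; and note that your intermediate claim (a uniform positive lower bound $X(t)\big(\mathbb{B}(X_i(t),\epsilon)\big)\ge p>0$ for all $t,i,\epsilon$) is strictly \emph{stronger} than the lemma you are trying to prove, so assuming it is close to circular.

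For comparison, the paper avoids both issues by never evaluating the de Finetti measure or block frequencies at an arbitrary time: it applies the fixed-time inclusion only at rational times, introduces for each rational $r$ and rational ball $B$ the stopping time $T_r(B)=\inf\{s\ge r: X(s)(B)=0\}$, and uses the strong Markov property together with the modulus of continuity of the ancestry process to show that once the mass of $B$ vanishes, the mass of the concentric half-radius ball $B'$ stays zero for a short random time afterwards. A particle $X_i(s)$ lying outside $S(X(s))$ then produces, via right-continuity of the particle path and rational approximation $r_n\uparrow s$, rational times $r_n'\to s$ with $X_i(r_n')\in B'$ but $X(r_n')(B')=0$, contradicting the rational-time inclusion. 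This is Perkins' scheme \cite[Lemma III.1.6]{perkins}, and its point is precisely that the only ``for all $t$'' inputs are the modulus of continuity (Proposition \ref{le:upp}, which the paper has established uniformly) and path regularity --- not any uniform-in-$t$ statement about frequencies or empirical limits. If you want to salvage your approach, the honest task is to prove (a) and (b) uniformly in $t$, which is a genuine piece of work of the same nature as the lemma itself.
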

\begin{proof}
Write $\mathbb{Q}$ for the set of positive rational numbers. From the lookdown representation, $\mathbb{P}_{\nu}$-a.s. for all $r\in \mathbb{Q}$, \beqlb\label{eq:sup}
\{X_i(r), i=1,2,\ldots\}\subseteq S(X(r)).
\eeqlb	
Observe that there are countably many Brownian motions involved in the lookdown representation. We can thus choose a version so that they are all continuous satisfying (\ref{eq:sup}).

For any rational open ball $B\equiv B(x,\ep)$ with rational $x$ and $\ep$, write $B'\equiv B(x,\ep/2)$. For all $t\geq 0$, let $T_t(B):=\inf\{s\geq t: X(s)(B)=0\}$. Then by the strong Markov property for $X$ and by the modulus of  continuity for the ancestry process, with probability one for any $r$ and rational ball $B$, there exists a random variable $\eta>0$ such that
	\begin{equation}\label{rational}
	X(s)(B')=0\quad\text{ for all}\quad s\in [T_r(B), T_r(B)+\eta)\quad\text{ for}\quad T_r(B)<\infty.
	\end{equation}
	
We claim that on the event that all of the above hold, for any $0< s\leq T$ and $i$, $X_i(s)\in S(X(s))$. Otherwise, there exist $s, i$  such that $X_i(s)\not\in S(X(s))$. Then one can select rational balls $B$ and $B'$ satisfying $X_i(s)\in B'$ and $S(X(s))\cap B=\emptyset$.
Further choose a sequence of positive rationals $(r_n)$ with  $ r_n\uparrow s$. Then for all $n$, $T_{r_n}(B)\leq s$ and $X(t)(B')=0$ for all $t\in [T_{r_n}(B), T_{r_n}(B)+\eta_n)$ for some $\eta_n>0$, which combined with the sample path continuity of Brownian motion $X_i$ implies that one can find a sequence of rationals $(r_n')$ such that  $r_n'\to s$, $X_i(r_n')\in B'$ and $X({r_n'})(B')=0$  for all $n$.
But this is impossible since by (\ref{eq:sup}), $X_i(r_n')\in S(X(r_n'))$
that implies  $X({r_n'})(B')>0$. The proof is complete.
\end{proof}

\begin{proof}[Proof of Corollary  \ref{co:suppor} ]
Combined with Lemma \ref{le:ut}, the desired modulus of continuity results for the support process in \ref{co:global}-\ref{co:localright} follow from Propositions \ref{le:upp}, \ref{le:localupp} and \ref{le:localupp2}.
Since the process $X$ is right continuous in $M_1(\mathbb{R}^d)$, applying \cite[Lemma 5.5]{LZ2}, we have $\mathbb{P}_\nu$-a.s.\,for any $t>0$,
$\rho_1(S(X(t)),S(X(t+\Delta t)))\to 0\,\, \text{as} \,\,\Delta t\to 0+.$
In addition, by \ref{co:global} we have $\mathbb{P}_\nu$-a.s.\,
$\rho_1(S(X(t+\Delta t)), S(X(t)))\to 0\,\, \text{as} \,\,\Delta t\to 0+.$
The desired result in \ref{co:righcon} then follows.
\end{proof}

\begin{proof}[{Proof of Corollary \ref{co:rho}}]
By Lemma \ref{th:localright2}, 	
one can show that given $c<\sqrt{{2}/(\beta-1)}$, \,  $\mathbb{P}_{\delta_0}$-a.s. for all small enough $t>0$,
$S(X(t))\not\subseteq \mathbb{B}(0, ch(t))$, which combined with Corollary \ref{co:suppor} \ref{co:localright} implies the desired result. The proof is complete.
\end{proof}

%%%%%%%%%%%%%%%%%%%%%%%%%%%%%%%%%%%%%%%%%%%%%%%%%%%%%%%%%%%%%%%%%%%%%%%%%%%%%%%%%%%%%%%%%%%%%%%%%%%%%%%%%%%%%%%%%%%%%%%%%%%%
%\section*{acknowledgements}
%This research is supported by NSFC (No. 11501164), NSF of Hebei Province (No.~A2019205299), Hebei Education Department (No. QN2019073),  HNU fund %(No. L2019Z01)...

%%%%%%%%%%%%%%%%%%%%%%%%%%%%%%%%%%%%%%%%%%%%%%%%%%%%%%%%%%%%%%%%%%%%%%%%%%%%%%%%%%%%%%%%%%%%%%%%%%%%%%%%%%%%%%%%%%%%%%%%%%%%
%\begin{comment}

%\end{comment}

%\bibliographystyle{plain}
%\bibliography{mybibfile}

\begin{thebibliography}{10}

\bibitem{BBL}
Julien Berestycki, Nathana\"{e}l Berestycki, and Vlada Limic.
\newblock The {$\Lambda$}-coalescent speed of coming down from infinity.
\newblock {\em Ann. Probab.}, 38(1):207--233, 2010.

\bibitem{Bertoin}
Jean Bertoin.
\newblock {\em Random fragmentation and coagulation processes}, volume 102 of
  {\em Cambridge Studies in Advanced Mathematics}.
\newblock Cambridge University Press, Cambridge, 2006.

\bibitem{BerLeGa03}
Jean Bertoin and Jean-Fran\c{c}ois Le~Gall.
\newblock Stochastic flows associated to coalescent processes.
\newblock {\em Probab. Theory Related Fields}, 126(2):261--288, 2003.

\bibitem{BerLeGa06}
Jean Bertoin and Jean-Francois Le~Gall.
\newblock Stochastic flows associated to coalescent processes. {III}. {L}imit
  theorems.
\newblock {\em Illinois J. Math.}, 50(1-4):147--181, 2006.

\bibitem{BB}
Matthias Birkner and Jochen Blath.
\newblock Measure-valued diffusions, general coalescents and population genetic
  inference.
\newblock In {\em Trends in stochastic analysis}, volume 353 of {\em London
  Math. Soc. Lecture Note Ser.}, pages 329--363. Cambridge Univ. Press,
  Cambridge, 2009.

\bibitem{MJM}
Matthias Birkner, Jochen Blath, Martin M\"{o}hle, Matthias Steinr\"{u}cken, and
  Johanna Tams.
\newblock A modified lookdown construction for the {X}i-{F}leming-{V}iot
  process with mutation and populations with recurrent bottlenecks.
\newblock {\em ALEA Lat. Am. J. Probab. Math. Stat.}, 6:25--61, 2009.

\bibitem{DIP}
D.~A. Dawson, I.~Iscoe, and E.~A. Perkins.
\newblock Super-{B}rownian motion: path properties and hitting probabilities.
\newblock {\em Probab. Theory Related Fields}, 83(1-2):135--205, 1989.

\bibitem{Dawson93}
Donald~A. Dawson.
\newblock Measure-valued {M}arkov processes.
\newblock In {\em \'{E}cole d'\'{E}t\'{e} de {P}robabilit\'{e}s de
  {S}aint-{F}lour {XXI}---1991}, volume 1541 of {\em Lecture Notes in Math.},
  pages 1--260. Springer, Berlin, 1993.

\bibitem{Dawson}
Donald~A. Dawson and Kenneth~J. Hochberg.
\newblock Wandering random measures in the {F}leming-{V}iot model.
\newblock {\em Ann. Probab.}, 10(3):554--580, 1982.

\bibitem{Da94}
Donald~A. Dawson and Vladimir Vinogradov.
\newblock Almost-sure path properties of {$(2,d,\beta)$}-superprocesses.
\newblock {\em Stochastic Process. Appl.}, 51(2):221--258, 1994.

\bibitem{DL98}
Jean-St\'{e}phane Dhersin and Jean-Fran\c{c}ois Le~Gall.
\newblock Kolmogorov's test for super-{B}rownian motion.
\newblock {\em Ann. Probab.}, 26(3):1041--1056, 1998.

\bibitem{DK96}
Peter Donnelly and Thomas~G. Kurtz.
\newblock A countable representation of the {F}leming-{V}iot measure-valued
  diffusion.
\newblock {\em Ann. Probab.}, 24(2):698--742, 1996.

\bibitem{DK99a}
Peter Donnelly and Thomas~G. Kurtz.
\newblock Genealogical processes for {F}leming-{V}iot models with selection and
  recombination.
\newblock {\em Ann. Appl. Probab.}, 9(4):1091--1148, 1999.

\bibitem{DK99b}
Peter Donnelly and Thomas~G. Kurtz.
\newblock Particle representations for measure-valued population models.
\newblock {\em Ann. Probab.}, 27(1):166--205, 1999.

\bibitem{Eth12}
Alison Etheridge.
\newblock {\em Some mathematical models from population genetics}, volume 2012
  of {\em Lecture Notes in Mathematics}.
\newblock Springer, Heidelberg, 2011.
\newblock Lectures from the 39th Probability Summer School held in Saint-Flour,
  2009, \'{E}cole d'\'{E}t\'{e} de Probabilit\'{e}s de Saint-Flour.
  [Saint-Flour Probability Summer School].

\bibitem{EK19}
Alison~M. Etheridge and Thomas~G. Kurtz.
\newblock Genealogical constructions of population models.
\newblock {\em Ann. Probab.}, 47(4):1827--1910, 2019.

\bibitem{EtKu93}
S.~N. Ethier and Thomas~G. Kurtz.
\newblock Fleming-{V}iot processes in population genetics.
\newblock {\em SIAM J. Control Optim.}, 31(2):345--386, 1993.

\bibitem{HZ22}
Thomas Hughes and Xiaowen Zhou.
\newblock Instantaneous support propagation for {$\Lambda$}-{F}leming-{V}iot
  processes.
\newblock {\em Stochastic Process. Appl.}, 155:535--560, 2023.


\bibitem{Li}
Zenghu Li.
\newblock {\em Measure-valued branching {M}arkov processes}.
\newblock Probability and its Applications (New York). Springer, Heidelberg,
  2011.

\bibitem{LZ1}
Huili Liu and Xiaowen Zhou.
\newblock The compact support property for the {$\Lambda$}-{F}leming-{V}iot
  process with underlying {B}rownian motion.
\newblock {\em Electron. J. Probab.}, 17:no. 73, 20, 2012.

\bibitem{LZ2}
Huili Liu and Xiaowen Zhou.
\newblock Some support properties for a class of {$\Lambda$}-{F}leming-{V}iot
  processes.
\newblock {\em Ann. Inst. Henri Poincar\'{e} Probab. Stat.}, 51(3):1076--1101,
  2015.

\bibitem{MP}
Peter M\"orters and Yuval Peres.
\newblock {\em {B}rownian motion}.
\newblock Cambridge Series in Statistical and Probabilistic Mathematics, 30.
  Cambridge University Press, Cambridge, 2010.

\bibitem{perkins}
Edwin Perkins.
\newblock Dawson-{W}atanabe superprocesses and measure-valued diffusions.
\newblock In {\em Lectures on probability theory and statistics
  ({S}aint-{F}lour, 1999)}, volume 1781 of {\em Lecture Notes in Math.}, pages
  125--324. Springer, Berlin, 2002.

\bibitem{Pitman99}
Jim Pitman.
\newblock Coalescents with multiple collisions.
\newblock {\em Ann. Probab.}, 27(4):1870--1902, 1999.

\bibitem{Rei}
M.~Reimers.
\newblock A new result on the support of the {F}leming-{V}iot process, proved
  by nonstandard construction.
\newblock {\em Stochastics Stochastics Rep.}, 44(3-4):213--223, 1993.

\bibitem{Sag99}
Serik Sagitov.
\newblock The general coalescent with asynchronous mergers of ancestral lines.
\newblock {\em J. Appl. Probab.}, 36(4):1116--1125, 1999.

\bibitem{Jason2000}
Jason Schweinsberg.
\newblock A necessary and sufficient condition for the {$\Lambda$}-coalescent
  to come down from infinity.
\newblock {\em Electron. Comm. Probab.}, 5:1--11, 2000.

\bibitem{Tri89}
Roger~Philip Tribe.
\newblock {\em Path properties of superprocesses}.
\newblock ProQuest LLC, Ann Arbor, MI, 1989.
\newblock Thesis (Ph.D.)--The University of British Columbia (Canada).

\end{thebibliography}

\end{document}